\newtheorem{thm}{Theorem}[section]
\newtheorem{prop}[thm]{Proposition}
\newtheorem{lem}[thm]{Lemma}
\newtheorem{cor}[thm]{Corollary}
\newtheorem{conj}[thm]{Conjecture}
\numberwithin{equation}{section}
\theoremstyle{definition}
\newtheorem{definition}[thm]{Definition}
\newtheorem{remark}[thm]{Remark}
\newtheorem{exa}[thm]{Example}
\newcommand{\Pic}{{\rm Pic}}
\newcommand{\kc}{{\mathcal C}}
\newcommand{\kh}{{\mathcal H}}
\newcommand{\kl}{{\mathcal L}}
\newcommand{\ko}{{\mathcal O}}
\newcommand{\kx}{{\mathcal X}}
\newcommand{\ZZ}{\mathbb{Z}}
\newcommand{\QQ}{\mathbb{Q}}
\newcommand{\RR}{\mathbb{R}}
\newcommand{\CC}{\mathbb{C}}
\newcommand{\FF}{\mathbb{F}}
\newcommand{\PP}{\mathbb{P}}
\newcommand{\Ch}{{\rm CH}}
\renewcommand{\to}{\xymatrix@1@=15pt{\ar[r]&}}
\renewcommand{\mapsto}{\xymatrix@1@=15pt{\ar@{|->}[r]&}}
\renewcommand{\twoheadrightarrow}{\xymatrix@1@=15pt{\ar@{->>}[r]&}}
\renewcommand{\hookrightarrow}{\xymatrix@1@=15pt{\ar@{^(->}[r]&}}
\newcommand{\congpf}{\xymatrix@1@=15pt{\ar[r]^-\sim&}}
\renewcommand{\cong}{\simeq}
\begin{document}

\title{Curves and cycles on K3 surfaces}
\author[D.\ Huybrechts]{D.\ Huybrechts\\ with an appendix by C.\ Voisin}


\begin{abstract} \noindent
  The notion of constant cycle curves on K3 surfaces is introduced. These are curves that do not
 contribute to the Chow group of the ambient K3 surface. Rational curves are the most prominent examples.
 We show that constant cycle curves behave in some respects like rational curves. E.g.\ using Hodge theory
 one finds that in each linear system there are at most finitely many such curves of bounded order.
 Over finite fields, any curve is expected to be a constant cycle curve, whereas over $\bar\QQ$
 this does not hold. The relation to the Bloch--Beilinson conjectures for K3 surfaces over global fields
 is discussed.
 \vspace{-2mm}
\end{abstract}

\maketitle
{\let\thefootnote\relax\footnotetext{This work was supported by the SFB/TR 45 `Periods,
Moduli Spaces and Arithmetic of Algebraic Varieties' of the DFG
(German Research Foundation).}
}

\tableofcontents

\newpage
\section{Introduction}

\subsection{} Due to results of Mumford \cite{Mumford} and Bloch \cite{Bloch}, the Chow group of zero-cycles
$\Ch_0(X)=\Ch^2(X)$ on a complex K3 surface $X$ is
known to be huge (infinite dimensional in some well-defined sense). In particular, there is no curve $C\subset X$ such that
the natural push-forward map
\begin{equation}\label{eqn:push0}
\Pic(C)\cong\Ch_0(C)\to\Ch^2(X)
\end{equation}
 is surjective. This paper studies  curves
$C\subset X$ for which the image of $\Pic(C)\to\Ch^2(X)$ is as small as possible, i.e.\
for which (\ref{eqn:push0}) induces a trivial map $\Pic^0(C)\to \Ch^2(X)$.

For a K3 surface over an algebraically closed field $k$, we define an integral curve $C\subset X$ to be a \emph{constant cycle curve} if 
the class of 
its generic point $\eta_C\in C$ viewed as a closed point in $X_{k(\eta_C)}=X\times_kk(\eta_C)$ satisfies 
\begin{equation}\label{eqn:master}
n\cdot[\eta_C]=n\cdot(c_X)_{k(\eta_C)}
\end{equation}
in $\Ch^2(X\times_k{k(\eta_C)})$ for some positive integer $n$. Here, $c_X\in\Ch^2(X)$ is the distinguished class of degree one introduced
by Beauville and Voisin in \cite{BV}. The minimal such $n$ is called the \emph{order} of the constant cycle curve.
If $C\subset X$ is a constant cycle curve, then $\Pic^0(C)\to{\rm CH}^2(X)$ will be shown to be indeed the zero map.
See Sections \ref{sec:pccc} and \ref{sec:defkappa} for the definitions and Proposition \ref{prop:Voisin} for the relation between the two notions.

Finding sufficient criteria that decide whether a given curve is a constant cycle curve seems as hard as finding criteria that
would ensure the opposite. The only positive criterion at the time being is that any rational curve is a constant cycle curve 
(of order one) and the only effective method to 
exclude a given curve from being a constant cycle curve uses Hodge theory (cf.\ Corollary \ref{cor:kappaAJ}).

\subsection{} Some of the results proved in this article are:
\smallskip

$\bullet$ \emph{There are at most finitely many constant cycle curves of bounded order in each linear system $|L|$ on a K3 surface
$X$ in characteristic zero (cf.\ Proposition \ref{prop:finite}).}
\smallskip

The enumerative problem that suggests itself at this point remains largely open, but see Section \ref{sec:counting}.

\smallskip

To get a better idea of the notion of a constant cycle curve, we give many concrete examples with an emphasis
on  curves of low order and high genus, see Sections \ref{sec:exas}--\ref{sec:bitangentcorr}. 
Apart from curves of torsion points in families of elliptic curves, the following result
turns out to be useful.

\smallskip

$\bullet$ \emph{Every fixed curve of a non-symplectic automorphism is a constant cycle curve (see Proposition \ref{prop:fixedcurve}).}

\smallskip

This immediately leads to:

\smallskip

$\bullet$ \emph{There are constant cycle curves of order one, that are not rational (see Corollary \ref{cor:cccorderone}).}

\smallskip

We also manage to construct a constant cycle curve in the generic quartic hypersurface $X\subset\PP^3$
that is of order at most four and genus $201$, see Proposition \ref{cor:cccbit}.

The notion of constant cycle curves makes sense for arbitrary surfaces. In Section \ref{sec:BlochConj} we briefly study surfaces with $p_g=0$ satisfying Bloch's conjecture. It can easily be shown that
curves on such surfaces are all constant cycle curves of bounded order and 

\smallskip

$\bullet$ \emph{On an Enriques surface all curves are constant cycle curves of order at most four (see Proposition \ref{prop:ccconrat}).}

\smallskip

In Section \ref{sec:finitefields} we discuss K3 surfaces over finite fields  and prove:

\smallskip

$\bullet$ \emph{For a Kummer surface $X$ over $\bar\FF_p$ every curve in $X$ is a constant cycle curve (cf.\ Proposition
\ref{cor:allccconKummer}).}

\smallskip

This is expected to hold for all K3 surfaces over $\bar\FF_p$ and is related to the conjectured
finite-dimensionality in the sense of Kimura--O'Sullivan and the Bloch--Beilinson conjecture for K3 surfaces over
global fields in positive characteristic (see Proposition \ref{prop:BBCff}). For arbitrary K3 surfaces
over $\bar\FF_p$ one can at least prove the following:

\smallskip

$\bullet$ \emph{Let $X$ be a K3 surface over $\bar\FF_p$. Then every closed point $x\in X$
is contained in a constant cycle curve (see Proposition \ref{prop:cccallover}).}

\smallskip

This is expected to hold as well for $X$ over $\bar\QQ$ and would imply the Bloch--Beilinson conjecture
for $X$. For arbitrary K3 surfaces the existence is expected for points rationally equivalent to points on rational curves.
The result should also be compared with \cite[Thm.\ 4.2]{BT2}, where it is shown that every point in a Kummer surface over $\bar\FF_p$
associated to the Jacobian of a curve of genus two  is contained in a rational curve (and hence in a constant cycle curve of order one).

\subsection{} {\it Acknowledgements:} 
I am grateful to Claire Voisin for  many comments, in particular
on the torsion problem in the Bloch--Srinivas argument, and for the example in Section \ref{sec:infinell}.
Thanks to Burt Totaro for a question that triggered the results in Section
\ref{sec:finitefields} and for detailed comments on the first version,
to Rahul Pandharipande for bringing \cite{GG} to my attention,
and to Fran\c{c}ois Charles and
Davesh Maulik for help with arguments related to Section \ref{sec:bitangentcorr}. 
Jimmy Dillies and Alessandra Sarti  patiently  answered my  emails concerning Section \ref{sec:auto} and
Kieran O'Grady  commented on the content of Section \ref{sec:bitangentcorr}.
Suggestions of the referee have helped to improve the exposition.
The intellectual debt to the foundational work of Mark Green, Phillip Griffiths, and Claire Voisin is gratefully acknowledged.

\section{Motivation}

We shall try to motivate the study of constant cycle curves from two perspectives: rational curves and Chow groups. 
For simplicity we shall restrict to K3 surfaces over algebraically closed fields $k$ with ${\rm char}(k)=0$ and in fact $k=\bar\QQ$ or $\CC$.
For technical details, in particular concerning the case ${\rm char}(k)>0$, see the later sections.

\subsection{}\label{sec:BMMM} Let  $(X,H)$ be a polarized complex K3 surface. The following  folklore conjecture
has been studied intensively over the last couple of years.

\begin{conj}\label{conj:dense} 
The union $\bigcup C\subset X$ of all rational curves $C\subset X$ is dense (in the Zariski or, stronger, in the classical topology).
\end{conj}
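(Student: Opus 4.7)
The plan is to reduce density to a pure existence statement. Since any proper Zariski-closed subset of $X$ contains only finitely many irreducible curves of bounded $H$-degree for a fixed polarization, Zariski density follows as soon as one produces rational curves $C_n\subset X$ with $(H.C_n)\to\infty$, e.g.\ a rational member of $|nH|$ for infinitely many $n$. Density in the classical topology is strictly stronger, but the same unbounded existence input is the decisive ingredient and must in any case be supplemented by a spreading argument inside each such linear system.

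The natural attack is specialization from characteristic $p$. One spreads $X$ out over a ring of integers $\ko_K$ and reduces modulo a prime $\mathfrak{p}$ of good reduction; over $\bar\FF_p$ one expects (and knows in many cases) every K3 surface to be covered by rational curves, produced by iterates of Frobenius and by the stratification of the moduli space in positive characteristic. Choosing $n$ so that the expected dimension of the family of rational curves in $|nH|$ on the universal polarized K3 surface is zero, any rational member over $\bar\FF_\mathfrak{p}$ should force, by constancy of the relative Hilbert scheme across the geometric generic fiber, a rational representative of $|nH|$ over $\bar\QQ$ and hence over $\CC$. Ranging $n$ through an infinite set then produces the unbounded sequence required.

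The main obstacle is precisely this final lifting step: rationality is not a deformation-invariant property, and the locus of rational curves inside a component of a relative Hilbert scheme can drop in dimension under specialization, potentially being empty on the generic fiber. Controlling this requires fine information about components of the Hilbert scheme of nodal curves in $|nH|$ on the universal K3 and about which of them carry rational representatives over $\bar\FF_p$; this is where every existing partial result in fact stops. A complementary line of attack exploits elliptic fibrations, where torsion multisections produce an explicit countable supply of rational curves, combined with a Noether--Lefschetz-type degeneration of a general K3 to an elliptic one, but transporting rational curves along such a degeneration runs into essentially the same deformation problem.
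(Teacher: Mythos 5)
The statement you were asked to prove is stated in the paper as Conjecture \ref{conj:dense} and is \emph{not} proved there: it is the folklore density conjecture for rational curves on K3 surfaces, which remains open in general. The paper only records the known partial results (Bogomolov--Mumford/Mori--Mukai existence of a rational curve in every $|mH|$; Li--Liedtke and Bogomolov--Hassett--Tschinkel for $\rho(X)\equiv 1\,(2)$ or for $X$ not defined over $\bar\QQ$; Chen--Lewis for the classical topology for general $(X,H)$), and the rest of the paper is devoted to a weaker, more flexible notion (constant cycle curves) precisely because the rational-curve statement is out of reach.

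Your proposal is accordingly not a proof, and to your credit you say so yourself. The reduction of Zariski density to producing \emph{integral} rational curves $C_n$ with $(H.C_n)\to\infty$ is correct, since infinitely many distinct integral curves cannot lie in a proper Zariski-closed subset of a surface; and the reduction-mod-$p$ strategy you outline is exactly the Bogomolov--Hassett--Tschinkel/Li--Liedtke attack cited in the paper. But the step you flag as the main obstacle---lifting a rational curve from $\bar\FF_p$ back to characteristic zero---is a genuine gap, not a technicality: rationality is not deformation-invariant, and the known arguments close this gap only under extra hypotheses (odd Picard rank, or $X$ not defined over $\bar\QQ$), essentially by using Picard-lattice constraints to force the lifted curve to stay irreducible, whence its geometric genus cannot go up. Without such a hypothesis nobody knows how to rule out that every rational member of $|nH|$ over $\bar\FF_{\mathfrak p}$ deforms to a reducible or non-rational member over $\bar\QQ$. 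Note also that the mere existence of a rational member of $|nH|$ for every $n$ (which is known) does not by itself give density, since these members may re-use the same finitely many integral components; the integrality and unboundedness you require is exactly the hard part. The statement should therefore be treated as a conjecture, not as something your argument (or the paper) establishes.
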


The stronger and less studied version would only allow integral rational curves linearly equi\-valent
to some multiple of the given polarization $H$.

The motivation for this conjecture stems from the classical result that for all $m>0$ there exists a rational curve $C$ in 
the linear system $|mH|$ (Bogomolov, Mumford, Mori--Mukai \cite{MM}). Here, a curve is rational if the reduction
of each of its components has a normalization $\cong\PP^1$.
For fixed $m>0$ and \emph{generic} $(X,H)$, i.e.\ for polarized complex K3 surfaces
in a Zariski dense open subset of the moduli space of polarized K3 surfaces, $C$ can be chosen integral and
even nodal \cite{Chen}. Note that at the same time there are at most finitely many rational curves in any fixed linear system, e.g.\ in $|mH|$, as K3 surfaces in characteristic zero are not unirational.

More recently, the conjecture (for the Zariski topology) has been verified for K3 surfaces with $\rho(X)\equiv1\,(2)$ by
Li and Liedtke \cite{LL} following an approach by Bogomolov--Hassett--Tschinkel \cite{BHT}. The same ideas 
 also apply to K3 surfaces that are not defined over $\bar\QQ$ (see \cite{HK3} for details). Note that both conditions,
`$\rho(X)\equiv\,1(2)$' and `not defined over $\bar\QQ$', are \emph{general} but not generic, i.e.\ they hold for K3 surfaces
in the complement of a countable union of proper algebraic subsets of the moduli space of polarized K3 surfaces.
Using work of Bogomolov and Tschinkel \cite{BT}, Chen and Lewis \cite{CL} settled the conjecture in the classical topology for
general $(X,H)$.
Not much is known about the stronger form of the conjecture, except for $\rho(X)=1$ (when the rational curves have no other choice than being linearly equivalent to multiples of $H$).

\subsection{}\label{sec:IntroBV} Assume $X$ is a K3 surface over $\CC$ (or $\bar\QQ$).
In \cite{BV} Beauville and Voisin described a distinguished class $c_X\in\Ch^2(X)$ of degree one, which in particular
has the properties that 
\begin{equation}\label{eqn:BV}
{\rm c}_2(X)=24\cdot c_X\text{ and }{\rm c}_1(L)^2\in\ZZ \cdot c_X
\end{equation}
for all line bundles $L$ on $X$.
The set of closed points realizing this class was subsequently studied by McLean in \cite{ML}, 
where it is shown that the set
\begin{equation}\label{eqn:McLean}
X_{c_X}:=\{x\in X~|~[x]=c_X\in\Ch^2(X)\}\subset X
\end{equation}
is dense in the classical topology.
Similarly, one can consider the set $X_\alpha$ of points realizing any given class $\alpha\in\Ch^2(X)$ and again, at least for generic
K3 surface, this set is dense if not empty. However, as shall become clear, the set $X_{c_X}$ is rather special. For abstract reasons, it is a countable union of Zariski closed subsets, but one expects it to be a countable union of curves, i.e.\ isolated points should not occur. This 
would be another distinction between $c_X$ and any other class $\alpha\in\Ch^2(X)$
(cf.\  \cite{VoisinHOG}). See also Section \ref{McLeanunderbasechange} for more on the sets $X_{[x]}$.

\smallskip

The distinguished class $c_X$ can also be considered
from a more arithmetic point of view, as expressed by the following special case of the much more
general set of conjectures due to Bloch and Beilinson.
But note that  even for K3 surfaces, it has not been verified in a single example.

\begin{conj}\label{conj:BBC}(Bloch--Beilinson) Suppose $X$ is a K3 surface over $\bar \QQ$ and $x\in X(\bar\QQ)$ is a $\bar\QQ$-rational point.
Then $[x]=c_X$.
\end{conj}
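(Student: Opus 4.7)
The statement is posed as a conjecture, and the paper warns that it has not been verified in any single example, so any proof outline is speculative. Nevertheless, the natural strategy suggested by the machinery developed here is as follows. The plan is to realize every $\bar\QQ$-point $x\in X(\bar\QQ)$ as a point on a constant cycle curve $C\subset X$, and then connect $C$ (via a chain of constant cycle curves, or via their intersection pattern) to a curve on which a point of class $c_X$ already lives. Since the paper shows that $\Pic^0(C)\to\Ch^2(X)$ is the zero map for any constant cycle curve, two $\bar\QQ$-rational points on the same connected constant cycle curve have equal classes in $\Ch^2(X)$. Thus propagating the class $c_X$ along such a connected chain would force $[x]=c_X$.

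First I would try to establish the existence statement: every closed point of $X_{\bar\QQ}$ lies on some constant cycle curve. Over $\bar\FF_p$ this is proved in the paper (Proposition \ref{prop:cccallover}), and the hope is that an analogous construction (e.g.\ using the McLean-type set $X_{c_X}$, families of curves of torsion points in elliptic fibrations, or fixed loci of non-symplectic automorphisms on twists) can be carried out arithmetically. Concretely, one could look for a one-parameter family of constant cycle curves through $x$ by deforming a rational curve through $x$ in a larger linear system and specializing; the finiteness result (Proposition \ref{prop:finite}) forces such deformations to acquire new constant cycle members rarely, which is exactly what one wants if the collection of constant cycle curves is to cover $X(\bar\QQ)$.

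Second, granting such a $C$ through $x$, I would try to ensure $C$ meets either a rational curve (whose points all have class $c_X$) or the Beauville--Voisin locus $X_{c_X}$. For curves in sufficiently ample linear systems, intersection with a rational curve in $|mH|$ (whose existence follows from Bogomolov--Mumford--Mori--Mukai) is automatic; if such an intersection point $y$ lies in $C(\bar\QQ)$ or can be spread out arithmetically, then $[y]=c_X$ and the propagation argument in $\Pic^0$ yields $[x]=[y]=c_X$. Some care is needed to arrange that the intersection point is defined over $\bar\QQ$, but here one is free to replace $y$ by the $0$-cycle $C\cdot C'$ divided by its degree, rendering the argument purely in $\Ch^2$.

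The main obstacle, almost certainly insurmountable by present methods, is the first step: producing a constant cycle curve through an arbitrary $\bar\QQ$-point. Even the analogous statement over $\bar\FF_p$ uses genuinely positive-characteristic input (Frobenius, supersingularity of ample divisors), and no such mechanism is known over $\bar\QQ$. An alternative route via the Bloch--Srinivas decomposition of the diagonal would immediately deduce the conjecture from finite-dimensionality of the motive of $X$ in the sense of Kimura--O'Sullivan, but the latter is equally open for K3 surfaces, and moreover, as Voisin points out in the acknowledgements, the torsion in the argument is itself delicate. So in practice I would not expect a proof, but only to reduce the conjecture to Kimura--O'Sullivan finite-dimensionality plus the expected density of constant cycle curves through arithmetic points.
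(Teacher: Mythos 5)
The statement is a conjecture, not a theorem: the paper offers no proof and explicitly notes it has not been verified in a single example, and you correctly recognize this. Your speculative strategy coincides with the paper's own discussion --- Bogomolov's question about rational curves through $\bar\QQ$-points, its weakening to constant cycle curves (Conjecture \ref{conj:BBCgeom}), the finite-field analogue (Proposition \ref{prop:cccallover}), and the reduction via Kimura--O'Sullivan finite-dimensionality --- so there is nothing to compare against. One small simplification: your second step (connecting $C$ to a rational curve or to $X_{c_X}$) is unnecessary, since by the paper's normalization (Voisin's result cited after Proposition \ref{prop:Voisin}) every point of a constant cycle curve over an algebraically closed field already realizes the class $c_X$ itself, not merely some common class.
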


Note that McLean's proof in fact shows that for every K3 surface $X$ defined over some subfield $k\subset \CC$ the set
of $\bar k$-rational points realizing $c_X$ is dense in the classical topology, i.e.\ $X_{c_X}(\bar k)\subset X(\CC)$ is dense.
In particular, it is known that for $X$ over $\bar \QQ$ there are many points $x\in X(\bar\QQ)$ realizing $c_X$.
\smallskip

The property of $c_X$ proved in \cite{BV} that is the most relevant for our purpose, is the following: 

\begin{equation}\label{eqn:BVmostrelevant}
\text{ If }x\in C\subset X\text{  with } C \text{ rational, then } [x]=c_X.
\end{equation}
This links rational curves to the study of $\Ch^2(X)$ and the distinguished class $c_X\in\Ch^2(X)$. In particular,
McLean's density result could be seen as (weak) evidence for the density for rational
curves as in Conjecture \ref{conj:dense}. Also, as pointed out by Bogomolov many
years ago, it might a priori be possible that any $\bar\QQ$-rational point lies on a rational curve which in turn would prove
Conjecture \ref{conj:BBC}.

The fact that points on rational curves all define the same class in $\Ch^2(X)$, which eventually relies on the existence
of ample rational curves \`a la Bogomolov--Mumford \cite{MM}, also leads to the concept of {constant cycle curves} studied 
in this paper.
 
\subsection{}
Let $C\subset X$ be a curve in a complex K3 surface. Then $C$ is called a \emph{(pointwise)
constant cycle curve} 
if $[x]\in\Ch^2(X)$ is constant for points $x\in C$ or, equivalently, if the push-forward $\Pic^0(C)\to\Ch^2(X)$ is the zero map. 
Voisin shows in \cite{VoisinHOG}, by again using the existence of ample rational curves, that the class realized by a constant cycle curve is always the same, namely $c_X$.
The most important examples of constant cycle curves are provided by rational curves. But not every 
(pointwise) constant cycle curve is rational, see Section \ref{sec:exas}-\ref{sec:bitangentcorr} for examples.
This triggers the natural question how much weaker the notion of constant cycle curves really is.

As rational curves, constant cycle curves do not come in families (at least not in characteristic zero). Indeed, any family of constant cycle curves
would dominate $X$ and so points in an open dense subset would all
realize the same class in $\Ch^2(X)$ contradicting $\Ch^2(X)\ne\ZZ$ (cf.\ proof of Lemma \ref{lem:ccc}, ii)).
 Hence, for abstract reasons, the set of constant cycle curves in a fixed linear system, e.g.\ in $|mH|$, consists of at most countably many points. A finiteness result as for rational curves can be proved after restricting to constant cycle curves of bounded order
(see Proposition \ref{prop:finite}).
This result is based on normal functions and the recent results of Brosnan--Pearlstein \cite{BP}
and Saito \cite{Saito} showing that the zero-set of admissible normal
functions is algebraic. 

Alternatively to the definition of the order of a constant cycle curve using (\ref{eqn:master}) one
could define it directly as the order  of  a certain class
 $\kappa_C\in\Ch^2(X\times{k(\eta_C)})$ naturally associated
 to any integral curve $C\subset X$ with    its generic point $\eta_C\in C$ (see Section \ref{sec:defkappa}).
 Note that $\Ch^2(X)$ is torsion free for $X$ over a separably closed field and so the subtle information needed for the finiteness is contained
in the kernel of $\Ch^2(X\times{k(\eta_C)})\to\Ch^2(X\times{\overline{k(\eta_C)}})$. 

It is not difficult to show that rational curves are in fact constant cycle curves of order one (see Lemma \ref{sec:ratcurves}). Also,
non-rational constant cycle curves can be constructed in many ways, but they usually tend to be of higher order,
i.e.\ $\kappa_C\ne0$ in $\Ch^2(X\times{k(\eta_C)})$. So it is natural to wonder whether
constant cycle curves of order one are all rational, but this turns out to be wrong and  an explicit counterexample will be described
(see Corollary \ref{cor:cccorderone}).

As constant cycle curves of bounded order resemble  rational curves in many ways, we
state Conjecture \ref{conj:dense} for this more flexible class of curves. Note that if in the following the order is not bounded, the result is not difficult to prove, see \cite{VoisinHOG} or Lemma \ref{lem:ccctorsion}.
\begin{conj}\label{conj:ccc}
For any K3 surface $X$ there exists an $n>0$ such that the union $\bigcup C\subset X$ of all constant cycle curves $C\subset X$ of order $\leq n$ is dense.\footnote{In the appendix Claire Voisin provides a proof of the conjecture for generic complex K3 surfaces. The main idea is to produce constant cycle curves as non-torsion multi-sections of dominating
families of elliptic curves, similar to \cite{BT}.}
\end{conj}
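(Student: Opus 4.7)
The plan, following the strategy sketched in the footnote and inspired by the Bogomolov--Tschinkel construction \cite{BT}, is to realize constant cycle curves as images of multi-sections of a dominating family of elliptic curves on $X$.

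The first step is to produce on $X$ a proper morphism $\pi\colon\mathcal E\to B$ with elliptic generic fibre together with a dominant morphism $q\colon\mathcal E\to X$. For generic complex $X$ with $\rho(X)=1$ this is precisely the geometric input used by Chen--Lewis \cite{CL} (after Bogomolov--Tschinkel \cite{BT}) to deduce density of rational curves in the classical topology. After a suitable base change one may further assume that $\pi$ admits a section $\sigma_0$ whose image $q\circ\sigma_0(B)\subset X$ is a rational curve, hence a constant cycle curve of order one (cf.\ Lemma \ref{sec:ratcurves}).

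Given this setup, any multi-section $\sigma$ of $\pi$ of degree $d$ which is $n$-torsion with respect to $\sigma_0$ in the relative Mordell--Weil group furnishes a constant cycle curve of order dividing $n$: the fibrewise differences $\sigma(b)-d\cdot\sigma_0(b)\in\Pic^0(E_b)$ are $n$-torsion, and their images in $\Ch^2(X_{\bar k})$ vanish because $\Ch^2(X_{\bar k})$ is torsion-free (Roitman). Unfortunately, torsion multi-sections of a \emph{fixed} order supply only finitely many points per fibre of $\pi$ and so cannot sweep out $X$ densely; this is what forces the appeal to \emph{non-torsion} multi-sections advertised in the footnote. The idea is that, starting from a fixed non-torsion section $\tau$ of $\pi$, one assembles a family of multi-sections built from $\tau$ (translating by variable torsion and passing to \'etale covers $B'\to B$) whose images in $X$ form a dense set of constant cycle curves of order bounded by invariants of the family $\mathcal E/B$ alone, the control coming from a direct normal-function / Abel--Jacobi analysis rather than from Mordell--Weil torsion.

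The main obstacle, and the technical heart of the argument, is precisely this uniform bound on the order. The torsion mechanism of the previous step only yields density as the torsion order tends to infinity; the decisive new input is Voisin's decoupling of density from Mordell--Weil torsion by means of non-torsion multi-sections. I expect carrying it out to require a careful study of the image of the relative Abel--Jacobi map $\mathcal E\to J(\mathcal E/B)$ and of the torsion that can survive over the \emph{non-closed} field $k(\eta_C)$---which is where the order of a constant cycle curve genuinely lives, given that $\Ch^2(X_{\bar k})$ itself is torsion-free.
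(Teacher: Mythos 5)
Your outline correctly identifies the strategy that the appendix (due to Voisin) actually follows --- produce constant cycle curves as non-torsion multi-sections of a dominating non-isotrivial family of elliptic curves $p:\kc\to X$, $q:\kc\to T$ --- but it stops short of both of the steps that make the proof work, so as it stands there is a genuine gap rather than a proof. First, the uniform bound on the order. You flag this as the ``technical heart'' and gesture at a ``normal-function / Abel--Jacobi analysis,'' but that is not what is used and it is not clear it could deliver a bound independent of the multiplication parameter. The actual mechanism is a Bloch--Srinivas argument applied \emph{once} to the fibrewise summation correspondence: one considers the cycle $\Gamma:=\Gamma_\sigma-\Gamma_{p\circ pr_1}-\Gamma_{p\circ pr_2}+\{x_0\}\times\kc\times_T\kc$ on $X\times\kc\times_T\kc$, where $\sigma$ is fibrewise addition followed by $p$; Bloch--Srinivas gives a single integer $N$ with $N\cdot[\Gamma]=0$ over a dense open set, and restricting $\Gamma$ to the graphs of ${\rm id}\times\mu_{m-1}$ yields by induction $N\cdot\bigl([\Gamma_m]-m\cdot[\Gamma_1]+(m-1)\cdot[\{x_0\}\times W]\bigr)=0$ for \emph{all} $m$ simultaneously. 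This is what makes every multiplication-by-$m$ image $C(m)=p(\mu_m(D))$ a constant cycle curve of order at most $N\cdot N'$ with $N$, $N'$ independent of $m$; applying Bloch--Srinivas separately for each $m$ (which is what a naive version of your argument would do) gives constants $N_m$ with no a priori bound.

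Second, and more seriously, you need a seed multi-section $D$ that is \emph{simultaneously} non-torsion in the generic fibre \emph{and} has image $p(D)$ a constant cycle curve, and your proposal does not produce one. A generic non-torsion section $\tau$ has no reason to map to a constant cycle curve, and translating $\tau$ by torsion does not change the classes $[p(\tau(t))]\in\Ch^2(X)$ (since $[p(x+x')]=[p(x)]+[p(x')]-c_X$ and torsion points map to $c_X$), so it cannot repair this. In the appendix the seed is found among the components of $p^{-1}(p(\kc_n))$ \emph{other} than the torsion curve $\kc_n$ itself: these automatically map onto the constant cycle curve $C_n=p(\kc_n)$, and the claim that at least one of them is fibrewise non-torsion is proved by contradiction (hypothesis $(\ast)$) via a separate argument involving flat local sections of $R^1q_*\CC$, the transversality hypothesis that two members of the family meet transversally at a generic point of $X$, and the non-isotriviality of $q$ (finite-index monodromy in ${\rm Sl}(2,\ZZ)$). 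The same monodromy input is also needed at the very end to upgrade Zariski density of $\bigcup C(m)$ to density in the classical topology, by ruling out that the closure of $\bigcup\mu_m(D)\cap\kc_t$ is a finite union of translated circles in $\RR^2/\ZZ^2$. Until you supply substitutes for these two ingredients, the proposal is a correct road map but not a proof.
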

Once the finiteness of constant cycle curves of bounded order has been established, it would be interesting
to actually count them. Counting rational curves on K3 surfaces is a fascinating subject which
recently culminated in the proof of the Yau--Zaslow conjecture in complete gene\-ra\-lity in \cite{KMPS}. Unfortunately, it seems much harder  to count constant cycle curves, see Section \ref{sec:counting}. 

There is little evidence for an affirmative answer to Bogomolov's question whether maybe any $x\in X(\bar\QQ)$
is contained in a rational curve. Again, one could replace rational curves by constant cycle curves and
an affirmative answer to this weaker form would still imply the arithmetic Conjecture \ref{conj:BBC}. 
The following could be seen as a geometric version.

\begin{conj}\label{conj:BBCgeom}
Let $X$ be a complex K3 surface. Then any point $x\in X$ with $[x]=c_X$ is contained in a constant
cycle curve.
\end{conj}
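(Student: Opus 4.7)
I would begin by reformulating the conjecture in set-theoretic terms. By Voisin's result (quoted in Section \ref{sec:IntroBV}), every constant cycle curve is contained in $X_{c_X}=\{y\in X\mid [y]=c_X\}$, and conversely any integral curve contained in $X_{c_X}$ is a constant cycle curve. Hence Conjecture \ref{conj:BBCgeom} is equivalent to the assertion that $X_{c_X}$ has no isolated points: since $X_{c_X}$ is a countable union of Zariski-closed subsets (cf.\ (\ref{eqn:McLean})) and is dense in the classical topology by McLean, the claim is that the $1$-dimensional part of $X_{c_X}$ already accounts for all of $X_{c_X}$ set-theoretically.

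The natural strategy is then to exploit normal functions together with the algebraicity results of Brosnan--Pearlstein \cite{BP} and Saito \cite{Saito}. Fix a point $x\in X$ with $[x]=c_X$ and a very ample $L$ with $L^2\gg0$. Let $U\subset |L|$ be the open locus of smooth curves, with universal family $\mathcal{C}\to U$. The class $[\eta]-c_X|_{\mathcal{C}}$ (where $\eta$ is a choice of quasi-section) defines an admissible normal function $\nu_L$ on $U$, whose zero locus parametrises precisely the constant cycle curves in $|L|$; more generally, the locus $Z_n\subset U$ where $n\cdot\nu_L=0$ records constant cycle curves of order at most $n$ (cf.\ Corollary \ref{cor:kappaAJ}). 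By Brosnan--Pearlstein--Saito this $Z_n$ is algebraic, and by Proposition \ref{prop:finite} it is in fact finite. Writing $H_x\subset|L|$ for the hyperplane of curves through $x$, a constant cycle curve of order $\le n$ through $x$ corresponds to a point of $Z_n\cap H_x$. The goal would therefore be to show $Z_n\cap H_x\neq\emptyset$ for some $n=n(L,x)$, and eventually to pass to a limit (or vary $L$) so that the resulting curve actually passes through $x$.

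To do this concretely, I would try to exploit the constructions from Voisin's appendix: for K3 surfaces admitting an elliptic fibration $\pi:X\to\PP^1$, non-torsion multi-sections picking up fibrewise torsion give many constant cycle curves, and one may hope to arrange such a multi-section to pass through a prescribed $x\in X_{c_X}$ by choosing the torsion level in terms of the fibre $\pi^{-1}(\pi(x))$. For a general $X$ one could then try a deformation argument: specialise $X$ to a K3 surface with a suitable elliptic fibration or with a non-symplectic automorphism (using Proposition \ref{prop:fixedcurve}), produce the constant cycle curve through the specialisation of $x$, and attempt to spread it out in the family.

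\textbf{Main obstacle.} The fundamental difficulty is precisely that constant cycle curves do \emph{not} move in families: each $Z_n$ is a finite (in fact zero-dimensional) scheme in $|L|$, so any generic/dimension-count argument forcing $Z_n\cap H_x\neq\emptyset$ is hopeless. One really needs an \emph{explicit} construction producing a constant cycle curve through an arbitrary prescribed point of $X_{c_X}$, and, as noted in the paper just before the statement, an affirmative answer would already imply the Bloch--Beilinson Conjecture \ref{conj:BBC} for $X$ over $\bar\QQ$. A realistic intermediate target would therefore be the elliptically fibred case via the appendix's torsion-multisection method; extending this to arbitrary complex K3 surfaces appears to require genuinely new input, and I would expect the full conjecture to remain open under this plan.
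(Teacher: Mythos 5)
This statement is Conjecture \ref{conj:BBCgeom}; the paper offers no proof of it, only supporting evidence (Corollary \ref{cor:cccthroughpt} as ``weak evidence'', the finite-field analogue in Proposition \ref{prop:cccallover}, and the density statement of Conjecture \ref{conj:ccc} proved in the appendix for generic $X$). Your assessment is therefore accurate: you correctly reformulate the problem (via Proposition \ref{prop:Voisin}, over $\CC$ the conjecture is exactly the statement that $X_{c_X}$ has no isolated points, matching the paper's remark in Section \ref{sec:IntroBV}), you correctly identify why the normal-function machinery cannot force $Z_n\cap H_x\neq\emptyset$ (the $Z_n$ are finite by Proposition \ref{prop:finite}, so there is no room for a dimension count), and you honestly conclude that the conjecture remains open. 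This is consistent with the paper, which explicitly says ``a general technique that would allow to settle this problem is not yet available.''

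One caveat on your proposed intermediate target: even in the elliptically fibred case the torsion-multisection method of Section \ref{sec:BVexas} does not reach an arbitrary $x$ with $[x]=c_X$. If $x$ lies in a smooth fibre $X_t$ with origin $x_t$, the hypothesis $[x]=c_X=[x_t]$ only places $x-x_t$ in the kernel of $\Pic^0(X_t)\to\Ch^2(X)$, which is a countable subgroup generally strictly larger than the torsion subgroup; so $x$ need not lie on any curve $C_n$ of fibrewise $n$-torsion points. This is precisely why Proposition \ref{prop:cccallover} works over $\bar\FF_p$ (where every point of a smooth fibre is automatically torsion) but does not transfer to $\CC$. So even the elliptic case of the conjecture is not a consequence of the constructions in the paper, and your closing expectation that genuinely new input is needed applies there as well.
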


As alluded to before, it is much easier to construct constant cycle curves than rational curves.
E.g.\ for K3 surfaces over finite fields every point is in fact contained in a constant cycle curve
(see Proposition \ref{prop:cccallover}).
However, a general technique that would allow to settle this problem is not yet available.


\section{Constant cycle curves}\label{sec:deficcc}

\subsection{}\label{sec:pccc}
Let $X$ be a projective K3 surface over a field $k$. For two reasons, one often has
to assume that  $k$  is algebraically closed. Firstly, $\Ch^2(X)$ might have torsion otherwise
and, secondly, the good behavior of the Beauville--Voisin class $c_X$ 
depends on the existence of rational curves for which $k$ algebraically closed is needed.
In fact, for $k$ not algebraically closed, $c_X\in\Ch^2(X)$ has the desired properties (e.g.\
being realized by points on rational curves) only up to torsion. 
We will state explicitly when $k=\bar k$ is assumed.

\begin{definition}
A curve $C\subset X$ is a \emph{pointwise constant cycle curve} if all closed points $x\in C$ 
define the same class $[x]\in\Ch^2(X)$.
\end{definition}

For $k=\bar k$ the condition is equivalent to require
$[x]=c_X$ (the Beauville--Voisin class, see Section \ref{sec:IntroBV})
for all closed points $x\in C$, see \cite[Lem.\ 2.2]{VoisinHOG}.\footnote{We tacitly assume that the
standard facts on $c_X$ hold true for K3 surfaces over algebraically closed fields of positive characteristic, but 
all we really need is that points on rational curves all realize the same class. This follows
from the existence of ample rational curves which can be shown by reduction modulo $p$.}
Another way of expressing the condition (still assuming $k=\bar k$) is as follows. A
curve $C\subset X$ is a pointwise constant cycle curve if
and only if the natural map 
\begin{equation}\label{eqn:push}f_{C*}:\Pic(\widetilde C)\to \Ch^2(X)
\end{equation}
takes image in $\ZZ \cdot c_X$ or, still equivalent, that 
\begin{equation}\label{eqn:push2}f_{C*}:
\Pic^0(\widetilde C)\to\Ch^2(X)
\end{equation}
is zero. Here,  $f_C:\widetilde C\to X$ is the composition of the normalization
$\widetilde C\to C$  with the inclusion $C\subset X$.

The notion of pointwise constant cycle curves is really interesting only for uncountable fields $k$.
E.g.\ it is not preserved under base change when the base field $k$ is too small
(cf.\ Lemma \ref{lem:ccc} and Proposition \ref{prop:Voisin}).
Also, according to Conjecture \ref{conj:BBC}, every curve in a K3 surface over $\bar\QQ$ should be a pointwise constant cycle curve which makes it a notion of little interest in this case. However, the same is true
for K3 surfaces over $k=\bar\FF_p$ and in this case it is a shadow of the Bloch--Beilinson conjecture
for function fields, see Proposition \ref{prop:BBCff}.

\subsection{}\label{sec:defkappa}
In order to introduce the finer version of this notion, we  define
the class $\kappa_{C}$ naturally associated to any integral curve  $C\subset X$. To this end,
denote (abusively) by $\Delta_C\subset X\times C$ the graph
of the inclusion and consider the cycle $\Delta_C-\{x_0\}\times C$, where $x_0\in X$ is an arbitrary
point with $[x_0]=c_X$.  Again, for the existence of such a point and for $c_X$ being well-defined, $k$ has to be algebraically
closed. Here, $c_X$ is the Beauville--Voisin class (cf.\ Section \ref{sec:IntroBV}). Then let 
\begin{equation}\label{eqn:kappa}
\kappa_{C}\in\Ch^2(X\times{k(\eta_C)})
\end{equation} be the class of the restriction
$\Delta_C-\{x_0\}\times C$ to the generic fibre $X_{k(\eta_C)}=X\times_kk(\eta_C)$ of the second projection
$X\times C\to C$. In other words, the generic point $\eta_C\in C$  is viewed as a closed point of
the K3 surface $X_{k(\eta_C)}$ over the function field $k(\eta_C)$ of $C$
and then corrected by the `constant' point $\{x_0\}\times \eta_C$.
This natural class has been considered before in the literature, see e.g.\ \cite[Ex.\ 4.2]{Kerr2}.

\begin{definition} Assume $k$ algebraically closed.
An integral curve $C\subset X$ is a \emph{constant cycle curve} if $\kappa_C\in\Ch^2(X\times{k(\eta_C)})$ is a torsion class. 
\end{definition}

We call an arbitrary curve $C\subset X$  a constant cycle curve if every integral component of $C$
has this property. Note that this definition makes perfect sense for all surfaces with a distinguished class in $\Ch^2(X)$, e.g.\
for those with $\Ch^2(X)\cong\ZZ$. In fact constant cycle curves can be defined for arbitrary surfaces by means
of Lemma \ref{lem:BVclass2}, but, with the exception of Section \ref{sec:BlochConj}, we will restrict to K3 surfaces.

\begin{remark} The class $\kappa_C$ is in fact the direct image under the push-forward
$$\Ch^1(C\times k(\eta_C))\to\Ch^2(X\times k(\eta_C))$$ of the class $[\eta_C]-[x_0]$, where
$x_0\in C$ is any point with $[x_0]=c_X$ in $\Ch^2(X)$ (e.g.\ a point of intersection with  a
rational curve). However, the class $[\eta_C]-[x_0]\in\Ch^1(C\times k(\eta_C))$ itself is never torsion except for $C$ rational, because
no non-trivial multiple of $[\eta_C]$ is ever contained in the image of the base change map $\Ch^1(C)\to\Ch^1(C\times k(\eta_C))$.

In fact, whether a non-rational curve $C$ is a constant cycle curve depends on the 
particular embedding $C\,\hookrightarrow X$. We will see examples of (smooth)
curves that can be embedded as constant cycle curves of varying
order and even as non-constant cycle curves in the same K3 surface $X$.
\end{remark}

For any field extension $K/k$ the pull-back yields a map
\begin{equation}\label{eqn:pb2}
\Ch^2(X)\to\Ch^2(X\times_k K).
\end{equation}
The image of the Beauville--Voisin class $c_X\in\Ch^2(X)$
shall be denoted $(c_X)_K$. It can also be seen, at least for $K$ algebraically closed,
as the Beauville--Voisin class of $X_K=X\times_kK$, i.e.\ $(c_X)_K=c_{X_K}$.
Compare the following result to Lemma \ref{lem:BVclass2}.

\begin{lem}\label{lem:BVclass} Let $X$ be a K3 surface over an algebraically closed field $k$.
For an integral curve $C\subset X$ the following conditions are equivalent:

i) The curve $C$ is a constant cycle curve.

ii) There exists a positive integer $n$ such that 
\begin{equation}\label{eqn:minimal}
n\cdot[\eta_C]=n\cdot (c_X)_{k(\eta_C)}
\end{equation}
in $\Ch^2(X\times_k{k(\eta_C)})$, where the generic point
$\eta_C\in C$ is viewed as a closed point in $X\times_k{k(\eta_C)}$. 

iii) If  $\eta_C\in C$ is viewed as a point in
the geometric generic fibre $X\times_k{\overline {k(\eta_C)}}$, then
$$[\eta_C]=(c_X)_{\overline{k(\eta_C)}}$$
in $\Ch^2(X\times_k{\overline {k(\eta_C)}})$. 
\end{lem}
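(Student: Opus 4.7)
My plan is to identify $\kappa_C$ explicitly and then handle each equivalence in turn. First I would restrict the two cycles defining $\kappa_C$ to the generic fibre $X_{k(\eta_C)}$ of the projection $X\times C\to C$: the graph $\Delta_C$ restricts to the $k(\eta_C)$-rational point $\eta_C$ sitting inside $X_{k(\eta_C)}$, while $\{x_0\}\times C$ restricts to the base change of $\{x_0\}$, whose class is $(c_X)_{k(\eta_C)}$ by (\ref{eqn:pb2}). Hence $\kappa_C=[\eta_C]-(c_X)_{k(\eta_C)}$ in $\Ch^2(X_{k(\eta_C)})$, from which (i)$\Leftrightarrow$(ii) is immediate: a class is torsion precisely when some positive multiple of it vanishes.

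For (ii)$\Rightarrow$(iii) I would pull (\ref{eqn:minimal}) back along the flat base change $\Ch^2(X_{k(\eta_C)})\to\Ch^2(X_{\overline{k(\eta_C)}})$, obtaining that $[\eta_C]-(c_X)_{\overline{k(\eta_C)}}$ is $n$-torsion. I would then invoke the fact recalled in Section \ref{sec:pccc} that $\Ch^2$ of a K3 surface over a separably closed field is torsion free, which forces this difference to vanish and gives (iii).

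The main direction is (iii)$\Rightarrow$(ii), and I expect this to be the only part with real content. My plan is to descend the vanishing in the geometric generic fibre to a finite extension via the continuity identity $\Ch^2(X_{\overline{k(\eta_C)}})=\varinjlim_K\Ch^2(X_K)$ as $K$ ranges over the finite extensions of $k(\eta_C)$ inside $\overline{k(\eta_C)}$; the relation $[\eta_C]=(c_X)_{\overline{k(\eta_C)}}$ is therefore already witnessed in $\Ch^2(X_K)$ for some such $K$. Then I would push forward along the finite flat morphism $\pi\colon X_K\to X_{k(\eta_C)}$ of degree $n=[K:k(\eta_C)]$: since $\eta_C$ and $x_0$ are $k(\eta_C)$-rational points, their pull-backs to $X_K$ are single $K$-rational points, and the projection formula yields $\pi_*[\eta_C]_K=n\cdot[\eta_C]$ and $\pi_*(c_X)_K=n\cdot(c_X)_{k(\eta_C)}$; pushing the equality forward gives exactly (\ref{eqn:minimal}). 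The only subtlety I would want to double-check is that $\pi_*\pi^*=n\cdot\mathrm{id}$ holds integrally on classes of $k(\eta_C)$-rational points, which is a direct consequence of the projection formula for finite flat morphisms and applies equally in positive characteristic.
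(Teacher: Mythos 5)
Your proof is correct and follows essentially the same route as the paper: the paper's one-line proof invokes exactly the two facts you use, namely that the base-change pull-back $\Ch^2(X\times_k k(\eta_C))\to\Ch^2(X\times_k\overline{k(\eta_C)})$ has torsion kernel and that the target is torsion free by Roitman's theorem and its generalizations due to Bloch and Milne. The only difference is that you supply the standard limit-plus-transfer proof of the torsion-kernel statement, which the paper simply cites, and your check that $\pi_*\pi^*=n\cdot{\rm id}$ on rational points is indeed valid integrally and in all characteristics.
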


\begin{proof}
This is an immediate consequence of the fact that the pull-back
(\ref{eqn:pb2}) has torsion kernel and Roitman's theorem \cite{Roitman}, and its generalizations due to Bloch
\cite{Bloch} and Milne \cite{MilneRoit},
showing that  the group $\Ch^2(X\times_k{\overline {k(\eta_C)}})$  is torsion free.
\end{proof}

The Chow group of the generic fibre is best 
 viewed as
\begin{equation}\label{eqn:dirlimi}
\Ch^2(X\times{k(\eta_C)})=\lim\limits_{\rightarrow}\Ch^2(X\times U),
\end{equation}
where the direct limit is over all non-empty Zariski open subsets $U\subset C$,
see \cite[Lem.\ 1.I.20]{Bloch}. Hence, $\kappa_C$ is a torsion class
if and only if the class $\kappa_{C,U}\in\Ch^2(X\times U)$
of the restriction of $[\Delta_C-\{x_0\}\times C]$ to $X\times U$ for some non-empty open subset $U\subset C$ is
torsion. 

\smallskip

 In order to obtain finiteness results, one needs to bound the order of the torsion class
$\kappa_C$ of a constant cycle curve.
\begin{definition}
The \emph{order} of an integral constant cycle curve $C\subset X$ is the order of the torsion class $\kappa_C\in
\Ch^2(X\times{k(\eta_C)})$. The order of an arbitrary constant cycle curve $C$ is the maximal order of its integral components.
\end{definition}

Note that by shrinking $U\subset C$ one can always assume that the order of $\kappa_C$ and 
$\kappa_{C,U}$ coincide. By definition the order is the minimal positive $n$ satisfying (\ref{eqn:minimal}).

\subsection{}  We shall explain the relation between the two notions of constant cycle curves and state some
basic properties.

\begin{prop}\label{lem:ccc} Let $X$ be a K3 surface over an algebraically closed field $k$.

i) Let $C\subset X$ be a curve and let $K/k$ be an algebraically closed base field extension.
Then $C$ is a constant cycle curve of order $n$  if and only
if $C_K\subset X_K$ is a constant cycle curve of order $n$.

ii) Assume $X$ is not (Artin) supersingular and
${\rm char}(k)\ne2$.\footnote{This assumption holds whenever ${\rm char}(k)=0$ and
see below for a reminder on the notion of supersingular K3 surfaces.
What is really needed in the proof is $\rho(X)\ne22$ or  $\Ch^2(X_{k'})\ne\ZZ$ for some algebraically closed
extension $k'/k$. Thanks to Burt Totaro for pointing out how
to weaken the original assumption.}
If $K/k$ is an extension with $K$ also algebraically closed and $D\subset  X_K$ is a
constant cycle curve, then $D$ descends, i.e.\ there exists
a constant cycle curve $C\subset X$ with $D=C_K$.

iii)  If $X$ is defined over a (finitely generated) field $k_0$ with $\bar k_0=k$ and $C\subset X$ is a constant cycle curve,
then the natural $Gal(\bar k/k_0)$-action applied to $C$ yields only  constant cycle curves.
\end{prop}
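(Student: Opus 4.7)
My plan is to handle (i) and (iii) as formal functoriality statements and to reduce (ii) to the non-existence of a positive-dimensional family of constant cycle curves of bounded order.

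For (i), the Beauville--Voisin class is compatible with base change, $(c_X)_K=c_{X_K}$, and the cycle $\Delta_C-\{x_0\}\times C$ pulls back to $\Delta_{C_K}-\{(x_0)_K\}\times C_K$. Consequently the induced flat pullback
\[
\phi\colon\Ch^2(X\times k(\eta_C))\to\Ch^2(X_K\times K(\eta_{C_K}))
\]
sends $\kappa_C$ to $\kappa_{C_K}$, so the order of $\kappa_{C_K}$ divides that of $\kappa_C$. For the reverse divisibility I use the direct-limit description \eqref{eqn:dirlimi} to reduce to injectivity of $\Ch^2(X\times U)\to\Ch^2(X_K\times U_K)$ for open $U\subset C$; this is the standard injectivity of Chow pullback along an algebraically closed extension of algebraically closed base fields, obtained by writing $K$ as a filtered union of finitely generated $k$-subalgebras $R$ and specializing along any $k$-point of $\operatorname{Spec}(R)$, which exists because $k=\bar k$.

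For (ii), let $D\subset X_K$ be a constant cycle curve of order $n$ and let $[D]\in H(K)$ be the corresponding point of the Hilbert scheme $H=\operatorname{Hilb}(X/k)$. Let $Z\subset H$ be the scheme-theoretic image of $\operatorname{Spec}(K)\to H$, an integral $k$-subscheme. If $\dim Z=0$, then $[D]$ is a $k$-point and $D$ descends to some $C\subset X$ with $C_K=D$; part (i) then guarantees $C$ is a constant cycle curve of order $n$. The case $\dim Z\geq 1$ is to be excluded: the universal family $\mathcal{D}\to Z$ gives a family of curves in $X$, every closed fibre of which is a constant cycle curve of order dividing $n$ (apply (i) to the generic point $\eta_Z$ and its specializations). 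Since curves in one connected component of $H$ all lie in a single linear system on the K3 surface, this yields a positive-dimensional family of constant cycle curves of bounded order in some $|L|$, sweeping out a subvariety of $X$ of dimension at least two, hence equal to $X$. A Bloch--Srinivas style decomposition of the diagonal applied to $\mathcal{D}\subset X\times Z$, together with torsion-freeness of $\Ch^2$ over an algebraically closed field, then forces $\Ch^2(X_{k'})=\ZZ$ for some algebraically closed extension $k'/k$, contradicting the non-supersingularity hypothesis as formulated in the footnote.

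For (iii), the group $\operatorname{Gal}(k/k_0)$ acts on $X=X_{k_0}\times_{k_0}k$ via the second factor, inducing a group automorphism $\sigma_*$ of $\Ch^2(X)$. Because $c_X$ is characterized by the intrinsic relations \eqref{eqn:BV}, $\sigma_*(c_X)=c_X$. Applying $\sigma_*$ to $\Delta_C-\{x_0\}\times C$ yields $\Delta_{\sigma(C)}-\{\sigma(x_0)\}\times\sigma(C)$, and since $[\sigma(x_0)]=c_X$ this represents $\kappa_{\sigma(C)}$ after restriction to the generic fibre, using the identification $k(\eta_C)\cong k(\eta_{\sigma(C)})$ induced by $\sigma$. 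As $\sigma_*$ is a group automorphism it preserves torsion orders, so $\sigma(C)$ is a constant cycle curve of order exactly $n$.

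The main obstacle I foresee is the positive-characteristic aspect of (ii). Proposition \ref{prop:finite}, which in characteristic zero would directly exclude $\dim Z\geq 1$, is proved by Hodge-theoretic methods and is unavailable in general; the substitute is a Bloch--Srinivas decomposition of the diagonal, with the non-supersingularity hypothesis entering precisely at the step requiring $\Ch^2(X_{k'})\ne\ZZ$.
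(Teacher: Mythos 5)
Your parts (ii) and (iii) track the paper's own argument: in (ii) the paper likewise turns a curve $D$ not defined over $k$ into a positive-dimensional family of specializations $C_t\subset X$, all constant cycle curves, dominating $X$ and hence forcing $\Ch^2(X_{k'})\cong\ZZ$ for \emph{every} algebraically closed $k'/k$ (you wrote ``for some $k'$'', but you need ``for all'' to contradict the footnote's hypothesis that \emph{some} $k'$ has $\Ch^2(X_{k'})\ne\ZZ$; your construction does give ``for all'', since the family base changes), after which Bloch's theorem $\Ch^2(X\times\overline{k(\eta_X)})\ne\ZZ$ for $\rho(X)\ne22$ supplies the contradiction. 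Part (iii) is as in the paper.

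The genuine gap is in part (i), in the reverse implication for the \emph{order}, which is the technical heart of the proposition. You reduce to injectivity of $\Ch^2(X\times U)\to\Ch^2(X_K\times U_K)$ for open $U\subset C$; that level-wise injectivity is indeed the standard spreading-out-and-specialization fact. But it does not imply injectivity of the map of direct limits $\Ch^2(X\times k(\eta_C))\to\Ch^2(X_K\times k(\eta_{C_K}))$: the limit on the right runs over \emph{all} nonempty open $V\subset C_K$, and the subsystem $\{U'_K\}_{U'\subset C}$ is not cofinal in it, because $V$ may be the complement of closed points $p_i\in C_K$ that dominate $C$ (i.e.\ map to $\eta_C$), and every such $p_i$ lies in every $U'_K$. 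Concretely, $n\cdot\kappa_{C_K}=0$ only gives you that $n\cdot\alpha_K$ dies in some such $V\subsetneq U_K$, and no shrinking of $U$ on the $k$-side reaches that $V$. The paper flags exactly this (``where the last map might a priori not be injective'') and spends the bulk of its proof of (i) repairing it: by the localization sequence the class $\alpha_K$ is supported on $X_K\times\{p_1,\ldots,p_m\}$; writing $\Ch^2(X_K\times_KU_K)$ as a direct limit of $\Ch^2(X\times_kU\times_kW)$ over spectra $W$ of finitely generated $k$-subalgebras of $K$ and restricting to a closed (hence $k$-rational) point $t\in W$ recovers $\alpha$ as a class supported on the finite set $\{p_{1t},\ldots,p_{mt}\}\subset X\times_kU$, whence $\alpha=0$ in $\Ch^2(X\times k(\eta_C))$ after all. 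Without this step (Lecomte's rigidity cannot be quoted directly either, since $k(\eta_C)$ is not algebraically closed), the order could a priori drop under base change, and part (i) --- on which both Proposition \ref{prop:finite} and your own argument for (ii) rely --- would be unproved.
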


\begin{proof} i) The pull-back
\begin{equation}\label{eqn:Lecomte}
\Ch^2(X\times_k k(\eta_C))\to \Ch^2(X_K\times_Kk(\eta_{C_K}))
\end{equation}
induced by the base change $X_K\times_Kk(\eta_{C_K})=(X\times_kk(\eta_C))\times_kK\to X\times_kk(\eta_C)$
has torsion kernel and maps $\kappa_C$ to $\kappa_{C_K}$. Hence, $C$ is a constant cycle curve if and only if
$C_K$ is.

That the order does not change can be shown using arguments of Lecomte
in \cite{Lecomte}, where it is proved that for any variety $Y$ over  an algebraically closed field $K_0$
 the base change $$\Ch^*(Y)\to\Ch^*(Y_K)$$ to a larger algebraically closed field $K/K_0$ induces an isomorphism on torsion.
(Injectivity is enough for our purpose, for which only $K_0$ algebraically closed is needed.) It cannot be applied directly,
as in our case $K_0=k(\eta_C)$ is not algebraically closed, but we may apply it to $Y=X\times U$ for 
open subsets $U\subset C$ to obtain
 $$\xymatrix@R-8pt{\Ch^2(X\times{k(\eta_C)})\ar[d]^\wr&&\Ch^2(X_{K}\times_K k(\eta_{C_K}))\ar[d]^\wr\\
 \lim\limits_{\rightarrow}\Ch^2(X\times_k U)\ar@{^{(}->}[r]&\lim\limits_\rightarrow\Ch^2(X_K\times_K U_K)\ar[r]&\lim\limits_\rightarrow\Ch^2(X_K\times_K V),
 }$$
 where the last map might a priori not be injective. Suppose that the pull-back $\alpha_K$ of $\alpha\in \Ch^2(X\times_k U)$ yields a
 trivial class in $\Ch^2(X_K\times_KV)$ for some open set $V\subset U_K$, which we can assume to be the complement
of finitely many closed points $p_1,\ldots,p_m \in U_K$. (One can further reduce to the case that each of the $p_i$ dominates $C$,
otherwise shrink $U$). Now use the  localization exact sequence (see \cite{Bloch,Fulton,VoisinHodge}):
$$\Ch^1(X_K\times\{p_1,\ldots,p_m\})\to\Ch^2(X_K\times_K U_K)\to \Ch^2(X_K\times_K V)\to 0$$
to conclude that $\alpha_K$ is supported on $\{p_1,\ldots,p_m\}$.
Write $$\Ch^2(X_K\times_KU_K)=\lim\Ch^2(X\times_kU\times_k W)$$ with the limit over all
non-empty open subsets $W\subset{\rm Spec}(A)$ for all
(finitely generated) $k$-algebras $k\subset A\subset K$ (cf.\ \cite{Lecomte}). For
small $W\subset {\rm Spec}(A)$ represent $\alpha_K$ by 
$\alpha_W\in\Ch^2(X\times_kU\times_kW)$. Its restriction $\alpha_t$ to a closed point $t\in W$
gives back $\alpha$. Indeed, since  $k$ is algebraically closed, one has
$k(t)\cong k$ and $\alpha\mapsto\alpha_W\mapsto\alpha_t$ is given by the isomorphism $\Ch^2(X\times_kU)\to \Ch^2(X\times_kU\times_k k(t))$.

But for small $W\subset{\rm Spec}(A)$ the class $\alpha_t$ is supported on the intersection of the closure of $\{p_1,\ldots,p_m\}$
in $X\times_kU\times_kW$ with the fibre over $t$, which is a finite set of points $\{p_{1t},\ldots,p_{mt}\}$ in $X\times_kU\times_kk(t)$. Hence,
$\alpha$ restricted the complement of these points is trivial. This eventually shows that $\alpha$ represents the trivial class
in $\Ch^2(X\times{k(\eta_C)})$.

ii) If $D\subset X_K$ is not defined over $k$, then there exists a one-dimensional
family of curves $C_t\subset X$ which can be seen as specializations of $D$. 
In particular, their classes $\kappa_{C_t}$ are obtained by specializing $\kappa_D$ and, therefore, are also torsion. Thus, one would obtain a dominant family
of pointwise constant cycle curves $C_t\subset X$. For every larger algebraically closed
$k'/k$ base changing the family $\{C_t\}$
defines a dominating family of constant cycle curves for $X_{k'}$ and which would imply $\Ch^2(X_{k'})\cong\ZZ$,
which is excluded for non-supersingular K3 surfaces, as e.g.\ $\Ch^2(X\times \overline{k(\eta_X)})\ne\ZZ$.
Recall that a K3 surface $X$ over a field $k$ of ${\rm char}(k)>0$ is called Artin supersingular if its height is infinite and
Shioda supersingular if $\rho(X_{\bar k})=22$. It has been known for a long time that
Shioda supersingular implies Artin supersingular and the proof of  the converse  has recently be completed
in \cite{Charles,Maulik, Keerthi}. (It has also been known that unirational K3 surfaces are Shioda (and hence Artin)
supersingular and the converse has been established in \cite{Liedtke}.)
Thus, a  K3 surface $X$ over $k$ with ${\rm char}(k)\ne2$
is supersingular if and only if $\rho(X)\ne22$, i.e.\ $H^2_{et}(X,\QQ_\ell(1))\ne{\rm NS}(X)\otimes\QQ_\ell$.
Hence, Bloch's result \cite[Thm.\ 6, Appendix to Sec.\ 1]{Bloch} applies.

iii) The last assertion is obvious, as  the notion of constant cycle curves is scheme-theoretic.
\end{proof}

For the reader's convenience and later use, we recall the following fact, which is a special case of a result due to Voisin (cf.\ \cite[Ch.\  22]{VoisinHodge}) improving upon a result of Bloch and Srinivas \cite{BlochSrini}.

\begin{prop}\label{prop:Voisin}
Assume $k$ is  algebraically closed. Then  a constant cycle curve
$C\subset X$ is also a pointwise constant cycle
curve. If $k$ is uncountable, the converse holds true as well.
\end{prop}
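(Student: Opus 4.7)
The plan is to handle the two implications separately. For $(\Rightarrow)$ one combines the direct-limit description (\ref{eqn:dirlimi}) with the localization sequence and Roitman's torsion-freeness of $\Ch^2(X)$; for $(\Leftarrow)$ one invokes the Bloch--Srinivas spreading principle \cite{BlochSrini}, in the torsion-sensitive form due to Voisin \cite[Ch.\ 22]{VoisinHodge}, and this is the genuinely hard direction.

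For the forward implication, first replace $C$ by its normalization $\widetilde C$, which does not change $\kappa_C$ and allows us to argue on a smooth curve. By definition $n\kappa_C=0$ for some $n>0$, so by (\ref{eqn:dirlimi}) we have $n(\Delta_C-\{x_0\}\times \widetilde C)\big|_{X\times U}=0$ in $\Ch^2(X\times U)$ for some dense open $U\subset \widetilde C$. The localization sequence
\[
\bigoplus_{p\in \widetilde C\setminus U}\Ch^1(X\times\{p\})\longrightarrow \Ch^2(X\times \widetilde C)\longrightarrow \Ch^2(X\times U)\longrightarrow 0
\]
then lets us write $n(\Delta_C-\{x_0\}\times \widetilde C)=\sum_p (i_p)_*\alpha_p$ with $\alpha_p\in\Pic(X)$, where $i_p\colon X\times\{p\}\hookrightarrow X\times \widetilde C$. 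Restricting to $X\times\{q\}$ for any closed $q\in \widetilde C$ gives $n([f_C(q)]-c_X)=\sum_p (i_p)_*\alpha_p\big|_{X\times\{q\}}$; each summand vanishes, for $p\neq q$ by disjoint support and for $p=q$ by the self-intersection formula, since $N_{X\times\{p\}/X\times \widetilde C}$ is pulled back from a line bundle on ${\rm Spec}\,k$ and hence has trivial first Chern class. Thus $n([f_C(q)]-c_X)=0$ in $\Ch^2(X)$, and as $\Ch^2(X)$ is torsion-free by Roitman's theorem \cite{Roitman} and its positive-characteristic extensions \cite{Bloch,MilneRoit}, one concludes $[f_C(q)]=c_X$, i.e.\ $C$ is a pointwise constant cycle curve.

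For the converse, assume $k$ is uncountable and set $Z:=\Delta_C-\{x_0\}\times \widetilde C\in \Ch^2(X\times \widetilde C)$; the pointwise hypothesis gives $Z\big|_{X\times\{y\}}=[f_C(y)]-c_X=0$ in $\Ch^2(X)$ for every closed $y\in \widetilde C$, and the goal is to deduce that $Z$ restricts to a torsion class in $\Ch^2(X\times k(\eta_C))$. This is precisely the situation treated by Bloch--Srinivas and refined by Voisin: rational equivalences between cycles in $X$ are parameterised by a countable disjoint union of quasi-projective Hilbert schemes, and each closed-point witness of $Z_y\sim 0$ lies in a component whose evaluation map targets $\widetilde C$. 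The uncountability of $k$ then forces, by a Baire-type argument, the image of at least one such component to contain a Zariski open $U\subset \widetilde C$, yielding an algebraic family of rational equivalences that trivialises $Z\big|_{X\times U}$ up to torsion. The hard point, and the reason this is not a mere citation of \cite{BlochSrini}, is that Voisin's refinement is needed to retain integral torsion information rather than losing it after tensoring with $\QQ$; uncountability is also essential, since over countable $k$ the parameter components may meet $\widetilde C$ only in countably many points and no spreading occurs.
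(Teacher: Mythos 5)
Your proof is correct and follows essentially the same route as the paper's: localization together with torsion-freeness of $\Ch^2(X)$ for the forward direction, and the Voisin--Bloch--Srinivas decomposition \cite[Cor.\ 22.20]{VoisinHodge} (with the integral torsion control and the uncountability hypothesis exactly where you place them) for the converse. The only, immaterial, difference is how you show that the residual cycle supported on $X\times\{p_1,\ldots,p_m\}$ acts trivially on point classes: you restrict via the Gysin map and invoke the self-intersection formula for the trivial normal bundle, whereas the paper moves the zero-cycle on $C$ away from the points $p_i$.
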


\begin{proof} Clearly, we may assume that $C$ is integral and for simplicity we also assume that $C$ is smooth
(otherwise pass to its normalization and replace $C\,\hookrightarrow X$ by the generically finite
map $\widetilde C\to X$).
The first assertion is easy. If $\kappa_C$ is torsion, say $n\cdot\kappa_C=0$, then there exists
an open subset $U:=C\setminus\{p_1,\ldots,p_m\}\subset C$ such that
$0=n\cdot[(\Delta_C-\{x_0\}\times C)|_{X\times U}]\in\Ch^2(X\times U)$ (use (\ref{eqn:dirlimi})). By 
the localization exact sequence 
\begin{equation}\label{eqn:loc}
\Ch^1(X\times\{p_1,\ldots,p_m\})\to\Ch^2(X\times C)\to \Ch^2(X\times U)\to 0,
\end{equation} we can assume that
$n\cdot(\Delta_C-\{x_0\}\times C)$ is rationally equivalent to a cycle $Z$ on $X\times C$ with support in $X\times\{p_1,\ldots,p_m\}$. 
As any zero-cycle on $C$ is linearly equivalent to one disjoint to the finite set of points $\{p_1,\ldots,p_m\}$,
the induced map $[Z]_*:\Ch_0(C)\to\Ch^2(X)$ is trivial. Thus, $n\cdot[\Delta_C-\{x_0\}\times C]_*:\Ch_0(C)
\to\Ch^2(X)$ is trivial and, since $\Ch^2(X)$ is torsion free, also $[\Delta_C-\{x_0\}\times C]_*=0$. The latter
is equivalent to saying that $[x]\equiv[x_0]=c_X$ for all closed points $x\in C$.

For the converse use \cite[Cor.\ 22.20]{VoisinHodge}, which is stated for $k=\CC$ but in fact holds for any uncountable
field $k$. Then $n\cdot[\Delta_C-\{x_0\}\times C]=[Z]$ for some $n>0$ with $Z$ supported
on a closed set of the form  $X\times \{p_1,\ldots,p_m\}$. But then $Z|_{X_{k(\eta_C)}}=0$ and hence $\kappa_C\in\Ch^2(X\times{k(\eta_C)})$
is torsion. \end{proof}

\subsection{} One could avoid mentioning the distinguished class $c_X\in\Ch^2(X$) in the definition of a constant
cycle curve altogether by proving analogously to Lemma \ref{lem:BVclass} the next

\begin{lem}\label{lem:BVclass2} Let $X$ be a K3 surface over an algebraically closed field $k$.
For an integral curve $C\subset X$ the following conditions are equivalent:

i) The curve $C$ is a constant cycle curve.

ii) There exists a positive integer $n$ such that 
\begin{equation}\label{eqn:minimal2}
n\cdot[\eta_C]\in {\rm Im}\left(\Ch^2(X)\to\Ch^2(X\times_k k(\eta_C))\right),
\end{equation} where the generic point
$\eta_C\in C$ is viewed as a closed point in $X\times_k{k(\eta_C)}$. 

iii) If  $\eta_C\in C$ is viewed as a point in
the geometric generic fibre $X\times_k{\overline {k(\eta_C)}}$, then
$$[\eta_C]\in {\rm Im}\left(\Ch^2(X)\to\Ch^2(X\times_k \overline{k(\eta_C)})\right).$$ 
\end{lem}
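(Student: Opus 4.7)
The proof is modeled on that of Lemma \ref{lem:BVclass}, with one additional step at the end. Writing $K := \overline{k(\eta_C)}$, the implications (i) $\Rightarrow$ (ii), (i) $\Rightarrow$ (iii), and (iii) $\Rightarrow$ (ii) are formal consequences of the earlier lemma: the first two follow from parts (ii), (iii) of Lemma \ref{lem:BVclass} by taking $nc_X$ (respectively $c_X$) as the preimage in $\Ch^2(X)$, and the third uses that the pull-back $\Ch^2(X_{k(\eta_C)}) \to \Ch^2(X_K)$ has torsion kernel (by Roitman's theorem and its extensions), so $[\eta_C] - \gamma_{k(\eta_C)}$ is torsion whenever $[\eta_C]_K = \gamma_K$, yielding $n[\eta_C] = (n\gamma)_{k(\eta_C)}$ for some $n > 0$.

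The substantive direction is (ii) $\Rightarrow$ (i). Suppose $n[\eta_C] = \beta_{k(\eta_C)}$ for some $\beta \in \Ch^2(X)$. By (\ref{eqn:dirlimi}) there is a dense smooth open $U \subset C$ on which this equality is already realized, i.e.\ $n \cdot [\Delta_C|_{X \times U}] = [\beta \times U]$ in $\Ch^2(X \times U)$. Specializing to the fiber over any closed point $p \in U$ yields $n[p] = \beta$ in $\Ch^2(X)$, and torsion-freeness of $\Ch^2(X)$ then shows that all closed points of $U$ share a common class $\alpha \in \Ch^2(X)$ with $n\alpha = \beta$.

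It remains to identify $\alpha = c_X$. Choose a very ample divisor $H$ on $X$; since $\ko_X(H)|_C$ is a base-point-free linear system of positive degree on $C$, a sufficiently general member $H' \in |H|$ meets $C$ transversely in $H \cdot C$ distinct reduced points $p_j$, all lying in $U$ (the finitely many points of $C \setminus U$ being avoidable). Computing the intersection $H' \cdot [C] \in \Ch^2(X)$ in two ways then gives
\[
(H \cdot C)\,\alpha \;=\; \sum_{p_j \in H' \cap C}[p_j] \;=\; H' \cdot [C] \;=\; c_1(H)\cdot c_1(\ko_X(C)) \;=\; (H \cdot C)\,c_X,
\]
where the last equality follows from the Beauville--Voisin relation (\ref{eqn:BV}) by polarization. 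Since $H \cdot C > 0$ and $\Ch^2(X)$ is torsion-free, $\alpha = c_X$; hence $\beta = nc_X$ in $\Ch^2(X)$, and therefore $n\kappa_C = (\beta - nc_X)_{k(\eta_C)} = 0$, so $\kappa_C$ is torsion of order dividing $n$, establishing (i). The main obstacle absent from Lemma \ref{lem:BVclass}, where $c_X$ was fixed from the outset, is precisely this last identification step, which uses the Beauville--Voisin intersection identity on $X$ to recognize the anonymous preimage $\beta$ as a multiple of $c_X$.
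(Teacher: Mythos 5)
Your proof is correct, and for the substantive implication ii) $\Rightarrow$ i) it takes a genuinely different route from the paper's. The paper likewise begins by specializing $n\cdot[\eta_C]=\beta_{k(\eta_C)}$ at closed points to conclude that $C$ is a \emph{pointwise} constant cycle curve, but it then passes back to the torsion of $\kappa_C$ by invoking Proposition \ref{prop:Voisin}, whose converse direction requires $k$ uncountable (it rests on a Bloch--Srinivas decomposition); for countable $k$ the paper must additionally base change to a universal domain and descend via Lemma \ref{lem:ccc}, i). You bypass this detour entirely: by identifying the common class $\alpha$ of the points of $U$ as $c_X$ through the intersection identity $c_1(H)c_1(\ko_X(C))=(H.C)\cdot c_X$, you recognize the anonymous preimage $\beta$ as $n\cdot c_X$ already in $\Ch^2(X)$, and then $n\cdot\kappa_C=(\beta-n\cdot c_X)_{k(\eta_C)}=0$ follows by pulling back this relation --- no uncountability hypothesis, no descent. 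What this buys is a uniform and more self-contained argument; the price is the explicit use of (\ref{eqn:BV}) (equivalently, of the fact that the constant class realized by a pointwise constant cycle curve is $c_X$, which the paper outsources to \cite[Lem.\ 2.2]{VoisinHOG}). Two small remarks: the transversality of $H'\cap C$ is not actually needed, since only the multiplicity-weighted identity $H'\cdot[C]=\sum m_j[p_j]$ with all $p_j\in U$ enters, and this already holds for general $H'$ by base-point-freeness (which also makes the step robust in positive characteristic); and the torsion kernel of $\Ch^2(X\times_k k(\eta_C))\to\Ch^2(X\times_k\overline{k(\eta_C)})$ used for iii) $\Rightarrow$ ii) is the standard norm/transfer argument rather than Roitman's theorem, which instead gives torsion-freeness of the target.
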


\begin{proof}
Clearly, i) implies ii) and iii). Since $\Ch^2(X\times_k k(\eta_C))\to\Ch^2(X\times_k \overline{k(\eta_C)})$
has torsion kernel and torsion free target, ii) and iii) are equivalent.

It remains to show that ii) implies i). Now, for a closed point $x\in C$,  the composition of the specialization map 
$s_x:\Ch^2(X\times k(\eta_C))\to \Ch^2(X)$ (see \cite[Ch.\ 20.3]{Fulton})  with the pull-back (\ref{eqn:minimal2})
yields the identity on $\Ch^2(X)$. If now $n\cdot [\eta_C]$ is in the image of
 (\ref{eqn:minimal2}), say $n\cdot [\eta_C]=\alpha_{k(\eta_C)}$, then $n\cdot[x]=\alpha$, as
 $\eta_C$ clearly specializes to $[x]$. Thus, $C$ is a pointwise constant cycle curve, which for an uncountable field
 is enough to conclude (use Proposition \ref{prop:Voisin}). 
 
 If $k$ is only countable, use  base change to an uncountable algebraically closed extension
 $K/k$ (e.g.\ a universal domain). Clearly, then $n\cdot [\eta_{C_K}]=n\cdot[\eta_C]_K$ is contained
 in the image of $\Ch^2(X)\to\Ch^2(X_K\times_K k(\eta_{C_K}))$. Hence, $C_K$ is a constant cycle curve and, by Lemma
 \ref{lem:ccc}, i), also $C$ is.
\end{proof}

\subsection{}\label{McLeanunderbasechange}
Let $K/k$ be an extension of algebraically closed fields. Let $X$ be a K3 surface over $k$
and denote by $X_K$ the K3 surface over $K$ obtained by base change. The base change morphism
$\xi:X_K\to X$ induces  the pull-back map $\xi^*:\Ch^2(X)\,\hookrightarrow\Ch^2(X_K)$, i.e.\ $[x]_K=\xi^*[x]$ for all
closed points $x\in X$. It is  injective, but not surjective as soon as ${\rm trdeg}_k(K)\geq 2$ and $\rho(X)\ne22$ by \cite{Bloch} or
${\rm trdeg}_k(K)\geq 1$ and ${\rm char}(k)=0$ (see \cite{GGP}).

Compare the following also to \cite[Prop.\ 5]{Gorchinsky}.

\begin{cor} Let $x\in X_K$ be a closed point with $$[x]\in{\rm Im}\left(\Ch^2(X)\to\Ch^2(X_K)\right),$$
e.g.\ $[x]=c_{X_K}=(c_X)_K$.
Then one of the following is true:

i) The image $\xi(x)\in X$ is a closed point, i.e.\ $x$ is defined over $k$.

ii) The closure $C:=\overline{\{\xi(x)\}}\subset X$ of  $\xi(x)\in X$ is a constant cycle curve.

iii) The image $\xi(x)\in X$ is the generic point of $X$ and $\Ch^2(X)\cong\ZZ$.
\end{cor}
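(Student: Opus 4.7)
\emph{The plan.} I would split into cases by the dimension of the scheme-theoretic image $\overline{\{\xi(x)\}}\subset X$. Since $X$ is a surface, $\xi(x)\in X$ is either a closed point (its residue field is then a finite extension of the algebraically closed $k$ and hence equals $k$, so $x\in X(k)$, giving case (i)), the generic point of an integral curve $C\subset X$ (case (ii)), or the generic point of $X$ (case (iii)). The trichotomy is thus built into the geometry of $\xi$; all the content is in cases (ii) and (iii).

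\emph{Case (ii).} Let $C=\overline{\{\xi(x)\}}$, so $k(\xi(x))=k(\eta_C)$. The $K$-point $x\colon{\rm Spec}\,K\to X_K$ factors through ${\rm Spec}\,k(\eta_C)\hookrightarrow X$ and hence determines a $k$-algebra embedding $k(\eta_C)\hookrightarrow K$. Extending this through the algebraic closure gives the tower of flat base changes
$$X_K\xrightarrow{\pi_2}X_{\overline{k(\eta_C)}}\xrightarrow{\pi_1}X_{k(\eta_C)}.$$
Inspection of the (reduced) fibres shows $\pi_1^*[\eta_C]=[\bar x]$, where $\bar x\in X_{\overline{k(\eta_C)}}$ is the closed point coming from $k(\eta_C)\hookrightarrow\overline{k(\eta_C)}$, and $\pi_2^*[\bar x]=[x]$. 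Combined with the hypothesis $[x]=\alpha_K=\pi_2^*\pi_1^*\alpha_{k(\eta_C)}$, this yields
$$\pi_2^*\bigl([\bar x]-\alpha_{\overline{k(\eta_C)}}\bigr)=0\quad\text{in }\Ch^2(X_K).$$
As in the proof of Proposition \ref{lem:ccc}, i), pullback along a field extension between algebraically closed fields has torsion kernel, so $[\bar x]-\alpha_{\overline{k(\eta_C)}}$ is torsion in $\Ch^2(X_{\overline{k(\eta_C)}})$. Since $\overline{k(\eta_C)}$ is separably closed, Roitman--Bloch--Milne gives that this group is torsion free, forcing $[\bar x]=\alpha_{\overline{k(\eta_C)}}$. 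This is exactly condition iii) of Lemma \ref{lem:BVclass2} for the curve $C$, so $C$ is a constant cycle curve.

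\emph{Case (iii).} The same chain of implications with $X$ in place of $C$ produces $[\bar x]=\alpha_{\overline{k(\eta_X)}}$ in $\Ch^2(X_{\overline{k(\eta_X)}})$, where now $\bar x$ is the generic point of $X$ extended to $\overline{k(\eta_X)}$. Unwinding the limit description of the Chow group of the geometric generic fibre and using the localisation sequence, a positive multiple of $\Delta_X-X\times\alpha$ is rationally equivalent on $X\times X$ to a cycle supported on $X\times Z$ for some proper closed $Z\subsetneq X$. Applying the corresponding correspondence to a closed point $x_0\in X\setminus Z$ and moving $\{x_0\}\times Z$ off the supported cycle gives $n([x_0]-\alpha)=0$ in $\Ch^2(X)$ for some $n>0$. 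Torsion-freeness of $\Ch^2(X)$ over $k=\bar k$ promotes this to $[x_0]=\alpha$ for every closed $x_0\in X$, so $\Ch^2(X)=\ZZ\cdot\alpha$, which is isomorphic to $\ZZ$ since $\alpha$ has degree one.

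\emph{Main obstacle.} The delicate step is the bookkeeping in case (ii): identifying $x$, $\bar x$ and $\eta_C$ compatibly across the tower $k\subset k(\eta_C)\subset\overline{k(\eta_C)}\subset K$ and verifying that the flat pullbacks $\pi_1^*$, $\pi_2^*$ act on the relevant point classes as claimed. Once that is in place, the combination of torsion-kernel base change and Roitman--Bloch--Milne collapses the computation to an equality in a torsion-free group, and Lemma \ref{lem:BVclass2} finishes the argument; case (iii) is then a direct application of the Bloch--Srinivas diagonal decomposition.
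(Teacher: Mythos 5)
Your argument is correct and follows essentially the same route as the paper: the trichotomy on $\xi(x)$, the induced inclusion $k(\eta_C)\hookrightarrow k(x)\cong K$, the torsion kernel of base change combined with Roitman--Bloch--Milne torsion-freeness, and the reduction to Lemma \ref{lem:BVclass2}. The only divergence is cosmetic: in case (iii) the paper simply specializes $n\cdot[\eta_X]=n\cdot\alpha_{k(\eta_X)}$ to each closed point of $X$ (the same device used in the proof of Lemma \ref{lem:BVclass2}), whereas you unwind a Bloch--Srinivas decomposition of the diagonal; both are standard and yield $\Ch^2(X)\cong\ZZ$.
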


\begin{proof} Suppose $x$ is not defined over $k$. Then its image in $X$ is either
the generic point of $X$ or of a curve $C\subset X$. In the second case, 
consider the natural inclusion $k(\eta_C)\,\hookrightarrow k(x)\cong K$
for the generic point $\eta_C\in C$. The induced map $\Ch^2(X\times{k(\eta_C)})\to\Ch^2(X_K)$
sends $[\eta_C]$ to $[x]$. Since the kernel is torsion, one can conclude by Lemma \ref{lem:BVclass2}.
Similarly, if $x$ is mapped to $\eta_X\in X$, then $[\eta_X]$ is up  to torsion
contained in the image of $\Ch^2(X)\to\Ch^2(X\times_kk(\eta_X))$. And then, by specialization,
in fact $[y]\equiv{\rm const}$ for all points $y\in X$.
\end{proof}

So, in principle, one could try to produce constant cycle curves by finding points $x\in X_K$ with $[x]=c_{X_K}$ 
not defined over $k$. Then either $\Ch^2(X)\cong\ZZ$ or the closure of $x$ in $X$ is a constant cycle curve.
Although finding points $x$ not defined over $k$ is in principle possible, deciding whether also $[x]=c_{X_K}$ is difficult to verify without 
knowing beforehand that $x$ is contained in a constant cycle curve $C\subset X_K$ which for $\Ch^2(X)\not\cong\ZZ$ would automatically
descend to $k$, cf.\ Lemma \ref{lem:ccc}. Also note that
by i) in Lemma \ref{lem:ccc} one might expect that the order of the constant cycle curve 
$C:=\overline{\{\xi(x)\}}\subset X$ in situation ii) is an invariant of the point $x\in X_K$,
but how to read it off directly from $x$ is unclear.

\begin{cor}\label{cor:cccthroughpt}
Let $x\in X_K$ be a closed point not defined over $k$. Then 

i) Either, $[x]=(c_{X})_K\in\Ch^2(X_K)$ and  there exists a constant cycle curve  $x\in C\subset X_K$\\ or 

ii) the class $[x]$ is not contained in the image of $\Ch^2(X)\to\Ch^2(X_K)$.
\end{cor}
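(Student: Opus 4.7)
The plan is to deduce this corollary directly from the preceding one. Suppose alternative~ii) of the present statement fails, i.e., $[x]$ lies in the image of $\Ch^2(X)\to\Ch^2(X_K)$; it then suffices to establish alternative~i). Since $x$ is by hypothesis not defined over $k$, the trichotomy of the preceding corollary reduces us to its case ii) or iii).

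In case ii) of the preceding corollary, $C:=\overline{\{\xi(x)\}}\subset X$ is a constant cycle curve. Lemma~\ref{lem:ccc}(i) then implies that the base change $C_K\subset X_K$ is a constant cycle curve as well, and $x$ lies on $C_K$ since by construction $\xi(x)=\eta_C$. This produces the desired constant cycle curve through $x$. The forward direction of Proposition~\ref{prop:Voisin} (the implication that a constant cycle curve is pointwise constant cycle, which holds unconditionally) then yields $[x]=c_{X_K}=(c_X)_K$, finishing this case.

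In case iii) of the preceding corollary, $\Ch^2(X)\simeq\ZZ$, so the image of the pull-back $\Ch^2(X)\to\Ch^2(X_K)$ is precisely $\ZZ\cdot(c_X)_K$; since $[x]$ is the class of a closed point, it has degree one, and therefore $[x]=(c_X)_K$. To exhibit a constant cycle curve containing $x$, one uses that $\Ch^2(X)\simeq\ZZ$ forces $X$ to be Shioda supersingular (as in the parenthetical remark in the proof of Proposition~\ref{lem:ccc}(ii)), hence unirational; pulling back a general line under a dominant rational map $\PP^2\dashrightarrow X_K$ produces a rational, and therefore order-one constant cycle, curve through $x$.

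The argument is essentially organizational: the preceding corollary does the real work, and both alternatives there feed directly into alternative~i) of the present statement. The only point that requires any thought is producing the curve through $x$ in case iii); but this case is vacuous in characteristic zero by Mumford's theorem, and in positive characteristic it is handled by the unirationality of supersingular K3 surfaces.
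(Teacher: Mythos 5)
Your reduction to the preceding corollary is exactly the paper's strategy (its entire proof is the one line ``take the curve $\overline{\{\xi(x)\}}\subset X$ (or any curve contained in it) and base change it''), and your treatment of case ii) of that corollary is correct. The problem is case iii). The implication you invoke there --- that $\Ch^2(X)\cong\ZZ$ forces $X$ to be Shioda supersingular --- is false: \emph{every} K3 surface over $\bar\FF_p$ has $\Ch^2(X)\cong\ZZ$ (see the beginning of Section \ref{sec:finitefields}), supersingular or not. The parenthetical remark you cite in the proof of Proposition \ref{lem:ccc}, ii) is careful on exactly this point: what is excluded for non-supersingular surfaces is $\Ch^2(X_{k'})\cong\ZZ$ for the algebraically closed \emph{extension} $k'=\overline{k(\eta_X)}$, not $\Ch^2(X)\cong\ZZ$ over the base field itself. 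For the same reason, ``vacuous in characteristic zero by Mumford's theorem'' does not follow as stated: Mumford's theorem concerns uncountable fields, whereas the Bloch--Beilinson Conjecture \ref{conj:BBC} predicts $\Ch^2(X)\cong\ZZ$ for every $X$ over $\bar\QQ$, so case iii) cannot be dismissed for $k=\bar\QQ$ on the strength of $\Ch^2(X)\cong\ZZ$ alone.

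The repair is to use not merely the stated conclusion of case iii) but what its proof actually establishes: $n\cdot[\eta_X]$ lies in the image of $\Ch^2(X)\to\Ch^2(X\times_k k(\eta_X))$ for some $n>0$. Pulling this back along $k(\eta_X)\,\hookrightarrow K(\eta_{X_K})$ and then specializing to the generic point of any curve $D\subset X_K$ through $x$ shows, via Lemma \ref{lem:BVclass2}, that every such $D$ is a constant cycle curve; this settles case iii) directly, with no appeal to supersingularity or unirationality (and it is what the paper's parenthetical ``or any curve contained in it'' must be read as, since a curve base changed from $X$ never contains a point lying over $\eta_X$). If you prefer your route, the same stronger statement gives $\Ch^2(X\times\overline{k(\eta_X)})\cong\ZZ$, and only then does Bloch's theorem yield $\rho(X)=22$ and hence supersingularity; note also that a \emph{general} line in $\PP^2$ misses the preimage of $x$, so you would still need that a unirational surface carries a rational curve through every prescribed point.
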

\begin{proof}
Indeed, take the curve $\overline{\{\xi(x)\}}\subset X$ (or any curve contained in it) and base change
it to a curve in $X_K$.
\end{proof}

Rephrasing this result yields another proof of a weak form of the main result of \cite{GGP} for K3 surfaces
and of the original result by Bloch \cite{Bloch}.
Contrary to the  proof in \cite{GGP}, the arguments here do not involve Hodge theory and, therefore, work
in positive characteristic. Roughly, the result says, if $\Ch^2(X)_0\ne0$, then $\Ch^2(X)$ grows
under any base field extension to a bigger algebraically closed field.

\begin{cor}\label{cor:BGGPweak}
Assume $K/k$ is an extension of algebraically closed fields with  ${\rm trdeg}_k(K)>0$.
If $X$ is a K3 surface over $k$ with $\Ch^2(X)\not\cong\ZZ$, then 
$$\Ch^2(X)\to\Ch^2(X_K)$$
is not surjective.\qed
\end{cor}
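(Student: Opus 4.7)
The strategy is to prove the contrapositive. Assume the pull-back $\Ch^2(X)\to\Ch^2(X_K)$ is surjective; the plan is to deduce $\Ch^2(X)\cong\ZZ$. The first move is to produce, from the hypothesis $\Ch^2(X)\not\cong\ZZ$, an integral curve $C\subset X$ that is \emph{not} a constant cycle curve. If every integral curve in $X$ were a constant cycle curve, then by Proposition~\ref{prop:Voisin} each would also be a pointwise constant cycle curve, hence (as $k=\bar k$) every closed point lying on such a curve would realize the Beauville--Voisin class $c_X$. Since every closed point of $X$ lies on some hyperplane section, $\Ch^2(X)$ would then be generated by the single class $c_X$, and the degree map would force $\Ch^2(X)\cong\ZZ$, against our standing hypothesis.

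Having fixed such a non-ccc curve $C$, the next step is to transfer its generic point $\eta_C$ into a $K$-rational point of $X_K$. Since $K$ is algebraically closed with $\mathrm{trdeg}_k(K)\geq 1=\mathrm{trdeg}_k(k(\eta_C))$, the function field $k(\eta_C)$ admits an embedding into $K$. Composing $\mathrm{Spec}(K)\to\mathrm{Spec}(k(\eta_C))\to C\hookrightarrow X$ produces a $K$-rational point of $X$ which lifts to a closed point $x\in X_K$ whose projection under $\xi\colon X_K\to X$ is exactly $\eta_C$. In particular $x$ is not defined over $k$.

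For the contradiction, surjectivity gives $[x]=\alpha_K$ for some $\alpha\in\Ch^2(X)$. The pull-back factors as $\Ch^2(X)\to\Ch^2(X_{k(\eta_C)})\to\Ch^2(X_K)$, and the second arrow has torsion kernel---the same fact used in the proof of Corollary~\ref{cor:cccthroughpt}, obtained by factoring through $\Ch^2(X_{\overline{k(\eta_C)}})$. This yields $n\cdot[\eta_C]=n\cdot\alpha_{k(\eta_C)}$ in $\Ch^2(X_{k(\eta_C)})$ for some $n>0$, and Lemma~\ref{lem:BVclass2}~ii) then forces $C$ to be a constant cycle curve, contradicting the choice made in step one. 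The main subtlety I anticipate is the torsion-kernel claim in this final step; the other steps are essentially formal consequences of the machinery already built up in the paper.
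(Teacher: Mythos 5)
Your proof is correct and follows the paper's intended route: the corollary is stated in the text as a rephrasing of Corollary \ref{cor:cccthroughpt} (and the trichotomy preceding it), and you have unpacked exactly that rephrasing, supplying the one needed observation that $\Ch^2(X)\not\cong\ZZ$ forces the existence of a curve that is not a constant cycle curve (via Proposition \ref{prop:Voisin}) and then running the torsion-kernel argument through $\Ch^2(X_{k(\eta_C)})$ together with Lemma \ref{lem:BVclass2}. No gaps.
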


\begin{remark} The following is rather speculative and probably well-know to experts:
One may wonder, if the above opens  a way to prove the Bloch--Beilinson conjecture
for K3 surfaces (see Conjecture \ref{conj:BBC}). Suppose $X$ is a K3 surface over $k=\bar\QQ$.
If for any $x_0\in X(k)$ there exist a field extension $K/k$ and a point $x\in X(K)\setminus X(k)$ with $[x]=[x_0]_K$,
then $\Ch^2(X)\cong\ZZ$. Indeed, by Corollary \ref{cor:cccthroughpt} one would have
$[x]=(c_X)_K$,
as $[x]$ is by assumption contained in the image of
$\Ch^2(X)\to\Ch^2(X_K)$, and hence $[x_0]=c_X$.
Unfortunately, I do not know of any method that could possibly construct such a point $[x]$.
Note that it has to be important that the original surface is defined over a number field, i.e.\ $k=\bar\QQ$,
as we do not expect $\Ch^2(X)\cong\ZZ$ for $X$ over bigger fields. (A concrete counterexample
has been given by Schoen, see \cite{Jannsen}.)
\end{remark}

Similarly to (\ref{eqn:McLean}) one can define for any point $x_0\in X(k)$ the set 
\begin{equation}\label{eqn:McLean2}
X_{[x_0]}(k):=\{ x\in X(k)~|~[x]=[x_0]\in\Ch^2(X)\},
\end{equation}
which due to Maclean's result \cite{ML} is dense (at least for generic complex $X$).
If $C\subset X$ is a constant cycle curve, then $C(k)\subset X_{c_X}(k)$. Now consider
a  non-trivial extension $K/k$ with $K$ algebraically closed. Then  
$C_K\subset X_K$ remains a constant cycle curve. Clearly, not all points in $C_K$ will
be defined over $k$ and, therefore, the natural inclusion 
$$X_{c_X}(k)\subset X_{c_X}(K)$$
is strict. For any other class $[x_0]\ne c_X$
the set of points realizing it does not grow
under base field extension, as shown by the next result
which is again just a reformulation of Corollary \ref{cor:cccthroughpt}.

\begin{cor} Let $K/k$ be any extension of algebraically closed fields. If $X$ is a K3
surface over $k$ and $x_0\in X$ is a closed point with $[x_0]\ne c_X$, then
$$X_{[x_0]}(k)={X}_{[x_0]}(K),$$
i.e.\ all points in $X_K$ rationally equivalent to $x_0$ are defined over $k$.\qed 
\end{cor}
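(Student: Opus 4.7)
The plan is to deduce the equality $X_{[x_0]}(k)=X_{[x_0]}(K)$ directly from Corollary \ref{cor:cccthroughpt}; once that corollary is available the argument is essentially formal. The inclusion $X_{[x_0]}(k)\subset X_{[x_0]}(K)$ is automatic: if $x\in X(k)$ satisfies $[x]=[x_0]$ in $\Ch^2(X)$, then applying the pull-back $\Ch^2(X)\hookrightarrow\Ch^2(X_K)$ gives $[x_K]=[x_0]_K$, so the base change of $x$ lies in $X_{[x_0]}(K)$.

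For the reverse inclusion I would argue by contradiction. Fix a closed point $x\in X_K$ with $[x]=[x_0]_K$ and suppose $x$ is not defined over $k$. Since $[x]=[x_0]_K$ manifestly lies in the image of the base change map $\Ch^2(X)\to\Ch^2(X_K)$, alternative (ii) of Corollary \ref{cor:cccthroughpt} is excluded, and alternative (i) must apply, forcing $[x]=(c_X)_K$. Combining this with $[x]=[x_0]_K$ gives $[x_0]_K=(c_X)_K$, and then the injectivity of $\Ch^2(X)\hookrightarrow\Ch^2(X_K)$ (recorded at the beginning of Section \ref{McLeanunderbasechange}) yields $[x_0]=c_X$, contradicting the standing hypothesis $[x_0]\ne c_X$. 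Hence $x$ must be defined over $k$ after all, and the reverse inclusion follows.

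There is no real obstacle here: the entire content has already been packaged in the preceding corollary, so what remains is the short deduction above together with the observation that injectivity of the base change map converts the equality of classes in $\Ch^2(X_K)$ into the desired equality in $\Ch^2(X)$. The one bookkeeping point is the convention for \emph{defined over $k$} — namely that the image of $x$ under $\xi:X_K\to X$ is a closed point of $X$, and $x$ is then identified with the corresponding base-changed point — but this is already built into the statement of Corollary \ref{cor:cccthroughpt}.
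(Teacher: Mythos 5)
Your proof is correct and follows exactly the route the paper intends: the paper itself presents this corollary as ``just a reformulation of Corollary \ref{cor:cccthroughpt}'', and your deduction (excluding alternative (ii) because $[x]=[x_0]_K$ lies in the image of the base change map, then using injectivity of $\Ch^2(X)\hookrightarrow\Ch^2(X_K)$ to contradict $[x_0]\ne c_X$) is precisely that reformulation spelled out.
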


\section{Constant cycle curves on other surfaces}\label{sec:BlochConj}
The notion of constant cycle curves makes sense for other types of surfaces, see Section \ref{sec:defkappa}. We shall briefly discuss the case of
surfaces satisfying Bloch's conjecture. Recall that Bloch's conjecture for surfaces $X$ with $p_g(X)=0$ predicts that the kernel of
the Albanese map
$$\Ch^2(X)_0\to{\rm Alb}(X)$$ is trivial. It has been verified in \cite{BKL} for all surfaces of Kodaira dimension $\leq1$ and for many
surfaces of general type, in particular those dominated by products of curves. In \cite[Cor.\ 7.7]{Kimura} it has been shown that the finite-dimensionality of the Chow motive ${\mathfrak h}(X)$ (in the sense of Kimura--O'Sullivan)
implies Bloch's conjecture. For simplicity we will restrict to the case  $q(X)=0$,
otherwise one would have to restrict to curves in the fibres of the Albanese map.

\begin{prop} Let $X$ be a smooth projective surface with $p_g(X)=q(X)=0$ over an algebraically closed field $k$ of characteristic zero.
Then $X$ satisfies Bloch's conjecture, i.e.\ $\Ch^2(X)_0=0$, if and only if every curve in $X$ is a constant cycle curve.
\end{prop}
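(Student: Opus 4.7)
The two directions are mediated by Proposition \ref{prop:Voisin}, which---combined with the torsion-freeness of $\Ch^2$ over an algebraically closed field in characteristic zero (valid here because $q(X)=0$ forces $\mathrm{Alb}(X)=0$ and Roitman applies)---identifies pointwise constant cycle curves with constant cycle curves once the base field is uncountable.

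\smallskip

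\emph{Direction $(\Leftarrow)$.} Suppose every curve $C\subset X$ is a constant cycle curve. By Proposition \ref{prop:Voisin} each such $C$ is in particular a pointwise constant cycle curve. Given two closed points $x,y\in X$, I pick a sufficiently positive very ample line bundle $L$ so that the linear subsystems $|L\otimes\mathfrak m_x|$ and $|L\otimes\mathfrak m_y|$ admit integral members $C_1\ni x$ and $C_2\ni y$ (Bertini applied after twisting). Since $C_1\cdot C_2=L^2>0$, there is an intersection point $z\in C_1\cap C_2$, and the pointwise property on $C_1$ and $C_2$ yields $[x]=[z]=[y]$ in $\Ch^2(X)$. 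Thus all closed points of $X$ define a common class; combined with $q(X)=0$, which makes $\Ch^2(X)_0$ the kernel of the degree map, this forces $\Ch^2(X)_0=0$.

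\smallskip

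\emph{Direction $(\Rightarrow)$.} Suppose $\Ch^2(X)_0=0$. For any integral $C\subset X$, any two closed points of $C$ have degree-zero difference and hence the same class in $\Ch^2(X)$, so $C$ is a pointwise constant cycle curve. If $k$ is uncountable, Proposition \ref{prop:Voisin} immediately upgrades this to $C$ being a constant cycle curve. In general I pass to an uncountable algebraically closed extension $K/k$: the vanishing $\Ch^2(X_K)_0=0$ persists for $p_g=q=0$ surfaces in characteristic zero, either via finite-dimensionality of the Chow motive in the sense of Kimura--O'Sullivan or via the Bloch--Srinivas diagonal decomposition---the usual vehicles by which Bloch's conjecture is established for these surfaces. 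Proposition \ref{prop:Voisin} then applies over $K$, and Proposition \ref{lem:ccc}(i) descends the constant cycle property back to $k$.

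\smallskip

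\emph{Main obstacle.} The subtlety lies in the $(\Rightarrow)$ direction for countable $k$: transferring the pointwise identification of classes to the stronger statement about the class $\kappa_C$ at the generic point requires an uncountable base, so one must know that Bloch's conjecture is stable under algebraically closed extensions. For the class of surfaces considered this stability is standard, but it is where the real content of the implication resides.
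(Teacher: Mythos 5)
Your overall architecture is sound: the $(\Leftarrow)$ direction and the uncountable case of $(\Rightarrow)$ match the paper (which dismisses $(\Leftarrow)$ with ``clearly''; your Bertini argument is a correct way to fill that in, and reducing the uncountable case to Proposition \ref{prop:Voisin} is exactly the paper's first step). The problem is the step you yourself single out as the main obstacle: the claim that $\Ch^2(X_K)_0=0$ ``persists'' when one passes from a countable $k$ to an uncountable algebraically closed $K$. Neither of the two justifications you offer actually starts from the stated hypothesis. The Bloch--Srinivas decomposition of the diagonal needs as input that $\Ch_0$ is trivial \emph{over a universal domain} --- equivalently, that $[\eta_X]\in\Ch^2(X\times_k k(\eta_X))$ is constant up to torsion --- whereas $\Ch^2(X)_0=0$ over a countable $k$ only controls closed points and says nothing a priori about the generic one; this is precisely the pointwise-versus-generic distinction the whole paper is organized around. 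Likewise, Kimura--O'Sullivan finite-dimensionality is not a hypothesis here, and deriving it from $\Ch^2(X)_0=0$ for a $p_g=q=0$ surface is itself a theorem, not a formality. So the sentence ``this stability is standard'' is exactly where your proof is incomplete.

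The paper closes this gap by citing that theorem: \cite[Thm.\ 7]{GP} identifies Bloch's conjecture for such $X$ with finite-dimensionality of ${\mathfrak h}(X)$; by \cite{KMP} this forces the transcendental motive ${\mathfrak t}^2(X)$ to vanish (it is finite-dimensional with trivial cohomology since $p_g=0$), whence $\Ch^2(X\times k(\eta_X))_0\otimes\QQ=0$; one then specializes from $\eta_X$ to the generic point $\eta_C$ of any curve to see that $\kappa_C$ is torsion. If you insert this input, your packaging --- propagate the vanishing to an uncountable $K$, apply Proposition \ref{prop:Voisin} there, descend via Proposition \ref{lem:ccc}, i) --- is a legitimate variant of the paper's argument. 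Two smaller points: Propositions \ref{prop:Voisin} and \ref{lem:ccc} are stated for K3 surfaces, so you should remark (as the paper implicitly does) that their proofs --- localization, Roitman via $q(X)=0$, Lecomte rigidity, and Voisin's decomposition theorem --- carry over verbatim to surfaces with $p_g=q=0$; and in the $(\Leftarrow)$ direction it is even quicker to put $x$ and $y$ on a single integral member of $|L\otimes\mathfrak m_x\otimes\mathfrak m_y|$.
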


\begin{proof} Suppose $X$ satisfies Bloch's conjecture.
If $k$ is uncountable, e.g.\ $k=\CC$, then the assertion follows from the arguments used to prove Proposition \ref{prop:Voisin}. Indeed,
assuming Bloch's conjecture, every curve is a pointwise constant cycle curve. For arbitrary field $k$ one
uses \cite[Thm.\ 7]{GP} (or directly  the techniques of
\cite{BlochSrini}) which states that Bloch's conjecture is equivalent to the finite-dimensionality of ${\mathfrak h}(X)$.
By \cite{KMP} the Chow motive ${\mathfrak h}(X)$ is finite-dimensional if and only if its transcendental motive
${\mathfrak t}^2(X)$ is finite-dimensional. But for $p_g(X)=0$, the latter is equivalent to ${\mathfrak t}^2(X)=0$ and by \cite[Cor.\ 7.4.9]{KMP}
this implies $\Ch^2(X\times k(\eta_X))_0\otimes\QQ=0$ (see also the discussion in \cite[Ch.\ 4.1]{Andre}). To conclude, use that the
generic point of any curve $C\subset X$ is a specialization of $\eta_X$.
Conversely, if every curve $C\subset X$ is a constant cycle curve,
then clearly $\Ch^2(X)\cong\ZZ$. \end{proof}

However, determining the order of  constant cycle curves in these surfaces is a different matter. Mainly, because nothing
seems to be known about the integral version of 
the transcendental motive ${\mathfrak t}^2(X)$ and the torsion in $\Ch^2(X\times k(\eta_X))$ is difficult
to control.

The following cases are instructive for our purpose.

\begin{prop}\label{prop:ccconrat}
Let $X$ be a  surface over an algebraically closed field $k$ with $p_g(X)=q(X)=0$ satisfying Bloch's conjecture $\Ch^2(X_K)_0=0$ for all algebraically closed extensions $K/k$.

i) There exists an integer $n$ such that all curves on $X$ are constant cycle curves
of order $\leq n$.\footnote{Thanks to Claire Voisin for suggesting this.}

ii) If $X$ is rational, then every curve $C\subset X$ is a constant cycle curve of order one.

iii) If $X$ is an Enriques surface, then all curves are constant cycle curves
of order $n| 4$.
\end{prop}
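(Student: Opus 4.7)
The plan is to handle the three parts in sequence, with (i) supplying the uniform torsion bound and (ii)--(iii) providing the explicit divisibility.

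For (i), I would first invoke the preceding proposition to conclude that $X$ satisfies Bloch's conjecture, then chain the equivalences of \cite{GP} and \cite{KMP} (see also \cite{Andre}): Bloch integrally implies finite-dimensionality of $\mathfrak{h}(X)$, which gives $\mathfrak{t}^2(X)=0$ and hence $\Ch^2(X\times k(\eta_X))_0\otimes\QQ=0$. In particular, the ``universal'' class
$$\kappa_{\eta_X}:=[\eta_X]-(c_X)_{k(\eta_X)}\in\Ch^2(X_{k(\eta_X)})$$
is torsion of some finite order $n$. For an arbitrary integral curve $C\subset X$, the local ring $\mathcal{O}_{X,\eta_C}$ is a DVR with fraction field $k(\eta_X)$ and residue field $k(\eta_C)$; the associated specialization map $\Ch^2(X_{k(\eta_X)})\to \Ch^2(X_{k(\eta_C)})$ sends $\kappa_{\eta_X}$ to $\kappa_C$ and preserves torsion relations, so $n\cdot\kappa_C=0$ uniformly in $C$.

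For (ii), I would note that a rational surface is universally $\Ch_0$-trivial: starting from $\Ch^2(\PP^2_K)\cong\ZZ$ for any field $K$ and using blow-up invariance, $\Ch^2(X_K)\cong\ZZ$ for every field extension $K/k$. Specializing to $K=k(\eta_C)$, the degree-zero class $\kappa_C$ must vanish, so $C$ is a constant cycle curve of order one.

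For (iii), I would use the K3 double cover $\pi\colon\tilde X\to X$ with Enriques involution $\sigma$, and exploit the standard identities $\pi_*\pi^*=2\cdot\mathrm{id}$ and $\pi^*\pi_*=1+\sigma^*$. Since Bloch holds integrally for Enriques \cite{BKL}, one has $\Ch^2(X)\cong\ZZ$, and since $\sigma$ is non-symplectic we have $\sigma^*c_{\tilde X}=c_{\tilde X}$; together these force $\pi^*c_X=2c_{\tilde X}$. The key observation is that for any algebraically closed extension $K\supseteq k(\eta_C)$, Bloch for the Enriques surface $X_K$ gives $[\eta_C]_K=(c_X)_K$ in $\Ch^2(X_K)$, so $\pi^*\kappa_C$ vanishes in $\Ch^2(\tilde X_K)$. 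Thus $\pi^*\kappa_C$ lies in the torsion kernel of the base-change map $\Ch^2(\tilde X_{k(\eta_C)})\to\Ch^2(\tilde X_{\overline{k(\eta_C)}})$. Once one establishes that this torsion class is annihilated by $2$, then
$$4\kappa_C=2\pi_*\pi^*\kappa_C=\pi_*(2\pi^*\kappa_C)=0,$$
giving order dividing $4$.

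The main obstacle is precisely this last step: bounding $\pi^*\kappa_C$ by $2$-torsion on the K3 cover. Because K3 surfaces do not satisfy Bloch's conjecture, one cannot apply the strategy of (i) directly to $\tilde X$. Instead, the refined bound should emerge from the combination of two pieces of structure: $\pi^*\kappa_C$ is $\sigma^*$-invariant since it lies in $\mathrm{Im}(\pi^*)$, and the geometric generic fibre trivialises the K3 cover over a quadratic extension $k(\tilde X)/k(X)$, so that the class descends along an explicit degree-2 Galois extension. These together should force the $2$-primary torsion bound needed to close the argument.
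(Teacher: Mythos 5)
Your parts (i) and (ii) are essentially correct. For (i) the paper argues more directly: the hypothesis applied to $K=\overline{k(\eta_X)}$ gives $\Ch^2(X\times\overline{k(\eta_X)})_0=0$, and since the kernel of $\Ch^2(X\times k(\eta_X))\to\Ch^2(X\times\overline{k(\eta_X)})$ is torsion, the class $[\eta_X]-[x_0]_{k(\eta_X)}$ is killed by some $n$; specialization to $\eta_C$ then gives the uniform bound, exactly as in your last step. Your detour through Kimura--O'Sullivan finite-dimensionality via \cite{GP} and \cite{KMP} reaches the same conclusion but is unnecessary given the hypothesis, and it quietly imports a characteristic-zero restriction that the direct argument avoids. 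For (ii), your ``universally $\Ch_0$-trivial'' formulation is a clean repackaging of the paper's explicit decomposition $[\Delta_{\PP^2}]=h_1^2\times 1+h_1\times h_2+1\times h_2^2$ together with the reduction to a minimal model; both yield $\kappa_C=0$.

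Part (iii) has a genuine gap, and you have correctly located it yourself: everything hinges on showing $2\cdot\pi^*\kappa_C=0$ on the K3 cover, and neither of the two structural facts you invoke delivers this. The $\sigma^*$-invariance of $\pi^*\kappa_C$ gives only $(1+\sigma^*)\pi^*\kappa_C=2\pi^*\kappa_C=\pi^*(2\kappa_C)$, which is a tautology, and the degree-two extension $k(\widetilde X)/k(X)$ concerns the covering, not the base field $k(\eta_C)$ over which the torsion lives. Worse, the approach is structurally circular: since $\pi_*\pi^*\kappa_C=2\kappa_C$, the order of $\pi^*\kappa_C$ is bounded \emph{below} by the order of $2\kappa_C$, so one cannot bound the order of $\kappa_C$ by first bounding that of $\pi^*\kappa_C$ without an independent input on the K3 side --- and, as you note, Bloch's conjecture is unavailable there. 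The paper's mechanism is entirely different and stays on the Enriques surface: every Enriques surface carries an elliptic fibration $X\to\PP^1$ with a $2$-section, this structure persists after base change to $k(\eta_C)$, and the Bloch--Kas--Lieberman argument of \cite{BKL} shows that for an elliptic surface over an \emph{arbitrary} field with a multisection of degree $n$ the group $\Ch^2(X')_0$ is annihilated by $n^2$. Applied to $X'=X\times k(\eta_C)$ with $n=2$ this kills $\kappa_C$ by $4$ directly. To repair your write-up you would need to replace the double-cover step by this fibration argument (or find some genuinely new bound for antiinvariant-free torsion classes on the covering K3, which does not seem to be available).
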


\begin{proof} 
i) For an integral curve $C\subset X$ consider the specialization map
$$\Ch^2(X\times k(\eta_X))\to \Ch^2(X\times k(\eta_C))$$ which sends $[\eta_X]$ to $[\eta_C]$. Thus,
it suffices to show that there exists an $n$ with $n\cdot[\eta_X]=n\cdot [x_0\times k(\eta_X)]$
for some $x_0\in X$ or, equivalently, that $[\eta_X]-[x_0\times k(\eta_X)]$ is contained in the kernel
of $\Ch^2(X\times k(\eta_X))\to\Ch^2(X\times\overline{k(\eta_X)})$. But by assumption $\Ch^2(X\times\overline{k(\eta_X)})\cong\ZZ$ and $[\eta_X]-[x_0\times k(\eta_X)]$ is of degree zero.
\smallskip

ii) As rational curves are constant cycle curves of order one
(see Lemma \ref{sec:ratcurves} for the argument which is a direct consequence
of the definition), we may also replace  $X$ by a  minimal model.

So, if $X$ is rational, we can in fact assume  $X\cong\PP^2$. Now use that $[\Delta_{\PP^2}]\in\Ch^2(\PP^2\times\PP^2)$ can be written as
$[\Delta_{\PP^2}]=h_1^2\times 1+h_1\times h_2+1\times h_2^2$, where $h_i$, $i=1,2$,
denote the hyperplane sections on the two factors. Hence, for every integral curve $C\subset\PP^2$ one finds
$[\Delta_{C}]=[\Delta_{\PP^2}]|_{\PP^2\times C}=h_1^2\times[C]+h_1\times h_2|_C$ and, therefore,
$\kappa_C=0$.
\smallskip

iii)
Every Enriques surface admits an elliptic fibration $X\to \PP^1$ with a $2$-section $C_0\subset X$.
Then the base change to $X\times k(\eta_C)$ for any curve $C\subset X$ comes with a natural $2$-section, too. Now copy the arguments in  \cite{BKL} to show that every class in $\Ch^2(X')_0$ for an Enriques surface
$X'$ over an arbitrary field $k'$ is annihilated by $n^2$, when $n$ is the degree of a multi-section. In \cite{BKL}
this was combined with the fact that $\Ch^2(X')_0$ is torsion free for $k'$ algebraically closed
to deduce $\Ch^2(X')_0=0$. Here, we apply it to $X'=X\times k(\eta_C)$ and $k'=k(\eta_C)$ to
conclude the assertion.
\end{proof}

\section{Finiteness of constant cycle curves of fixed order}\label{sec:finite}
The aim of the section is to prove

\begin{prop}\label{prop:finite} Let $X$ be a projective K3 surface over an algebraically closed field of charac\-teris\-tic zero.
Then there are at most finitely many constant cycle  curves $C$ of fixed order $n$ in any linear system
$|L|$.
\end{prop}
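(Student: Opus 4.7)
By the base change invariance of the constant cycle condition (Lemma~\ref{lem:ccc}, i)), we may as well assume $k=\CC$. Let $U\subset|L|$ denote the Zariski open locus parametrising smooth integral members of $|L|$; the complement is a finite union of lower-dimensional strata which can be handled by descending induction on dimension, so it suffices to prove finiteness over $U$. Over $U$ we have the universal curve $\pi\colon\mathcal{C}\to U$ together with its embedding $\mathcal{C}\subset X\times U$. Fix a reference point $x_0\in X$ with $[x_0]=c_X$, and form the relative cycle
\[
\mathcal{Z}\;:=\;\Gamma-\{x_0\}\times_U\mathcal{C}\;\in\;\Ch^2(X\times\mathcal{C}),
\]
where $\Gamma\subset X\times\mathcal{C}$ is the graph of the first projection $\mathcal{C}\to X$. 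Its restriction to the fibre $X\times C_t$ is $[\Delta_{C_t}-\{x_0\}\times C_t]$ and projects to $\kappa_{C_t}\in\Ch^2(X\times k(\eta_{C_t}))$.

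Next, after subtracting a relative section $\mathcal{C}\times_U\{c(t)\}$ (where $c\colon U\to\mathcal{C}$ is any section, possibly after shrinking $U$) to render the cycle fibrewise homologically trivial, the class $[\mathcal{Z}]$ produces an admissible normal function $\nu$ on $U$ with values in the family of intermediate Jacobians $\mathcal{J}\to U$ with fibre $J^2(X\times C_t)$. This target is non-trivial precisely because the transcendental K\"unneth piece $H^2(X)_{\rm tr}\otimes H^1(C_t)\subset H^3(X\times C_t)$ is non-zero, and admissibility follows from the general theorems of M.~Saito on normal functions arising from relative algebraic cycles over a quasi-projective base. The key implication is one-sided: if $\kappa_{C_t}$ is $n$-torsion, then $n\cdot[\mathcal{Z}_t]$ lies in the kernel of $\Ch^2(X\times C_t)\to\Ch^2(X\times k(\eta_{C_t}))$ and hence, by the localization sequence (\ref{eqn:loc}), is rationally equivalent to a cycle supported on finitely many vertical fibres $X\times\{p_j\}$. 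Such cycles have vanishing Abel--Jacobi class, so $n\cdot\nu(t)=0$.

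By the algebraicity theorem for zero loci of admissible normal functions, due to Brosnan--Pearlstein \cite{BP} and M.~Saito \cite{Saito}, the locus $\{t\in U\mid n\nu(t)=0\}$ is Zariski closed in $U$. It contains the set of $t\in U$ parametrising constant cycle curves of order dividing $n$. If this closed subset had a positive-dimensional component, that component would yield a dominating family of pointwise constant cycle curves in $X$; specialising to the generic point $\eta_X$ exactly as in the proof of Proposition~\ref{lem:ccc}~ii), one would conclude that every point of a Zariski open subset of $X$ realises $c_X$, hence that $\Ch^2(X_{\overline{k(\eta_X)}})\cong\ZZ$, contradicting the fact that complex K3 surfaces are never (Shioda-)supersingular. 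Therefore the algebraic torsion locus is zero-dimensional, i.e.\ finite.

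The main technical obstacle is the careful construction of the normal function $\nu$ and the verification of its admissibility, in particular its controlled extension across the boundary of $U$ inside $|L|$ so that Brosnan--Pearlstein/Saito apply. A secondary subtlety is that we only exploit the implication ``$\kappa_{C_t}$ torsion $\Rightarrow \nu(t)$ torsion''; the converse is very likely to fail, since Abel--Jacobi on $X\times C_t$ cannot detect the full transcendental part of $\Ch^2$ when $p_g(X)>0$ (Mumford). For the conclusion this is enough: the locus of constant cycle curves of order $n$ is merely embedded into the algebraic torsion locus of $\nu$, which is then forced to be finite by the no-family argument.
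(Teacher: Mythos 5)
Your setup (the relative cycle, the normal function, the one-sided implication ``$\kappa_{C_t}$ torsion $\Rightarrow$ torsion Abel--Jacobi class'', and the appeal to Brosnan--Pearlstein/Saito) matches the paper's strategy, but the proof has a genuine gap at the decisive step: ruling out positive-dimensional components of the zero locus $Z(n\nu)$. You argue that such a component ``would yield a dominating family of pointwise constant cycle curves''. It would not. As you yourself note in your last paragraph, the implication you use only goes one way: a point of $Z(n\nu)$ is a curve whose Abel--Jacobi invariant is $n$-torsion, and because the Abel--Jacobi map on a surface with $p_g>0$ has an enormous kernel (Mumford), the generic member of such a component need not be a constant cycle curve. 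So no dominating family of constant cycle curves is produced, and no contradiction with $\Ch^2(X)\not\cong\ZZ$ arises. (Note also that a ``no positive-dimensional family of ccc's'' argument could never yield finiteness by itself: the locus of constant cycle curves of order $n$ is a priori only a countable union of closed sets, and it could be countably infinite with every component a point.) The paper closes exactly this hole with Lemma \ref{lem:boundary}: it computes the topological invariant $\delta(\nu)\in H^1(T_0,R^3\pi_*\ZZ)$ of the normal function via the Leray spectral sequence and shows it is non-torsion over \emph{every} positive-dimensional subfamily $T_0$, using that $T(X)$ is an irreducible weight-two Hodge structure and that $H^2(T_0,R^2\pi_*\ZZ)=0$ for an affine curve $T_0$. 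A positive-dimensional component of $Z(N\nu)$ would force $N\delta(\nu)=0$ over that component, a contradiction. This computation is the heart of the proof and is entirely missing from your proposal.

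A secondary error: your claim that cycles supported on finitely many vertical fibres $X\times\{p_j\}$ ``have vanishing Abel--Jacobi class'' is false. A homologically trivial cycle such as $D\times\{p\}-D\times\{p'\}$ has Abel--Jacobi class $[D]\otimes(p-p')$ living in the algebraic part ${\rm NS}(X)\otimes\Pic^0(\widetilde C)$ of $J^3(X\times\widetilde C)$; only its image in the transcendental quotient $J^3_{\rm tr}$ (cf.\ (\ref{eqn:Jactr}) and Lemma \ref{lem:trAJ}) vanishes. This is precisely why the paper splits $J^3$ into transcendental and algebraic parts, normalizes $Z_C$ by a degree-one line bundle $L_0$ with $L_0^{2g-2}\cong f_C^*L$ so that ${\rm AJ}_{\rm alg}(Z_C)=0$ (Lemma \ref{lem:algpart}), and only then compares $Z(n\nu_{\rm tr})$ with $Z(nN\nu)$ (Corollary \ref{cor:trN}). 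Your version, which subtracts an arbitrary relative section, leaves an uncontrolled algebraic part, so even the inclusion of the constant-cycle locus into $Z(n\nu)$ fails as stated; you would need to pass to $\nu_{\rm tr}$ or make the paper's choice of $L_0$. Finally, the reduction to the smooth members of $|L|$ by ``descending induction'' is too quick: one needs the stratification by singularity type together with a simultaneous normalization and a compactification of the resulting family, as in Section \ref{sec:normal}, before any of the Hodge-theoretic machinery applies.
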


It is clearly enough to prove the theorem for complex K3 surfaces. The techniques, involving Hodge theory and normal functions,
do not apply to K3 surfaces over fields of positive characteristic. In fact, for unirational K3 surfaces over $\bar\FF_p$,
which come with infinitely many rational and hence constant cycle curves of order one in a certain linear system, the result is clearly false.
Whether this is the only exception is not clear.


\subsection{}\label{sec:ZC} 
For the Hodge theoretic considerations below, we first need to introduce a compactification of $\kappa_C$  to a class
in $\Ch^2(X\times C)$ that is different from the naive one used e.g.\ in the proof of Proposition \ref{prop:Voisin}.

Let $X$ be a K3 surface with a fixed point $x_0\in X$ such that $[x_0]=c_X$. Consider an integral curve $C\subset X$ and its normalization $\widetilde C\to C$. As before,
the composition
shall be denoted $f_C:\widetilde C\to C\subset X$. Given $C$ and
cycle $\sum n_i \cdot y_i$ of degree one on $\widetilde C$,
one defines 
\begin{equation}\label{eqn:defZC}
Z_C:=\Delta_{f_C}-C\times\left(\sum n_i\cdot y_i\right)-\{x_0\}\times \widetilde C
\end{equation}
which is an integral cycle on the smooth threefold $X\times \widetilde C$. Here,
$\Delta_{f_C}$ denotes the graph of $f_C$ which can also be seen as the pull-back of the
diagonal $\Delta_X\subset X\times X$ under $({\rm id},f_C):X\times\widetilde C\to X\times X$.
Note that, although not reflected by the notation,
$Z_C$ and the associated class $[Z_C]\in\Ch^2(X\times\widetilde C)$ do
depend on the cycle $\sum n_i\cdot y_i$ resp.\ the associated line bundle $L_0:=\ko\left(\sum n_i\cdot y_i\right)$ on $\widetilde C$.
(In Lemma \ref{lem:trAJ} below the transcendental Abel--Jacobi class associated with $Z_C$ will be shown to be independent of $\sum n_i\cdot y_i$.)
 Geometrically, it
would seem natural to choose $L_0$ to be of the form $\ko(y_0)$ for some point
$y_0\in \widetilde C$. However, for arguments involving families of curves $C$, allowing line bundles
has technical advantages, cf.\ Section \ref{sec:normal}. In fact, we will later choose $L_0$ such that $L_0^{2g-2}\cong f_C^*L$,
for $C\in|L|$ with $L.L=2g-2$.

It is straightforward to check that $Z_C$ is homologously trivial, e.g.\ for a complex K3 surface
the induced map $$[Z_C]_*:H^*(\widetilde C,\ZZ)\to H^*(X,\ZZ)$$ is zero.
This allows one to define its Abel--Jacobi class, see Section \ref{subsec:AJ}.

For an integral curve $C$, the generic points $\eta_{\widetilde C}\in\widetilde C$ and
$\eta_C\in C$ can be identified. Then

\begin{lem} For an integral curve $C\subset X$,  restriction to the generic fibre maps
$[Z_C]$ to $\kappa_C$, i.e.
$$\Ch^2(X\times \widetilde C)\to\Ch^2(X\times{k(\eta_C)}),~~[Z_C]\mapsto\kappa_C.$$
\end{lem}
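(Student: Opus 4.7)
The strategy is simply to restrict each of the three summands making up $Z_C$ in (\ref{eqn:defZC}) to the generic fibre of the second projection $X\times\widetilde C\to\widetilde C$ and to observe that the result agrees with the defining expression for $\kappa_C$ in (\ref{eqn:kappa}). Recall from (\ref{eqn:dirlimi}) that
$$\Ch^2(X\times k(\eta_C))=\lim\limits_{\rightarrow}\Ch^2(X\times U),$$
the limit being taken over nonempty open subsets $U\subset\widetilde C$. So it suffices to exhibit a single such $U$ on which $[Z_C]|_{X\times U}$ agrees, under the induced identification, with the restriction of $[\Delta_C-\{x_0\}\times C]$.

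Choose $U\subset\widetilde C$ open, disjoint from the finite support $\{y_1,\dots,y_r\}$ of $\sum n_iy_i$, and mapping isomorphically onto its image $f_C(U)$ contained in the smooth locus of $C$; this is possible because $\widetilde C\to C$ is birational. On $X\times U$ the middle cycle $C\times(\sum n_iy_i)$ vanishes outright, being supported on $X\times\{y_1,\dots,y_r\}$. The graph piece $\Delta_{f_C}\cap(X\times U)$ is the graph of $f_C|_U\colon U\to X$, which under the identification $U\simeq f_C(U)$ coincides with $\Delta_C\cap(X\times f_C(U))$. Finally, $\{x_0\}\times\widetilde C$ restricts on $X\times U$ to $\{x_0\}\times U$, matching $\{x_0\}\times f_C(U)$ under the same identification.

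Summing the three contributions and passing to the direct limit over such $U$ yields
$$[Z_C]|_{X\times k(\eta_C)}=[\Delta_C-\{x_0\}\times C]|_{X\times k(\eta_C)}=\kappa_C,$$
as required. There is no genuine obstacle; the only mild point to watch is that $U$ must lie in the open locus where $\widetilde C\to C$ is an isomorphism, but this excludes only the finitely many preimages of the singular points of $C$, so the admissible $U$'s still form a cofinal system in the directed system of opens used in (\ref{eqn:dirlimi}). In particular, the independence of $[Z_C]$ from the auxiliary choice of the degree-one cycle $\sum n_iy_i$, once restricted to the generic fibre, is manifest from this computation.
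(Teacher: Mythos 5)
Your proof is correct and is exactly the evident verification the paper has in mind: the paper states this lemma without proof, treating it as immediate from the definitions, and your term-by-term restriction (the middle summand dying because it is supported over finitely many closed points of $\widetilde C$, the graph and the constant term matching those of $\Delta_C-\{x_0\}\times C$ over the common function field $k(\eta_{\widetilde C})=k(\eta_C)$) is precisely that argument. The remark about the opens $U\subset\widetilde C$ avoiding the exceptional locus being cofinal in the directed system of (\ref{eqn:dirlimi}) is a correct and appropriate point of care.
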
 
Clearly, if $[Z_C]$ is torsion, then so is $\kappa_C$ and hence $C$ is a constant cycle curve, but the converse is not
true. If in addition  $L_0^{2g-2}\cong f_C^*L$ is required, then at least
the question whether the cycle $Z_C$ is torsion does no
longer depend on the particular $L_0$, for another choice of $L_0$ differs by torsion in $\Pic^0(\widetilde C)$.

\begin{remark}\label{rem:comp}
We briefly discuss various other possible choices for $Z_C$. We restrict to the case of a complex K3 surface $X$.

i) On $X\times X$ one often
considers $$Z_X:=\Delta_X-\sum n_i \cdot(C_i\times D_i)-\{x_0\}\times X-X\times\{x_0\},$$ the
transcendental part of the diagonal. The curves
$C_i,D_i\subset X$ and the $n_i\in \QQ$ are chosen such that $[Z]_*$ is trivial on $H^0(X,\QQ)\oplus H^4(X,\QQ)\oplus {\rm NS}(X)\otimes\QQ\subset H^*(X,\QQ)$, i.e.\ $[Z_X]\in T(X)\otimes T(X)\otimes\QQ\subset H^4(X\times X,\QQ)$, where $T(X)\subset H^2(X,\ZZ)$
is the transcendental lattice. For $\rho(X)=1$, this can be rewritten as
\begin{equation}\label{eqn:ZX}
Z_X=\Delta_X-(1/(2g-2))\cdot(C\times C)-\{x_0\}\times X-X\times\{x_0\},
\end{equation}
with $C.C=2g-2$, which is defined in general and satisfies $(2g-2)\cdot[Z_X]\in [C]^\perp\otimes[C]^\perp\subset H^2(X,\ZZ)\times
H^2(X,\ZZ)$.

The pullback to $X\times\widetilde C$ would be another
natural choice for $Z_C$ with the property that its restriction to $X\times{k(\eta_C)}$ represents $\kappa_C$.
The obvious advantage to be defined universally on $X\times X$ has the price, due to the coefficient
$1/(2g-2)$, of being only rationally
defined, i.e.\ $[({\rm id},f_C)^*Z_X]$ is well defined only in $\Ch^2(X\times\widetilde C)\otimes\QQ$, which makes it more difficult to work with its Abel--Jacobi class. 

 Similarly, the class defined by the cycle
$$Z'_C:=\Delta_{f_C}-(1/(2g(\widetilde C)-2))\cdot(C\times D)-\{x_0\}\times C$$
with $D\in|\omega_C|$ would be independent
of any additional choice of $L_0\in{\rm Pic}^1(\widetilde C)$, but again it is only rational. 

\smallskip

ii) In \cite{GG} Green and Griffiths study a cycle introduced by Faber and Pandharipande. For a divisor
$D$ of degree $d$ on a smooth curve $C$ they define $$z_D:=D\times D-d \cdot D_\Delta.$$ Assume 
now that $C$ is contained in a K3 surface $X$ and suppose for simplicity that $C$ generates ${\rm NS}(X)$. Then
for $D\in|\omega_C|$ 
$$z_D\sim(2-2g(C))\cdot Z_X|_{C\times C}$$ 
under $C\times C\subset X\times X$,
where in this case  $Z_X$ is as in (\ref{eqn:ZX}).
Similarly, $z_D$ is rationally equivalent to the restriction of $(2-2g(C))\cdot Z_C'$ under
$C\times C\subset X\times C$.

As pointed out in \cite{GG}, the class $[z_D]\in\Ch^2(C\times C)\otimes\QQ$ for $D\in|\omega_C|$ is (by Bloch--Beilinson) conjectured to be trivial for curves $C$
defined over $\bar\QQ$, but it is also shown that it is not trivial for general $C$ of genus $g(C)\geq4$
(see also the shorter proof in \cite{Yin}). For $4\leq g(C)<10$ this
can be used to show that for general $C\subset X$ (with varying $X$) the class
$[Z'_C]\in\Ch^2(X\times C)\otimes\QQ$ is non-trivial.
\end{remark}

\subsection{}\label{subsec:AJ} Consider the Abel--Jacobi map  for the smooth threefold $X\times\widetilde C$:
$${\rm AJ}:\Ch^2(X\times\widetilde C)_{\rm hom}\to J^3(X\times \widetilde C), ~~\gamma\mapsto\int_\Gamma.$$
Here, $\Ch^2(X\times\widetilde C)_{\rm hom}$ denotes the homologically trivial part
and for any $\gamma$ we let $\Gamma$ be
a real three-dimensional cycle with $\partial\Gamma=\gamma$.
The intermediate Jacobian
$$J^3(X\times \widetilde C):=\frac{H^3(X\times \widetilde C,\CC)}{F^2H^3(X\times\widetilde C)+H^3(X\times \widetilde C,\ZZ)}
\cong \frac{F^2H^3(X\times \widetilde C)^*}{H^3(X\times\widetilde C,\ZZ)}$$
is a complex torus of dimension $22\cdot g(\widetilde C)$. 

We shall also need the `transcendental part' of the intermediate Jacobian which we  define as
(see also \cite{Green,VoisinHigher}): 
\begin{equation}\label{eqn:Jactr}
J^3(X\times\widetilde C)_{\rm tr}:=\frac{F^2(T(X)\otimes H^1(\widetilde C,\CC))^*}{T(X)\otimes H^1(\widetilde C,\ZZ)}.
\end{equation} 
Here, $T(X):={\rm NS}(X)^\perp\subset H^2(X,\ZZ)$ is  the transcendental lattice which equals
the orthogonal complement $[C]^\perp\subset H^2(X,\ZZ)$ if  $\rho(X)=1$. We shall denote the composition of
the projection $J^3(X\times\widetilde C)\to J^3(X\times\widetilde C)_{\rm tr}$ with the
Abel--Jacobi map by
$${\rm AJ}_{\rm tr}:\Ch^2(X\times \widetilde C)_{\rm hom}\to J^3(X\times\widetilde C)_{\rm tr}.$$

The following is well-known, see e.g.\ \cite{VoisinHigher}:
\begin{lem}\label{lem:trAJ}
The transcendental Abel--Jacobi map ${\rm AJ}_{\rm tr}$ factorizes naturally
$${\rm AJ}_{\rm tr}:\Ch^2(X\times\widetilde C)_{\rm hom}/({\rm Pic}(X)\times\Ch^1(\widetilde C)_{\rm hom})\to J^3(X\times
\widetilde C)_{\rm tr}.$$
In particular, ${\rm AJ}_{\rm tr}(Z_C)$ is independent of the choice of $L_0=\ko(\sum n_i\cdot y_i)\in{\rm Pic}^1(\widetilde C)$
as in (\ref{eqn:defZC}).
\end{lem}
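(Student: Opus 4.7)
The plan is to exploit the K\"unneth formula together with the orthogonal decomposition of $H^2(X,\QQ)$ into algebraic and transcendental parts. Because $X$ is a K3 surface, $H^1(X,\ZZ)=H^3(X,\ZZ)=0$, so K\"unneth gives
$$H^3(X\times\widetilde C,\ZZ)\cong H^2(X,\ZZ)\otimes H^1(\widetilde C,\ZZ).$$
Tensoring the rational orthogonal splitting $H^2(X,\QQ)={\rm NS}(X)_\QQ\oplus T(X)_\QQ$ of Hodge structures (with ${\rm NS}(X)_\QQ$ of pure type $(1,1)$) with $H^1(\widetilde C,\QQ)$ yields a decomposition of rational Hodge structures and, correspondingly, an isogeny
$$J^3(X\times\widetilde C)\sim\bigl({\rm NS}(X)\otimes J^1(\widetilde C)\bigr)\oplus J^3(X\times\widetilde C)_{\rm tr},$$
where the second summand is precisely the torus defined in (\ref{eqn:Jactr}). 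The natural projection to the second factor realizes the transition ${\rm AJ}\to {\rm AJ}_{\rm tr}$.

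The crux is then to verify that, for any $\alpha\in{\rm Pic}(X)$ and $\beta\in\Ch^1(\widetilde C)_{\rm hom}$, the external product $p_1^*\alpha\cdot p_2^*\beta\in\Ch^2(X\times\widetilde C)_{\rm hom}$ has Abel--Jacobi image lying in the summand ${\rm NS}(X)\otimes J^1(\widetilde C)$. This is the standard multiplicativity of Abel--Jacobi with respect to cup-product by divisor classes: passing through the Deligne cohomology cycle class map one has $c_{\rm D}(p_1^*\alpha\cdot p_2^*\beta)=p_1^*c_{\rm D}(\alpha)\cup p_2^*c_{\rm D}(\beta)$, and unwinding the K\"unneth decomposition of $H^3_{\rm D}(X\times\widetilde C,\ZZ(2))$ shows that the resulting class equals $c_1(\alpha)\otimes{\rm AJ}_{\widetilde C}(\beta)\in{\rm NS}(X)\otimes J^1(\widetilde C)$. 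The projection to the transcendental part annihilates this, and so ${\rm AJ}_{\rm tr}$ descends to the quotient by the subgroup generated by such external products, yielding the claimed factorization.

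For the ``in particular'' assertion, two choices $L_0,L_0'$ of degree-one line bundles on $\widetilde C$ differ by $\beta:=L_0-L_0'\in{\rm Pic}^0(\widetilde C)=\Ch^1(\widetilde C)_{\rm hom}$, and the corresponding cycles (\ref{eqn:defZC}) differ by $-C\times\beta=-p_1^*[C]\cdot p_2^*\beta$, which lies in ${\rm Pic}(X)\times\Ch^1(\widetilde C)_{\rm hom}$ and is thus killed by ${\rm AJ}_{\rm tr}$.

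The main obstacle is making the multiplicativity step precise. Proving directly from the definition ${\rm AJ}(\gamma)=\int_\Gamma$ that the Abel--Jacobi class of $p_1^*\alpha\cdot p_2^*\beta$ decomposes under K\"unneth as $c_1(\alpha)\otimes{\rm AJ}_{\widetilde C}(\beta)$ would require a careful choice of three-chain bounding the cycle; the cleanest route is instead to invoke the compatibility of the cycle class map with cup products in Deligne cohomology, a formalism that simultaneously records Chern classes of line bundles and Abel--Jacobi classes of homologically trivial one-cycles.
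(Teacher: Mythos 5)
Your argument is correct, but it takes a more formal route than the paper's. The paper proves the factorization directly from the definition ${\rm AJ}(\gamma)=\int_\Gamma$: for a cycle of the form $D\times\sum n_i\cdot x_i$ with $D\subset X$ a curve and $\sum n_i=0$, one may choose the bounding three-chain to be the product $\Gamma=D\times\Gamma_0$ with $\partial\Gamma_0=\sum n_i\cdot x_i$, and then for any test form $\alpha\otimes\beta$ with $\alpha\in T(X)_\CC$ the integral factors as $\left(\int_D\alpha\right)\cdot\left(\int_{\Gamma_0}\beta\right)=0$, since $[D]\in{\rm NS}(X)$ is orthogonal to $T(X)$. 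So the step you flag as the ``main obstacle'' --- and sidestep via the multiplicativity of the Deligne cohomology cycle class map --- is in fact a one-line computation once the chain is chosen as a product; for the vanishing on the transcendental part one does not need the full K\"unneth identification of the Abel--Jacobi class as $c_1(\alpha)\otimes{\rm AJ}_{\widetilde C}(\beta)$, only that some bounding chain is a product and that ${\rm NS}(X)\perp T(X)$. Your Deligne-cohomology route is valid and buys slightly more (the precise location of the class in ${\rm NS}(X)\otimes J^1(\widetilde C)$, which could be useful elsewhere, e.g.\ for the algebraic part in Lemma \ref{lem:algpart}), at the cost of invoking heavier machinery; the paper's choice is shorter and self-contained. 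Your treatment of the ``in particular'' assertion matches the paper's intent exactly.
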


\begin{proof} One has to show that $\int_\Gamma$ is trivial on $T(X)\otimes H^1(\widetilde C,\CC)$
whenever $\partial\Gamma$ is of the form $D\times \sum n_i\cdot x_i$ for some curve
$D\subset X$ and with $x_i\in \widetilde C$ and $\sum n_i=0$. But in this case one can assume that $\Gamma$ is of the form $D\times \Gamma_0$ with a path
$\Gamma_0\subset \widetilde C$ and clearly $\int_D=0$ on $T(X)$.
\end{proof}

The Abel--Jacobi class of ${\rm AJ}(\kappa_C)$ is the class $e_{X,C}$ in \cite{Green}, which can also be understood as an extension class
in the category of mixed Hodge structures. The transcendental part is
(essentially) the class studied further in \cite{VoisinHigher}.
The second assertion of the following has been observed already by Green in \cite{Green}.

\begin{cor}\label{cor:kappaAJ}
Suppose $C\subset X$ is an integral curve. Then 
\begin{equation}{\rm AJ}_{\rm tr}(\kappa_C):={\rm AJ}_{\rm tr}(Z_C)\in
J^3(X\times \widetilde C)_{\rm tr}
\end{equation} is well-defined. If $C$ is a constant cycle curve of order $n$,
then $n\cdot{\rm AJ}_{\rm tr}(\kappa_C)=0$.
\end{cor}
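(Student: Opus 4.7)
The plan is to treat the two claims separately. The well-definedness of ${\rm AJ}_{\rm tr}(Z_C)$ as an invariant of the embedded curve $C\subset X$ is essentially already contained in Lemma \ref{lem:trAJ}: changing $L_0\in\Pic^1(\widetilde C)$ alters $Z_C$ by $C\times(L_0'-L_0)$ with $L_0'-L_0\in\Pic^0(\widetilde C)\subset\Ch^1(\widetilde C)_{\rm hom}$, and changing the reference point $x_0$ among those with $[x_0]=c_X$ alters $Z_C$ by a cycle that is already rationally trivial on $X\times\widetilde C$ (both summands are $p_1^*$-pullbacks of classes equal in $\Ch^2(X)$). Hence the substantive content of the corollary is the torsion statement.

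For this, I would combine three ingredients: the direct limit description (\ref{eqn:dirlimi}), the localization exact sequence on $X\times\widetilde C$, and Lemma \ref{lem:trAJ}. Since $n\cdot\kappa_C=0$, by (\ref{eqn:dirlimi}) there is a non-empty open $U\subset\widetilde C$, with complement $\{p_1,\ldots,p_m\}$, such that $n\cdot Z_C$ restricts to zero in $\Ch^2(X\times U)$. The localization sequence
\begin{equation*}
\Ch^1\bigl(X\times\{p_1,\ldots,p_m\}\bigr)\longrightarrow\Ch^2(X\times\widetilde C)\longrightarrow\Ch^2(X\times U)\longrightarrow 0
\end{equation*}
then permits representing $n\cdot[Z_C]\in\Ch^2(X\times\widetilde C)$ by a cycle of the form $\sum_{i=1}^m D_i\times\{p_i\}$ for some divisors $D_i$ on $X$.

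The final step is to show this representative has trivial transcendental Abel--Jacobi class. Fixing an auxiliary reference point $p_0\in\widetilde C$, one splits
\begin{equation*}
\sum_i D_i\times\{p_i\}=\sum_i D_i\times(\{p_i\}-\{p_0\})+\Bigl(\sum_i D_i\Bigr)\times\{p_0\}.
\end{equation*}
The first summand lies in $\Pic(X)\times\Ch^1(\widetilde C)_{\rm hom}$ and is therefore killed by ${\rm AJ}_{\rm tr}$ thanks to Lemma \ref{lem:trAJ}. For the second summand, the homological triviality of $n\cdot Z_C$, together with $[\{p_i\}]=[\{p_0\}]$ in $H^2(\widetilde C,\ZZ)$, forces $\sum[D_i]=0$ in $H^2(X,\ZZ)$; since $X$ is a K3 surface one has $\Pic^0(X)=0$, so $\sum D_i$ is already rationally trivial, and the second summand vanishes already in $\Ch^2(X\times\widetilde C)$. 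This yields $n\cdot{\rm AJ}_{\rm tr}(\kappa_C)=0$, as required.

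The step demanding the most care is passing from the vanishing of $n\cdot\kappa_C$ on $X\times U$ to a genuinely `vertical' representative of $n\cdot Z_C$ on $X\times\widetilde C$: this is precisely where the localization sequence is indispensable. The rest is essentially formal once one has Lemma \ref{lem:trAJ} and the vanishing $\Pic^0(X)=0$; this latter fact is specific to K3 surfaces and is what allows one to discard the unbalanced vertical contribution $(\sum D_i)\times\{p_0\}$ outright rather than having to track it as an additional error term.
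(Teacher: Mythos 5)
Your proof is correct and follows essentially the same route as the paper: localization to reduce $n\cdot Z_C$ to a cycle supported on fibres $X\times\{p_i\}$, then Lemma \ref{lem:trAJ} to kill it under ${\rm AJ}_{\rm tr}$. The only difference is that you make explicit the step the paper leaves implicit, namely disposing of the unbalanced piece $(\sum_i D_i)\times\{p_0\}$ via homological triviality and the injectivity of $\Pic(X)\to H^2(X,\ZZ)$ for a K3 surface, which is a worthwhile clarification.
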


\begin{proof} One argues as in the proof of Proposition \ref{prop:Voisin}.
Using (\ref{eqn:loc}) and Lemma \ref{lem:trAJ}, one finds that ${\rm AJ}_{\rm tr}(Z_C)$ only
depends on its restriction to some open subset $U:=\widetilde C\setminus \{p_1,\ldots,p_k)$.
But if $n\cdot\kappa_C=0$, then $U$ can be chosen such that $n\cdot[Z_C|_{X\times U}]=0$ in $\Ch^2(X\times U)$ and hence $n\cdot{\rm AJ}_{\rm tr}(Z_C)=0$.
\end{proof}

One can also project ${\rm AJ}(Z_C)$ onto a class
${\rm AJ}_{\rm alg}(Z_C)$ in the algebraic part of the intermediate Jacobian.
\begin{equation}\label{eqn:Jacalg} J^3(X\times \widetilde C)_{\rm alg}:=\frac{F^2({\rm NS}(X)\otimes H^1(\widetilde C,\CC))^*}{{\rm NS}(X)\otimes H^1(\widetilde C,\ZZ)}\cong\Pic^0(\widetilde C)^{\rho(X)}.
\end{equation}
To simplify the notation we shall henceforth
assume $\rho(X)=1$ (but see Remark \ref{rem:rho>1}), so that
$J^3(X\times \widetilde C)_{\rm alg}=F^2([C]\otimes H^1(\widetilde C,\CC))^*/([C]\otimes H^1(\widetilde C,\ZZ))\cong\Pic^0(\widetilde C)$.

\begin{lem}\label{lem:algpart} For an integral curve $C\in |L|$ with $L.L=2g-2$, one has
$${\rm AJ}_{\rm alg}(Z_C)=f_C^*L\otimes (L_0^{2g-2})^*\in\Pic^0(\widetilde C).$$
Thus, if $L_0$ is chosen such that $L_0^{2g-2}\cong f_C^*L$, then ${\rm AJ}_{\rm alg}(Z_C)=0$. Cf.\ \cite[p.\ 270]{Green}.
\end{lem}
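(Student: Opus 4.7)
The plan is to reduce Lemma \ref{lem:algpart} to a direct computation of the correspondence action of $Z_C$ on $L\in{\rm NS}(X)$, after making the identification $J^3(X\times\widetilde C)_{\rm alg}\cong\Pic^0(\widetilde C)$ explicit.

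First, since $H^1(X,\ZZ)=H^3(X,\ZZ)=0$, the Künneth formula yields $H^3(X\times\widetilde C,\CC)\cong H^2(X,\CC)\otimes H^1(\widetilde C,\CC)$ as Hodge structures, and the algebraic part of $J^3$ is the image of ${\rm NS}(X)\otimes H^1(\widetilde C,\CC)$ modulo $F^1$ and the integral lattice. Under the running assumption $\rho(X)=1$ with generator $L$, this specializes to
$$J^3(X\times\widetilde C)_{\rm alg}\cong H^1(\widetilde C,\CC)/(F^1+H^1(\widetilde C,\ZZ))\cong\Pic^0(\widetilde C),$$
sending $[L]\otimes\xi\mapsto\xi$.

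Next I would invoke the standard compatibility between the Abel--Jacobi map and the correspondence action: for any homologically trivial cycle $Z\in\Ch^2(X\times\widetilde C)_{\rm hom}$, the class ${\rm AJ}_{\rm alg}(Z)$ is identified, via the above isomorphism, with the correspondence pushforward
$$Z_*L:=(p_{\widetilde C})_*(Z\cdot p_X^*L)\in\Pic^0(\widetilde C).$$
This is classical and implicit in \cite[p.~270]{Green}; one checks it by writing the Abel--Jacobi pairing on Künneth components and identifying the ${\rm NS}(X)$-factor with Poincaré duality on $X$.

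With this in hand the lemma follows from three summand-wise computations for $Z_C=\Delta_{f_C}-C\times D-\{x_0\}\times\widetilde C$ with $D=\sum n_iy_i$ and $\deg D=1$. The graph contributes $(\Delta_{f_C})_*L=f_C^*L$, since $p_X|_{\Delta_{f_C}}=f_C$ and $p_{\widetilde C}|_{\Delta_{f_C}}={\rm id}$. The product contributes $(C\times D)_*L=\deg(L|_C)\cdot D=(2g-2)\cdot D$, using $L\cdot C=L^2=2g-2$, which as a line bundle is $L_0^{2g-2}$. Finally $(\{x_0\}\times\widetilde C)_*L=0$, upon representing $L$ by a divisor missing $x_0$. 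Summing gives
$${\rm AJ}_{\rm alg}(Z_C)=(Z_C)_*L=f_C^*L\otimes(L_0^{2g-2})^{*},$$
of degree zero as required, since $\deg f_C^*L=2g-2=\deg L_0^{2g-2}$. The only genuinely non-trivial point is the compatibility invoked in the second paragraph; once it is in place the remainder is direct bookkeeping, and for $\rho(X)>1$ the same calculation applies componentwise to yield an element of ${\rm Hom}({\rm NS}(X),\Pic^0(\widetilde C))$.
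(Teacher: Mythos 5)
Your proposal is correct and follows essentially the same route as the paper: the compatibility ${\rm AJ}_{\rm alg}(Z)=Z_*L$ that you invoke as classical is exactly what the paper verifies by hand, by intersecting a chain $\gamma$ with $\partial\gamma=Z_C$ against $C'\times\widetilde C$ for a generic $C'\in|L|$ and projecting to the second factor, and your term-by-term evaluation of $(Z_C)_*L$ reproduces its identification of the resulting boundary with $f_C^*L\otimes(L_0^{2g-2})^*$. The only organizational difference is that the paper first reduces to $L_0=\ko(y_0)$ by separately computing the dependence of ${\rm AJ}_{\rm alg}(Z_C)$ on $L_0$, whereas you treat a general $L_0=\ko\left(\sum n_i\cdot y_i\right)$ in one pass.
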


\begin{proof} This must be standard. As I was not able to find a reference, I will sketch the argument.
Let us first explain how ${\rm AJ}_{\rm alg}(Z_C)$ changes with $L_0$. The difference of the cycles for two
different $L_0$ and $L_0'$ is a cycle of the form $C\times D$ with $D$ a degree zero cycle on $C$.
Then  for any one-form $\alpha$ on $C$ one finds
${\rm AJ}_{\rm alg}(Z_C-Z_C')(C\times \alpha)=\int_{C\times\delta}[C]\times \alpha=(2g-2)\int_\delta \alpha$, where $\partial\delta=D$. Hence, as predicted by the assertion, ${\rm AJ}_{\rm alg}(Z_C-Z_C')=\ko((2g-2)\cdot D)\cong
(L_0'\otimes L_0^*)^{2g-2}$. In particular, this allows one to restrict to the case $L_0=\ko(y_0)$ for some
point $y_0\in C$.

Now let $\gamma\subset X\times C$ with $\partial \gamma=Z_C$. Then $\int_\gamma [C]\times \alpha=
\int_{\gamma\cap(C'\times C)}{\rm pr}_2^*\alpha$, where the deformation $C'$ of $C$ is chosen generic
such that $\gamma\cap(C'\times C)$ is indeed one-dimensional. Note that $C'\cap C:=\{x_1,\ldots,x_{2g-2}\}$
is a divisor in $L|_C$. The intersection of $C'\times C$ with $Z_C$ 
consists of the points $(x_i,x_i)$ and (with negative sign) $(x_i,y_0)$ and the intersection of 
$C'\times C$ with $\gamma$ 
consists of  paths connecting these points. Now project onto the second factor, which
gives paths connecting the points $x_i={\rm pr}_2(x_i,x_i)$ with $y_0={\rm pr}_2(x_i,y_0)$, i.e.\ a path with boundary $L|_C\otimes \ko((2g-2)\cdot y_0)^*$.
(Alternatively, one can pull back $\alpha$.)\end{proof}

The natural inclusion $T(X)\oplus{\rm NS}(X)\subset H^2(X,\ZZ)$ is of finite index. Hence
\begin{equation}\label{eqn:Jactralg}
J^3(X\times C)\to J^3(X\times\widetilde C)_{\rm tr}\times J^3(X\times\widetilde C)_{\rm alg}
\end{equation}
is finite of degree say $N$ (which only depends on $[C]\in H^2(X,\ZZ)$ and not on $C$). In particular,
if both, ${\rm AJ}(Z_C)_{\rm tr}\in J^3(X\times\widetilde C)_{\rm tr}$ and ${\rm AJ}(Z_C)_{\rm alg}\in J^3(X\times\widetilde C)_{\rm alg}$ vanish, then also $N\cdot{\rm AJ}(Z_C)\in J^3(X\times\widetilde C)$.

\subsection{}\label{sec:normal}

It is difficult to decide for any given curve $C\subset X$ whether ${\rm AJ}_{\rm tr}(\kappa_C)$ is non-trivial. 
However, when put in a family the resulting normal function is easier to control
and from its non-vanishing one deduces  the non-vanishing of ${\rm AJ}_{\rm tr}(\kappa_C)$ 
at least for generic $C$.
This type of argument is  standard (for hyperplane sections), but there are technical details that have
to be adjusted to our situation. The key point is eventually the algebraicity of the zero-locus of normal functions recently
established by Brosnan--Pearlstein \cite{BP} and M.\ Saito \cite{Saito}, see \cite{CharlesBourb} for a recent overview.

\medskip

We start with a positive-dimensional  family of curves $ T\subset|L|$   in a fixed linear system $|L|$ on $X$
with $L.L=2g-2$.
We assume that  all curves $C\in T_0$ parametrized by a dense open subset $T_0\subset T$
are integral with (analytically) constant  singularity type. 
Under this assumption, simultaneous  normalization
and  then compactification (for both, one may need to replace
the inclusion $T_0\subset |L|$ by a generically finite map
$T_0\to |L|$) yields a diagram
$$\xymatrix{f:\widetilde \kc|_{T_0}\ar[rd]_{p_0}\ar[r]&\kc|_{T_0}\ar[d]\ar[r]&X\\
&T_0&&}$$
with $f$ dominant and such that $p_0:\widetilde\kc|_{T_0}\to T_0$ is smooth with fibres given by
the normalizations of the curves
$\kc_t$,  $t\in T_0$. 
So, $f_t:\widetilde \kc_t\to\kc_t\subset X$ for $t\in T_0$ is exactly as in the situation $f_C:\widetilde C\to C\subset X$ considered before.

Recall that the class of the  cycle $Z_C$ on $X\times \widetilde C$ defined in (\ref{eqn:defZC}) depends on the choice of a line bundle
$L_0$ of degree one on $\widetilde C$. In order to define a global cycle that restricts to $Z_C$ on $X\times \widetilde C$
for a fibre $C=\kc_t$, $t\in T_0$, we pull-back $\kc|_{T_0}\to T_0$ to the subscheme $T_0'\subset\Pic^1(\widetilde\kc/T_0)$
of all line bundles $L_0$ on fibres $\widetilde C$ of
$\widetilde\kc|_{T_0}\to T_0$ such that $L_0^{2g-2}\cong f_C^*L$. 
Replacing $T_0$ by (some further \'etale cover of) $T_0'$,
we may assume that  $\widetilde\kc|_{T_0}\to T_0$ comes with  a line bundle $\kl_0$ on $\widetilde\kc|_{T_0}$
of degree one on all fibres $\widetilde C$ and  such that $\kl_0^{2g-2}|_{\widetilde C}\cong f_C^*L$.
Once this is achieved, one can compactify $\widetilde\kc|_{T_0}\to\kc|_{T_0}\to T_0$ to  projective
families $p:\widetilde\kc\to\kc\to T$ which still come with a morphism $f:\widetilde\kc\to\kc\to X$.

\smallskip

Next, consider the closure of the cycle 
\begin{equation}
Z_\kc:=\Delta_f-\kc|_{T_0}\times_T[\kl_0]-\{x_0\}\times\widetilde \kc|_{T_0}
\end{equation}
on $X\times\widetilde\kc$.  Here, $\Delta_f$ is the pull-back of $\Delta_X\subset X\times X$ under
${\rm id}\times f:X\times\widetilde \kc\to X\times X$. Restricted to $X\times \widetilde C$ for $C=\kc_t$, $t\in T_0$,
the cycle $Z_\kc$ yields $Z_C=\Delta_{f_C}-C\times[L_0]-\{x_0\}\times \widetilde C$ with $L_0\cong\kl_0|_{\widetilde C}$.

\medskip

We shall be interested in the normal function associated to this cycle.
Denote by $$J^3:=J^3(X\times\widetilde \kc/T_0)\to T_0$$ the family of intermediate Jacobians of
the family $$\pi:=p_0\circ pr_2:X\times\widetilde\kc|_{T_0}\to T_0$$ with fibres $X\times\widetilde\kc_t$.
Analogously, $J^3_{\rm tr}$ and $J^3_{\rm alg}$ denote the transcendental resp.\ algebraic parts
with fibres over $t\in T_0$ as described by (\ref{eqn:Jactr}) resp.\ (\ref{eqn:Jacalg}).
In particular, $$J^3_{\rm alg}\cong{\rm Pic}^0(\widetilde \kc/T_0)\to T_0.$$
The sheaf of sections of $J^3$, denoted by the same symbol, is part of the short exact sequence
\begin{equation}\label{eqn:ses}
0\to R^3\pi_*\ZZ\to \kh^3/F^2\kh^3\to J^3\to 0,
\end{equation}
where $\kh^3:=R^3\pi_*\Omega^\bullet_{X\times\widetilde\kc/T_0}$ and $F^2\kh^3:=R^3\pi_*\Omega^{\geq2}_{X\times\widetilde\kc/T_0}$.

Now, the fibrewise Abel--Jacobi classes ${\rm AJ}(Z_{\kc_t})$ induced by the global
cycle $Z_\kc$ yield the normal function $$\nu\in\Gamma(T_0,J^3),$$
i.e.\ a holomorphic section of $J^3\to T_0$, cf.\  \cite{VoisinHodge}. Similarly, its projections under
$J^3\to J^3_{\rm tr}$ and $J^3\to J^3_{\rm alg}$ are denoted $$\nu_{\rm tr}\in \Gamma(T_0,J^3_{\rm tr})
\text{ resp. }\nu_{\rm alg}\in\Gamma(T_0,J^3_{\rm alg}).$$

\begin{cor}\label{cor:trN}
Under the above choice of $Z_\kc$ and assuming $\rho(X)=1$, the algebraic part $$\nu_{\rm alg}\in\Gamma(T_0,J^3_{\rm alg})=\Gamma(T_0,{\rm Pic}^0(\widetilde\kc/T_0))$$ is trivial.
Moreover, for the zero sets of the normal functions and any $n>0$ one has
$$Z(n\cdot\nu_{\rm tr})\subset Z(n\cdot N\cdot \nu),$$ where $N$ is the degree of (\ref{eqn:Jactralg}).
\end{cor}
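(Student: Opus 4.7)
The plan is to handle the two assertions separately; both are direct consequences of Lemma \ref{lem:algpart} together with the defining property of $\kl_0$.

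For the vanishing of $\nu_{\rm alg}$, I would verify the statement fibrewise. At any $t \in T_0$, the fibre of $\nu_{\rm alg}$ is by construction the algebraic Abel--Jacobi class ${\rm AJ}_{\rm alg}(Z_{\kc_t}) \in J^3(X \times \widetilde{\kc_t})_{\rm alg} \cong \Pic^0(\widetilde{\kc_t})$. By Lemma \ref{lem:algpart} this class equals $f_{\kc_t}^{*}L \otimes (\kl_0|_{\widetilde{\kc_t}})^{-(2g-2)}$. But $\kl_0$ was constructed precisely so that $\kl_0^{2g-2}|_{\widetilde C} \cong f_C^{*}L$ on every fibre $\widetilde C = \widetilde{\kc_t}$, so this difference is trivial in $\Pic^0(\widetilde{\kc_t})$. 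Since a holomorphic section of $\Pic^0(\widetilde\kc/T_0) \to T_0$ is determined by its fibrewise values, $\nu_{\rm alg}$ must be the zero section.

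For the inclusion of zero loci, the argument is group-theoretic. Suppose $t \in Z(n \cdot \nu_{\rm tr})$, so $n \cdot \nu_{\rm tr}(t) = 0$, while $n \cdot \nu_{\rm alg}(t) = 0$ holds identically by the first part. The image of $n \cdot \nu(t) \in J^3(X \times \widetilde{\kc_t})$ under the finite map (\ref{eqn:Jactralg}) is therefore zero, so $n \cdot \nu(t)$ lies in the kernel of this map. Because that kernel is a finite group of order $N$, it is annihilated by $N$, whence $n \cdot N \cdot \nu(t) = 0$, i.e.\ $t \in Z(n \cdot N \cdot \nu)$.

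The whole argument is formal once Lemma \ref{lem:algpart} is in hand, and I do not foresee a serious obstacle. The only nuance is that one must treat (\ref{eqn:Jactralg}) as a genuine isogeny of complex tori of degree $N$, so that its finite kernel is automatically annihilated by $N$; this is built into the fact that $N$ is computed as the index of the finite-index sublattice $T(X) \oplus {\rm NS}(X) \subset H^2(X,\ZZ)$.
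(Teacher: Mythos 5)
Your proposal is correct and follows exactly the paper's route: the vanishing of $\nu_{\rm alg}$ is read off fibrewise from Lemma \ref{lem:algpart} together with the normalization $\kl_0^{2g-2}|_{\widetilde C}\cong f_C^*L$, and the inclusion of zero loci comes from the fact that the finite kernel of the degree-$N$ map (\ref{eqn:Jactralg}) is annihilated by $N$. The paper's own proof is just a terser version of the same two observations.
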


\begin{proof} The first part is an immediate consequence of Lemma \ref{lem:algpart}. For the second, recall that $n\cdot{\rm AJ}_{\rm tr}(Z_C)=0$ and $n\cdot{\rm AJ}_{\rm alg}(Z_C)=0$ imply $N\cdot n\cdot {\rm AJ}(Z_C)=0$.
\end{proof}

Clearly,  for $t\in T_0$ not contained in the zero locus of
$Z(N\cdot \nu)$ the Abel--Jacobi class $N\cdot{\rm AJ}(Z_C)$ and hence $N\cdot {\rm AJ}_{\rm tr}(Z_C)$ of $C:=\kc_t$ are non-trivial.
In order to  prove $N\cdot \nu\ne0$ and thus $T_0\setminus Z(N\cdot\nu)\ne\emptyset$, one describes its image under the boundary 
map  of (\ref{eqn:ses})
$$\delta:\Gamma(T_0,J^3)\to H^1(T_0,R^3\pi_*\ZZ)$$
and shows that it is actually non-trivial. Note that $R^3\pi_*\ZZ= H^2(X,\ZZ)\otimes R^1p_{0*}\ZZ$.

\begin{lem}\label{lem:boundary}
The boundary class $\delta(\nu)\in H^1(T_0,R^3\pi_*\ZZ)$ is non-torsion.
\end{lem}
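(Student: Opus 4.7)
The plan is to identify $\delta(\nu)$ as a Leray-filtration component of $[Z_\kc] \in H^4(X \times \widetilde\kc|_{T_0}, \ZZ)$ and to verify its non-triviality modulo torsion via the transcendental part of $H^2(X)$.

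Since $\pi$ factors through $\widetilde\kc|_{T_0}$ via the second projection and $H^1(X) = H^3(X) = 0$ for a K3 surface, the K\"unneth formula gives $R^3\pi_*\ZZ \cong H^2(X,\ZZ) \otimes R^1 p_{0*}\ZZ$, hence $H^1(T_0, R^3\pi_*\ZZ) = H^2(X,\ZZ) \otimes H^1(T_0, R^1 p_{0*}\ZZ)$. By the standard compatibility between the connecting map of (\ref{eqn:ses}) and the Leray spectral sequence, fibrewise homological triviality of $Z_{\kc_t}$ places $[Z_\kc]$ in the Leray filtration piece $L^1 H^4$, and $\delta(\nu)$ is precisely its image in $E_\infty^{1,3} \hookrightarrow H^1(T_0, R^3\pi_*\ZZ)$. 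Among the three summands of $Z_\kc = \Delta_f - \kc|_{T_0} \times_{T_0}[\kl_0] - \{x_0\} \times \widetilde\kc|_{T_0}$, only the graph $\Delta_f$ contributes to the transcendental projection $T(X)_\QQ \otimes H^1(T_0, R^1 p_{0*}\QQ)$, since the $H^2(X)$-factors of the other two lie in ${\rm NS}(X)$.

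By Poincar\'e duality on $X$, the $H^2(X) \otimes H^2(\widetilde\kc|_{T_0})$-K\"unneth component of $[\Delta_f]$ is the class of $f^*: H^2(X) \to H^2(\widetilde\kc|_{T_0})$. Post-composing with the Leray projection onto $H^1(T_0, R^1 p_{0*}\ZZ)$ gives a morphism $\phi: H^2(X,\ZZ) \to H^1(T_0, R^1 p_{0*}\ZZ)$ whose associated element of $H^2(X,\ZZ) \otimes H^1(T_0, R^1 p_{0*}\ZZ)$ coincides, modulo the algebraic corrections from Lemma \ref{lem:algpart}, with $\delta(\nu)$. It therefore suffices to prove $\phi|_{T(X)_\QQ} \neq 0$.

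This is a standard Griffiths-Voisin infinitesimal invariant computation: the derivative $\nabla_\xi \nu_{\rm tr}$ at a generic $t\in T_0$ along a first-order deformation $\xi$ of $C_t$ inside $|L|$ is computed, for $\alpha \in T(X)_\CC$, via the cup product ${\rm KS}(\xi) \cup \alpha|_{\widetilde C_t}$ against the extension $0 \to \Omega_{\widetilde C_t} \to \Omega_X|_{\widetilde C_t} \to N^*_{\widetilde C_t/X} \to 0$, and lands in the tangent space of $J^3_{\rm tr}$ at $\nu_{\rm tr}(t)$. The main obstacle is to show this cup product does not vanish identically in $(\alpha,\xi)$: this uses surjectivity of the Kodaira-Spencer map (which holds because $|L|$ is ample and $T_0$ dominates an open subset of $|L|$), combined with a Torelli-type non-degeneracy ensuring some $\alpha \in T(X)_\QQ$ pairs non-trivially with the above normal-bundle extension. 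Non-vanishing in de Rham promotes to $\phi(\alpha)\otimes\QQ \neq 0$ for some $\alpha$, so $\phi(\alpha)$ is non-torsion, and hence so is $\delta(\nu)$.
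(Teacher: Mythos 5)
Your opening moves match the paper's: you identify $\delta(\nu)$ with the Leray component $d[Z_\kc]\in H^1(T_0,R^3\pi_*\ZZ)$ of the cohomology class $[Z_\kc]$, use $R^3\pi_*\ZZ\cong H^2(X,\ZZ)\otimes R^1p_{0*}\ZZ$, and correctly observe that only $\Delta_f$ contributes to the $T(X)$-part, so that everything reduces to showing that the induced map $\phi:T(X)\to H^1(T_0,R^1p_{0*}\ZZ)$ is non-zero. Up to that point the reduction is sound (though you should, as the paper does, first restrict to a one-dimensional $T$, and note that $E^{1,3}_\infty$ is a priori a \emph{quotient} of $H^1(T_0,R^3\pi_*\ZZ)$, not a subgroup -- harmless here only because $T_0$ is an open curve, so that $H^i(T_0,-)=0$ for $i\geq 2$ and hence $L^2H^4=0$).

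The genuine gap is in how you propose to prove $\phi|_{T(X)}\neq 0$. You replace a topological statement by a Griffiths--Voisin infinitesimal invariant computation, and this fails on two counts. First, it proves the wrong thing: the infinitesimal invariant $\nabla_\xi\nu_{\rm tr}$ and the topological class $\delta(\nu)$ are independent invariants of a normal function. Over a contractible base one has $H^1(T_0,R^3\pi_*\ZZ)=0$, hence $\delta(\nu)=0$, while the infinitesimal invariant can perfectly well be non-zero; conversely $\delta(\nu)$ can be non-torsion with vanishing infinitesimal invariant. So ``non-vanishing in de Rham promotes to $\phi(\alpha)\otimes\QQ\neq0$'' is a non sequitur: the derivative of $\nu$ computes a class in a quotient of $F^1\kh^3/F^2\kh^3\otimes\Omega_{T_0}$, not the monodromy-theoretic class $\phi(\alpha)\in H^1(T_0,R^1p_{0*}\ZZ)$. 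Second, even taken on its own terms, the computation is not carried out: the asserted ``Torelli-type non-degeneracy'' of the pairing of $T(X)$ against the normal-bundle extension of $\widetilde C_t$ is exactly the hard content, and there is no reason offered why it holds for an arbitrary stratum $T_0$ of constant singularity type.

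The paper's argument at this juncture is purely topological and more elementary. The $H^2\otimes H^2$ K\"unneth component of $(2g-2)[Z_\kc]$ is pulled back from $(2g-2)[\Delta_X]-[C]\times[C]$, which is $(2g-2)\cdot{\rm id}\in[C]^\perp\otimes[C]^\perp$; since $f:\widetilde\kc\to X$ is dominant, $T(X)\hookrightarrow H^2(\widetilde\kc,\ZZ)$ is injective, and it stays injective after restricting to $\widetilde\kc|_{T_0}$ because the kernel of that restriction is spanned by boundary divisor classes, which cannot meet the irreducible level-two Hodge structure $T(X)$. Fibrewise homological triviality puts the class in $L^1H^4$, and for $\dim T_0=1$ the kernel of $d:L^1H^4\to H^1(T_0,R^3\pi_*\ZZ)$ is a quotient of $H^2(T_0,R^2\pi_*\ZZ)=0$; applying the same argument to all multiples $N\cdot Z_\kc$ gives non-torsion, with no variation of Hodge structure needed. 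You should replace your last paragraph by an argument of this kind.
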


\begin{proof} We can restrict to the case $\dim(T)=1$.

-- One first proves that $[Z_\kc]$ is a non-trivial class in $[C]^\perp\otimes H^2(\widetilde\kc|_{T_0},\ZZ)\subset H^4(X\times\widetilde\kc|_{T_0},\ZZ)$.

Note that the cohomology class $(2g-2)\cdot [Z_\kc]\in H^2(X,\ZZ)\otimes H^2(\widetilde\kc,\ZZ)$ is the
pull-back of $(2g-2)\cdot[\Delta_X]-[C]\times[C]\in H^2(X,\ZZ)\otimes H^2(X,\ZZ)$,
which is in fact the non-trivial class  $(2g-2)\cdot{\rm id}\in[C]^\perp\otimes [C]^\perp$ (cf.\ Remark \ref{rem:comp}).
(The argument is easily adapted to the case $g=1$.)
Since $\widetilde\kc\to X$ is dominant, the pull-back $T(X)\subset[C]^\perp\,\hookrightarrow H^2(\widetilde\kc,\ZZ)$ is injective.
The kernel of $H^2(\widetilde\kc)\to H^2(\widetilde\kc|_{T_0})$ is spanned by the divisor classes of the boundary
$\widetilde\kc\setminus\widetilde\kc|_{T_0}$ and, as $T(X)$ is an irreducible Hodge structure
of level two,  intersects $T(X)$ trivially. 

\smallskip
-- The further restriction to the fibres $\widetilde\kc_t$, $t\in T_0$, is trivial, i.e.\ under
$$[C]^\perp\otimes H^2(\widetilde\kc|_{T_0},\ZZ)\subset H^4(X\times\widetilde\kc|_{T_0},\ZZ)\to H^0(T_0,R^4\pi_*\ZZ)$$
the class $[Z_\kc]$ vanishes. This is just rephrasing that the $Z_C$ are cohomologically trivial cycles.

-- The Leray spectral sequence for $\pi:X\times\widetilde\kc|_{T_0}\to T_0$ yields
a natural (surjective) map
$$d:{\rm Ker}\left(H^4(X\times\widetilde\kc|_{T_0},\ZZ)\to H^0(T_0,R^4\pi_*\ZZ)\right)\to H^1(T_0,R^3\pi_*\ZZ),$$
the kernel of which is a quotient of the trivial $$H^2(T_0,R^2\pi_*\ZZ)=\left(H^2(X,\ZZ)\otimes H^2(T_0,p_{0*}\ZZ)\right)
\oplus \left(H^0(X,\ZZ)\otimes H^2(T_0,R^2p_{0*}\ZZ)\right)=0.$$
Here one uses that for dimension reasons all $H^i(T_0,R^j\pi_*\ZZ)$ are trivial for $i>2$ and also
$H^2(T_0,p_{0*}\ZZ)\cong H^2(T_0,R^2p_{0*}\ZZ)=0$ (after shrinking $T_0$, if necessary).

This yields a non-trivial  class $$d[Z_\kc]\in H^1(T_0,R^3\pi_*\ZZ)$$ as the image of $[Z_\kc]$.
Since all the arguments above apply as well to multiples
$N\cdot Z_\kc$, the class $d[Z_\kc]$ is in fact non-torsion.

-- Eventually, one uses the fact that the class  $d[Z_\kc]$ is indeed $\delta(\nu_Z)$. See \cite{VoisinHodge}.
\end{proof}

\begin{cor}\label{cor:finite}
Under the above assumptions, there exist at most finitely many points $t\in T_0$ with
${\rm AJ}_{\rm tr}(Z_{\kc_t})=0$.
\end{cor}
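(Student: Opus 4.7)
The plan is to combine Lemma \ref{lem:boundary} with the algebraicity theorem for admissible normal functions of Brosnan--Pearlstein \cite{BP} and M.~Saito \cite{Saito}, exactly along the lines laid out in Section \ref{sec:normal}. As in the proof of Lemma \ref{lem:boundary}, I may reduce to the case $\dim T = 1$, so $T_0$ becomes a smooth irreducible quasi-projective curve; it then suffices to show that the zero locus $Z(\nu_{\rm tr}) \subset T_0$ is a proper subset, because any proper algebraic subset of a smooth curve is finite.

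First I would check that $\nu_{\rm tr} \in \Gamma(T_0, J^3_{\rm tr})$ is not identically zero. Suppose it were. Combined with $\nu_{\rm alg} \equiv 0$ from Corollary \ref{cor:trN} and the finite isogeny (\ref{eqn:Jactralg}) of degree $N$, this would force $N \cdot \nu$ to be the zero section of $J^3$, hence $N \cdot \delta(\nu) = \delta(N \cdot \nu) = 0$. This contradicts Lemma \ref{lem:boundary}, which asserts that $\delta(\nu)$ is non-torsion. Alternatively, the proof of that lemma actually places $\delta(\nu)$, up to finite index, in the transcendental summand $T(X) \otimes R^1 p_{0*}\ZZ$ of $R^3 \pi_* \ZZ$ (because for $\rho(X) = 1$ one has $T(X) = [C]^\perp$ up to finite index), and so $\delta(\nu_{\rm tr})$ is non-torsion on the nose.

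Next I would invoke \cite{BP, Saito}: the zero locus of an admissible normal function on a smooth complex variety is a closed algebraic subvariety of the base. Applied here, either directly to $\nu_{\rm tr}$ or, via the inclusion $Z(\nu_{\rm tr}) \subset Z(N \cdot \nu)$ from Corollary \ref{cor:trN}, to $\nu$ itself, one obtains that the relevant zero locus is an algebraic subset of the curve $T_0$. Since the normal function in question is not identically zero by the previous step, the zero locus is proper and therefore finite. Unwinding the definition, $t \in Z(\nu_{\rm tr})$ is exactly the condition ${\rm AJ}_{\rm tr}(Z_{\kc_t}) = 0$, so the corollary follows.

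The main obstacle is verifying the admissibility hypothesis needed for \cite{BP, Saito}. For the full normal function $\nu$ this is the standard admissibility of normal functions arising from a relative algebraic cycle on a smooth projective family over the quasi-projective base $T_0$, so the indirect approach via $Z(\nu_{\rm tr}) \subset Z(N \cdot \nu)$ bypasses any additional concern. If one prefers to apply the theorem to $\nu_{\rm tr}$ directly, one must observe that the variation $T(X) \otimes R^1 p_{0*}\ZZ$ is a direct summand, up to finite index, of the admissible variation $R^3 \pi_* \ZZ$, and that admissibility is inherited by such summands. This passage is routine but the only step where some care is required.
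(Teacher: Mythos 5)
Your proposal is correct and follows essentially the same route as the paper: algebraicity of $Z(N\cdot\nu)$ via Brosnan--Pearlstein and Saito, the containment $Z(\nu_{\rm tr})\subset Z(N\cdot\nu)$ from Corollary \ref{cor:trN}, and non-vanishing of the boundary class from Lemma \ref{lem:boundary} to rule out the normal function vanishing identically. The only cosmetic difference is the order of steps: you reduce to $\dim T=1$ up front, whereas the paper first invokes algebraicity and then applies Lemma \ref{lem:boundary} to any putative positive-dimensional component of $Z(N\cdot\nu)$ --- these amount to the same argument, since your reduction is only legitimate because the algebraicity statement lets one test finiteness on curves lying inside the zero locus.
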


\begin{proof}
Here one uses that the vanishing locus $Z(N\cdot \nu)\subset T_0$ of the normal function $N\cdot\nu\in \Gamma(T_0,J^3)$ is 
an algebraic set due to \cite{BP,Saito}. Thus, if $Z(N\cdot\nu)$ is zero-dimensional, then it can only be a finite set of points and hence also
$Z(\nu_{\rm tr})$ is finite by Corollary \ref{cor:trN}. If $Z(N\cdot\nu)$ has positive
dimensional components, then repeat the above argument with $T_0$ replaced by such a component.  But over the new $T_0$
the class $N\cdot \delta [Z_\kc]$ would be trivial, contradicting that $\delta[Z_\kc]$ is non-torsion by Lemma \ref{lem:boundary}. 
\end{proof}

\begin{remark}\label{rem:rho>1}
For simplicity we assumed $\rho(X)=1$ in the above discussion. The case $\rho(X)>1$ is dealt with similarly, either
by replacing $H^2(X,\ZZ)$ throughout by $T(X)\oplus\ZZ\cdot[C]$ or by observing that the arguments above
prove directly that $\nu_{\rm tr}\ne0$ (without controlling the algebraic part) or by working with the cycle $Z_X$ in
Remark \ref{rem:comp}, i).
\end{remark}

\subsection{}
To conclude the proof of Proposition \ref{prop:finite}, one stratifies $|L|$ according to the singularity types of the curve and their number of integral components. Since eventually there are only finitely many strata and for each stratum finiteness of curves
$C:=\kc_t$ with $n\cdot {\rm AJ}_{\rm tr}(\kappa_C)=0$ is assured by Corollary \ref{cor:finite},  the
proposition then follows from Corollary \ref{cor:kappaAJ}.

\subsection{}\label{sec:disc}
 Suppose $\kc\subset\kx\to T$ is a flat family of curves $\kc_t$ in K3 surfaces $\kx_t$. Standard arguments prove that 
the locus of $t\in T$ for which $\kc_t\subset\kx_t$ is a constant cycle curve is a countable union of Zariski closed subsets of $T$.
Indeed, by using that the (relative) Hilbert scheme (of the relative symmetric products)
is a countable union of projective schemes over $T$ one proves that
the set of points $x\in\kc_t$ with $[x]=c_{\kx_t}$ is a countable union of Zariski closed subsets (see e.g.\ \cite[Ch.\ 22.1]{VoisinHodge}).

Without bounding the order, the result cannot be improved (cf.\ Lemma \ref{lem:inffibre}). For the trivial family
$\kx=X\times T$, Proposition \ref{prop:finite} proves that the locus of $t\in T$ with $\kc_t\subset X$ a constant cycle curve
of order $\leq n$ is Zariski closed. However, the Hodge theoretic line of reasoning seems not to extend to
non-trivial families $\kx\to T$. Only the locus of $t\in T$ with $n\cdot{\rm AJ}_{\rm tr}(\kappa_{\kc_t})=0$ describes a Zariski
closed subset.


\section{First examples of constant cycle curves}\label{sec:exas}

It is notoriously difficult to construct rational curves on K3 surfaces, see Section \ref{sec:BMMM}. As constant cycle
curves are natural generalizations of rational curves, in particular with respect to (\ref{eqn:BVmostrelevant}),
it seems worthwhile to work out examples. On the one hand, it will become clear that
constant cycle curves are much easier to construct than rational curves, at least if we do not
care about their order. On the other hand, the powerful technique developed in \cite{BHT,LL} to prove existence
of rational curves by reducing modulo $p$ and then using Tate's conjecture  does not seem
to apply to constant cycle curves. The main reason being that a curve could be a constant cycle
curve modulo infinitely many primes without being a constant cycle curve itself. In fact, 
all curves over $\bar\FF_p$  should be constant cycle curves (see Section \ref{sec:finitefields}), 
but not over $\bar\QQ$. 

If not stated otherwise, $X$ will be a K3 surface over an arbitrary algebraically closed field $k$.

\subsection{}
As mentioned before, all rational curves $C\subset X$ are pointwise constant cycle curves. In fact,
we have
\begin{lem}\label{sec:ratcurves}
A rational curve $C\subset X$ in a K3 surface is a constant cycle curve of order one.
\end{lem}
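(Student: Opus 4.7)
The plan is to verify condition (ii) of Lemma \ref{lem:BVclass} with $n=1$, i.e.\ to show directly that $[\eta_C] = (c_X)_{k(\eta_C)}$ in $\Ch^2(X \times_k k(\eta_C))$, which by definition is equivalent to $\kappa_C = 0$ and hence to $C$ being a constant cycle curve of order one.

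First I would pass to the normalization. Since $C$ is (integral and) rational, $\widetilde{C} \cong \PP^1$, so $k(\eta_C) = k(\eta_{\widetilde{C}}) = k(t)$, and the closed point $\eta_C \in X_{k(\eta_C)}$ is the image of the generic point $\eta \in \widetilde{C}$ (viewed as a closed $k(t)$-rational point of $\PP^1_{k(t)}$) under the base change of $f_C \colon \widetilde{C} \to C \hookrightarrow X$.

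The core step is the following: pick any $k$-rational point $y_0 \in \PP^1(k)$ and consider the induced $k(t)$-rational point $y_0 \times k(t) \in \PP^1_{k(t)}$. Both $\eta$ and $y_0 \times k(t)$ are divisors of degree one on $\PP^1_{k(t)}$, so their classes agree in $\Ch_0(\PP^1_{k(t)}) = \ZZ$. Pushing forward under $(f_C)_{k(t)} \colon \PP^1_{k(t)} \to X_{k(t)}$ gives the equality $[\eta_C] = [f_C(y_0)]_{k(t)}$ in $\Ch^2(X_{k(t)})$. Now $f_C(y_0)$ is a closed point on the rational curve $C \subset X$, so by the Beauville--Voisin property recalled in (\ref{eqn:BVmostrelevant}) we have $[f_C(y_0)] = c_X$ in $\Ch^2(X)$. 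Combining these yields $[\eta_C] = (c_X)_{k(\eta_C)}$, which is exactly what was needed.

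There is no real obstacle: the whole argument collapses to the rationality of $\Ch_0(\PP^1_{k(t)})$ together with the fact that all closed points of $C$ realize the class $c_X$. The only thing to watch is the interplay between $\widetilde{C}$ and $C$ in the definition of $\kappa_C$, but since push-forward along $f_C$ commutes with base change, no refinement of the argument is needed. In particular, the proof goes through over any algebraically closed field of any characteristic where the Beauville--Voisin class is available on rational curves.
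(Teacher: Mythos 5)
Your proof is correct, and it takes a slightly different, more hands-on route than the paper. The paper computes the entire Chow group of the generic fibre: combining the localization sequence (\ref{eqn:loc}) with $\Ch^2(X\times\PP^1)\cong\Ch^2(X)\oplus\Ch^1(X)\cdot h$, it identifies $\Ch^2(X\times k(\eta_C))\cong\Ch^2(X)$, which is torsion free, so that the torsion class $\kappa_C$ must vanish. You instead compute only the single class $[\eta_C]$: you push forward the relation $[\eta]=[y_0\times k(t)]$ in $\Pic(\PP^1_{k(t)})\cong\ZZ$ along $(f_C)_{k(t)}$ and then invoke (\ref{eqn:BVmostrelevant}) to identify $[f_C(y_0)]$ with $c_X$, verifying condition ii) of Lemma \ref{lem:BVclass} with $n=1$. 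Both arguments ultimately rest on the same two inputs --- $\Pic(\PP^1_K)\cong\ZZ$ over an arbitrary field $K$, and the Beauville--Voisin fact that closed points on rational curves realize $c_X$ (the paper needs the latter too, implicitly, to see that the class of $\Ch^2(X)$ pulling back to $[\eta_C]$ is $c_X$ rather than some other degree-one class). What your version buys is economy: it avoids the localization sequence and the product formula entirely. What it gives up is the stronger intermediate statement that the whole restriction map $\Ch^2(X)\to\Ch^2(X\times k(\eta_C))$ is an isomorphism, which is what makes the paper's torsion-freeness argument immediate. Two minor points to watch: the lemma as stated allows reducible rational curves, so one should apply your argument to each integral component separately (the order of a reducible constant cycle curve being the maximum over components); and your closing remark about positive characteristic is consistent with the paper's standing convention (see the footnote in Section \ref{sec:pccc}) that the relevant property of $c_X$ on rational curves is assumed there as well.
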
 
\begin{proof}
This is easy to see by using (\ref{eqn:loc}), which together with $\Ch^2(X\times\PP^1)\cong
\Ch^2(X)\oplus\Ch^1(X)\cdot h$, where $h$ is the hyperplane section on $\PP^1$, shows $\Ch^2(X)\cong\Ch^2(X\times U)$ for every proper non-empty open subset $U\subset\PP^1$.
Hence $\Ch^2(X\times{k(\eta_C)})\cong\Ch^2(X)$, which is torsion free.
\end{proof}

\subsection{}\label{sec:BVexas} We shall discuss a construction inspired by 
\cite{VoisinHOG}. In particular, as remarked in \cite[Lem.\ 2.3]{VoisinHOG},
it shows that the union of all constant cycle curves is dense (and even in the classical topology for complex K3 surfaces).

Consider  an elliptic K3 surface with a zero-section
$$\pi:X\to\PP^1,~C_0\subset X.$$ We first  give an ad hoc and geometric description of the constant cycle curves of torsion points on the fibres.
For this assume that $k$ is uncountable. The formal definition for arbitrary algebraically closed field $k$, which is also needed to 
determine the order of $C_n$, is given below. 
Let $$C_n\subset X$$ be the closure of the set of $n$-torsion points on the smooth fibres $X_t$, $t\in\PP^1$.
Let $x_t\in X_t$ denote the origin, i.e.\ $\{x_t\}=C_0\cap X_t$.
Thus, for any $x\in C_n\cap X_t$ the class $[x]-[x_t]$ is $n$-torsion  in $\Ch^1(X_t)$ and hence trivial in $\Ch^2(X)$. Hence, $C_n$ is a pointwise constant cycle curve and by Proposition
\ref{prop:Voisin} in fact a constant cycle curve.

For arbitrary $k$ (as always algebraically closed) these curves (or rather their irreducible components) are constructed as follows: Let $\mu\in\PP^1$ denote the generic point
and $X_\mu$ the generic fibre of $\pi$. For $x\in X_\mu$ with $k(x)/k(\mu)$ finite, let $C_x\subset X$ be the curve obtained as the closure of the point $x\in X_\mu\subset X$. In particular, the generic point $\eta_{C_x}\in C_x$ is just $x\in X_\mu$ and 
 $k(\eta_{C_x})=k(x)$. The diagonal of $C_x$ as a subvariety $\Delta_{C_x}\subset X\times C_x$ is in fact contained
in the surface $X\times_{\PP^1}C_x$. Hence, its class $[\Delta_{C_x}]\in\Ch^2(X\times C_x)$ is the push-forward under
$$\Ch^1(X\times_{\PP^1}C_x)\to\Ch^2(X\times C_x)$$ of the class of the relative diagonal $\Delta_{C_x/\PP^1}\subset X\times_{\PP^1}C_x$. Restricting to the generic point $\eta_{C_x}$ shows that $[\Delta_{C_x}|_{X\times k(\eta_{C_x})}]\in\Ch^2(X\times k(\eta_{C_x}))$
is the push-forward of the class of the point $x\in X_\mu$ in $\Ch^1(X\times_{\PP^1}k(x))=\Ch^1(X_\mu\times_{k(\mu)}k(x))$.

This observation can now be applied to the origin  $x=o\in X_\mu$, i.e.\ the intersection of $C_0$ with $X_\mu$, and
any point $x_n\in X_\mu$ of order $n$. For the latter the associated curve $$C_{x_n}:=\overline{\{x_n\}}\subset X$$ 
 is an irreducible component of the curve $C_n$ of  $n$-torsion points in the fibres as considered above.

\begin{lem}\label{lem:ccctorsion}
i) The curve $C_{x_n}$ is a constant cycle curve of order $d| n$.

ii) The union of all $C_{x_n}$ is dense and, in case $k=\CC$, even dense in the classical topology.
\end{lem}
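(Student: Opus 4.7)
The plan is to compute $\kappa_{C_{x_n}}$ explicitly using the preceding paragraph's identification of the restriction of $[\Delta_{C_x}]$ to the generic fibre as a pushforward from $\Ch^1(X_\mu \times_{k(\mu)} k(x))$.

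For (i), I would first identify $\eta_{C_{x_n}}$, viewed as a closed point of $X_{k(x_n)}$, with the $k(x_n)$-rational point obtained from $x_n$; this point lies on the closed subvariety $E_{k(x_n)} := X_\mu \times_{k(\mu)} k(x_n) \hookrightarrow X \times_k k(x_n)$, which is the base-change of the generic elliptic fibre. The origin $o = C_0 \cap X_\mu$ base-changes to a $k(x_n)$-rational point $o_{k(x_n)}$ of $E_{k(x_n)}$ (also lying on $(C_0)_{k(x_n)}$). Since $x_n$ is an $n$-torsion point of the elliptic curve $X_\mu$, the class $[x_n] - [o]$ is $n$-torsion in $\Pic^0(X_\mu)$, and hence so is its base change in $\Pic^0(E_{k(x_n)}) \subset \Ch^1(E_{k(x_n)})$. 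Pushing forward via the closed immersion $E_{k(x_n)} \hookrightarrow X_{k(x_n)}$ (the pushforward already used in the paragraph preceding the lemma), I obtain
\[
n \cdot [\eta_{C_{x_n}}] = n \cdot [o_{k(x_n)}] \quad \text{in } \Ch^2(X_{k(x_n)}).
\]
Because $C_0$ is a section of $\pi$, adjunction forces $C_0 \cong \PP^1$; hence every closed $k$-point on $C_0$ represents $c_X$ by Beauville--Voisin. Since $o_{k(x_n)}$ and the base change of any such $k$-point are both $k(x_n)$-rational points of $(C_0)_{k(x_n)} \cong \PP^1_{k(x_n)}$, they are rationally equivalent, so $[o_{k(x_n)}] = (c_X)_{k(x_n)}$. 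Combining with the displayed equation and applying Lemma \ref{lem:BVclass}, the curve $C_{x_n}$ is a constant cycle curve of some order $d \mid n$.

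For (ii), the key observation is that $C_{x_n}$ meets every smooth fibre $X_t$ in a subset of the $n$-torsion $X_t[n]$, and conversely every torsion point of every smooth fibre lies on some $C_{x_n}$. For any elliptic curve $E$ over an algebraically closed field $k$, the prime-to-${\rm char}(k)$ torsion is infinite and hence Zariski-dense in $E$; since the smooth fibres cover a Zariski-dense open subset of $X$, the union $\bigcup_n C_{x_n}$ is Zariski-dense. For $k = \CC$, each smooth fibre is a complex torus whose torsion subgroup is classically dense, and the surjectivity of $\pi: X \to \PP^1$ upgrades this to density in the classical topology of $X$.

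The only non-formal point is the scheme-theoretic identification of $\eta_{C_{x_n}}$ as a $k(x_n)$-rational point of the base-changed fibre $E_{k(x_n)}$, together with the transport of the $n$-torsion relation across the push-forward $\Ch^1(E_{k(x_n)}) \to \Ch^2(X_{k(x_n)})$; the density in (ii) is then routine.
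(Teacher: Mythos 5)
Your proof is correct and takes essentially the same route as the paper: both identify $\kappa_{C_{x_n}}$ (equivalently $[\eta_{C_{x_n}}]-[o_{k(x_n)}]$) with the push-forward of the $n$-torsion class $[x_n]-[o]$ from $\Ch^1(X_\mu\times_{k(\mu)}k(x_n))$ into $\Ch^2(X\times_k k(x_n))$, the paper handling the constant correction term via the computation of $[\Delta_{C_0}]$ restricted to the generic fibre, where you instead prove $[o_{k(x_n)}]=(c_X)_{k(x_n)}$ using $C_0\cong\PP^1$ and then invoke Lemma \ref{lem:BVclass}. One small wording slip: $[x_n]-[o]$ is a degree-zero ($n$-torsion) class only after base change to $k(x_n)$, not in $\Pic^0(X_\mu)$ itself when $\deg(k(x_n)/k(\mu))>1$; since you immediately pass to the canonical $k(x_n)$-rational point of $E_{k(x_n)}$, this does not affect the argument.
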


\begin{proof} The density statement ii) is obvious, as the set of geometric torsion points is dense in the generic fibre $X_\mu$.

For i), note first that $[\Delta_{C_0}]\in\Ch^2(X\times C_0)=\Ch^2(X\times\PP^1)$
is the class of $\{x_0\}\times C_0+C_0\times \{x_0\}$ (for an arbitrary point $x_0\in C_0$), which restricted to $X\times k({o})$
is $[x_0]\times k(o)$. Thus, the restriction of the class of $\Delta_{C_{x_n}}-\{x_0\}\times C_{x_n}$ to $X\times k(\eta_{C_{x_n}})$, which
by definition is $\kappa_{C_{x_n}}$, is the image of $[x_n-o]\in\Ch^1(X_\mu\times_{k(\mu)}k(x_n))$. Here, since
$k(o)=k(\mu)$, the class $[o]\in\Ch^1(X_\mu)$ can be base changed to a class in $\Ch^1(X_\mu\times_{k(\mu)}k(x_n))$.

As $x_n\in X_\mu$ is an $n$-torsion point, the class $[x_n-o]$ is $n$-torsion and hence also its image $\kappa_{C_{x_n}}$.
\end{proof}

\begin{remark} Note that it could happen that $C_{x_n}$ is of order $d<n$, e.g.\ when $x_n$ is a $k(\mu)$-rational point
and, therefore, $C_{x_n}$ a rational curve. But presumably in the generic situation $\kappa_{C_{x_n}}$ is of order exactly $n$.
 In any case, this elementary construction already provides
many examples of constant cycle curves which are not rational.
\end{remark}

The construction will now be generalized to covering families of elliptic curves, still following \cite{BV,VoisinHOG}:
Firstly, the existence of nodal rational curves on the generic K3 surface leads to the
existence of a dominating family of elliptic curves for every K3 surface. More precisely, for
an arbitrary K3 surface $X$ there exists a smooth elliptic surface $\kc\to T$ with a surjective morphism $p:\kc\to X$.
See  \cite[Thm.\ 4.1]{HassTsch} for details in the case of characteristic zero.
In positive characteristic use a lift to characteristic zero and reduce the family of curves
back to $p$ which yields a family of elliptic curves if $X$ is not unirational. 
The unirational case being trivial, we shall just ignore it.  Note that then, 
one could
moreover assume that the generic fibre $\kc_t$ is mapped birationally onto its image in $X$, but we will not need this. 

Now, choose an ample rational curve $C_0\subset X$ and  let 
$\widetilde C_0$ be its preimage in $\kc$. Replacing $\kc\to T$ by its base change to  $\widetilde C_0$,
we can assume that $\kc\to T$ admits a section $\sigma_0:T\to\kc$ such that $\sigma_0(T)$ maps
onto a rational curve in $X$, namely $C_0$. Then
consider the curve $C_n\subset X$  defined as the closure of the set of  images of all points
$x\in\kc_t$ in the smooth fibres $\kc_t$, $t\in T$, such that
$n\cdot([x]-[x_{t}])=0$ in $\Ch^1(\kc_t)$, where  $x_{t}$
is the point of intersection of $\kc_t$ with  $\sigma_0(T)$. 
For the same reason as before, the curve $C_n$ is a (possibly reducible) pointwise constant cycle curve.
Note that indeed any torsion point in a smooth fibre $\kc_t$ is contained in a multi-section of
$\kc\to T$ consisting of torsion points in the smooth fibres. 
Also, the arguments in the proof of Lemma \ref{lem:ccctorsion} still apply. 

The following was explicitly stated already in \cite{VoisinHOG} and was also used in \cite{ML}.
It should be compared to Conjecture \ref{conj:ccc}.
\begin{cor} On any K3 surface $X$ over an arbitrary algebraically closed field $k$ the union $$\bigcup_{C =\rm ccc} C\subset X$$
of constant cycle curves (of unbounded order) is dense. For $k=\CC$ density holds in the classical topology.\qed
\end{cor}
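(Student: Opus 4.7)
My plan is to deduce the corollary directly from the machinery set up in the preceding paragraphs, namely the dominating family $p:\kc\to X$ of elliptic curves equipped with a section $\sigma_0:T\to\kc$ whose image maps to the ample rational curve $C_0\subset X$, together with the families $C_n\subset X$ obtained by closing up the $n$-torsion loci in the smooth fibres of $\kc\to T$.

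First I would verify that for every $n\geq 1$ each irreducible component of $C_n$ is a constant cycle curve. This is entirely parallel to the proof of Lemma \ref{lem:ccctorsion}, with the one-parameter family $\pi:X\to\PP^1$ replaced by $\kc\to T$ and the zero section $C_0\subset X$ replaced by $\sigma_0(T)\subset\kc$. The key input is that $\sigma_0(T)$ maps to the rational curve $C_0\subset X$, so that points on $\sigma_0(T)$ all realize the Beauville--Voisin class $c_X$ in $\Ch^2(X)$ by (\ref{eqn:BVmostrelevant}). Hence for any point $x\in\kc_t$ with $n\cdot([x]-[x_t])=0$ in $\Ch^1(\kc_t)$, the image $p(x)\in X$ satisfies $n\cdot([p(x)]-c_X)=0$ in $\Ch^2(X)$; since $\Ch^2(X)$ is torsion free (as $k$ is algebraically closed), this gives $[p(x)]=c_X$, so each component of $C_n$ is a pointwise constant cycle curve and Proposition \ref{prop:Voisin} converts this to the scheme-theoretic notion after extending to a universal domain and descending via Lemma \ref{lem:ccc}, i).

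Next I would establish density. The union $\bigcup_n C_n\subset X$ equals $p\left(\bigcup_n \widetilde C_n\right)$, where $\widetilde C_n\subset\kc$ is the closure of the $n$-torsion locus in the smooth fibres. Since $p:\kc\to X$ is dominant (in fact surjective), it suffices to see that $\bigcup_n \widetilde C_n$ is dense in $\kc$. But each smooth fibre $\kc_t$ is an elliptic curve over $k=\bar k$, so its torsion points are Zariski dense (and, when $k=\CC$, classically dense), and these points are covered by the $\widetilde C_n$ as $n$ varies. As the smooth locus of $\kc\to T$ is itself dense in $\kc$, this gives the required density in $\kc$, and hence in $X$ after applying the dominant morphism $p$.

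The only substantive point, and the one I would check carefully, is that $p$ really propagates both Zariski and classical density of the torsion multi-sections, and that it preserves the constant-cycle property in positive characteristic where the reduction to a lift is used to produce the family $\kc\to T$. Once this is granted, the conclusion is immediate in both topologies; for $k=\CC$ the density in the classical topology reduces to the classical density of torsion points in each complex elliptic fibre combined with the openness of $p$ on a dense locus.
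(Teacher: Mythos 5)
Your proposal is correct and follows essentially the same route as the paper: the corollary there carries its \qed precisely because the preceding paragraphs construct the dominating family $p:\kc\to X$ with a section over an ample rational curve and show, via the argument of Lemma \ref{lem:ccctorsion}, that the torsion multi-sections $C_n$ are constant cycle curves whose union is dense (classically so for $k=\CC$). Your only deviation is to pass through the pointwise notion, Proposition \ref{prop:Voisin} over a universal domain, and descent via Lemma \ref{lem:ccc}, i), where the paper instead invokes the direct relative-diagonal computation of Lemma \ref{lem:ccctorsion} valid over any algebraically closed field; both are sanctioned by the paper and yield the same conclusion.
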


\subsection{}\label{sec:infinell}
In Section \ref{sec:BVexas} we explained how to produce constant cycle curves as multi-sections of elliptic fibrations.
But, as was pointed out by Claire Voisin, also smooth (and hence non-rational)
fibres  can be constant cycle curves.

Consider the Kummer surface $X$ obtained as the minimal resolution of the standard involution on the
product $E_1\times E_2$ of two elliptic curves. Let $$X\to {(E_1\times E_2)/_\pm}\to \PP^1\cong
E_1/_\pm$$ be the elliptic fibration induced by the first 
projection. There are only four singular fibres all of type $I_0^*$ over the two-torsion points. Being
rational, they are constant cycle curves of order one.

Now consider a torsion point $t\in E_1$ of order $n\ne2$ and the fibre $C_t\subset X$ over $\bar t\in\PP^1$. 

\begin{lem}\label{lem:inffibre}
For a torsion point $t\in E_1$ of order $n\ne2$ the associated fibre $C_t\subset X$ is a smooth
constant cycle curve of order $d|n$.
\end{lem}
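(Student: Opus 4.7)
The plan is to pass through the Kummer double cover $\tilde\pi:\tilde A\to X$, where $\tilde A$ is $A=E_1\times E_2$ blown up at the sixteen fixed points of the sign involution $\sigma$, and to push the $n$-torsion relation $n([\{t\}]-[\{0\}])=0$ in $\Pic(E_1)$ down to $n\cdot\kappa_{C_t}=0$ in $\Ch^2(X_{k(\eta)})$, where $\eta$ denotes the generic point of $E_2$. Since $t$ has order $n\ne2$ it is not $2$-torsion, so $\bar t\in\PP^1$ is not one of the four singular-fibre images, the fibre $C_t$ is a smooth elliptic curve isomorphic to $E_2$, and $C_t$ lies entirely in the complement of the branch locus of $\tilde\pi$. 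Fix a point $x_0$ on a rational component of the $I_0^*$ fibre $C_0$, so $[x_0]=c_X$.

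The first step is to identify $\kappa_{C_t}$ via the cover. As closed points of $A_{k(\eta)}$ with residue field $k(\eta)$, both $(t,\eta)$ and $(0,\eta)$ are distinct from the base changes of the sixteen $\sigma$-fixed points of $A$---for $(t,\eta)$ because $t\notin E_1[2]$, for $(0,\eta)$ because the second coordinate is not $k$-rational---and so they lift uniquely to $\tilde A_{k(\eta)}$. Under $\tilde\pi$, the first maps to $\eta_{C_t}$, while the second maps to some $k(\eta)$-point $p_0$ of the base change of $C_0$; since any two $k(\eta)$-points of a connected tree of $\PP^1_{k(\eta)}$'s are rationally equivalent, $[p_0]=(c_X)_{k(\eta)}$, and hence $\kappa_{C_t}=\tilde\pi_*\bigl([(t,\eta)]-[(0,\eta)]\bigr)$.

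The next step is to produce the $n$-torsion rational equivalence upstairs. Choose $f\in k(E_1)^\times$ with $\mathrm{div}(f)=n([\{t\}]-[\{0\}])$; its pullback $F=f\circ\mathrm{pr}_1$ on $A$ has divisor $n([\{t\}\times E_2]-[\{0\}\times E_2])$, and restricting $F$ to the horizontal curve $E_1\times\{\bar\eta\}\subset A_{k(\eta)}$ gives $n[(t,\eta)]-n[(0,\eta)]\sim 0$ in $\Ch^2(A_{k(\eta)})$. The delicate point, and the reason the hypothesis $n\ne 2$ is essential, is ensuring that the strict transform of $E_1\times\{\bar\eta\}$ in $\tilde A_{k(\eta)}$ is disjoint from every exceptional divisor so that this equivalence transfers cleanly to $\tilde A_{k(\eta)}$; this disjointness holds because the $E_2$-coordinate $\bar\eta$ has residue field $k(\eta)\supsetneq k$ whereas $E_2[2]$ is $k$-rational, so $E_1\times\{\bar\eta\}$ avoids all sixteen fixed points and no exceptional correction terms appear. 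Pushing the resulting equivalence forward by $\tilde\pi_*$ and combining with the first step yields $n\cdot\kappa_{C_t}=0$, so by Lemma~\ref{lem:BVclass} $C_t$ is a constant cycle curve of order $d\,|\,n$.
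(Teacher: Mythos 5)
Your argument is correct, and at its core it is the same as the paper's: the paper simply observes that $C_t$, viewed with respect to the second elliptic fibration $X\to\PP^1\cong E_2/_\pm$, meets the generic fibre in an $n$-torsion point and then cites Lemma~\ref{lem:ccctorsion}; your horizontal curve $E_1\times\{\bar\eta\}$ is exactly the preimage in $E_1\times E_2$ of that generic fibre, and your rational function $f$ with $\mathrm{div}(f)=n([\{t\}]-[\{0\}])$ is precisely the witness to the $n$-torsion relation that Lemma~\ref{lem:ccctorsion} exploits. What you do differently is to unwind that lemma by hand through the Kummer double cover $\tilde A\to X$, which makes the proof self-contained at the cost of checking the (genuine, but easy) points that the two relevant closed points avoid the sixteen exceptional curves and that the image of $(0,\bar\eta)$ has class $(c_X)_{k(\eta)}$ because it lies on the rational tree $C_0$; both checks are done correctly. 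One small slip in the exposition: the disjointness of $E_1\times\{\bar\eta\}$ from the exceptional divisors follows solely from $\bar\eta\notin E_2[2]$ and has nothing to do with $n\ne2$; the hypothesis $n\ne2$ is really used where you first invoke it, namely to ensure that $C_t$ is a smooth fibre, that $\{t\}\times E_2\to C_t$ is an isomorphism (so $\tilde\pi_*[(t,\bar\eta)]=[\eta_{C_t}]$ with multiplicity one), and that $(t,\bar\eta)$ avoids the fixed locus.
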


\begin{proof} Indeed, if $C_t\subset X$ is viewed with respect to the other elliptic fibration $X\to \PP^1\cong E_2/_\pm$ then
it intersects the (smooth) fibres in $n$-torsion points. In particular, it intersects the generic fibre in an $n$-torsion
point $x_n\in E_1\times K(E_2/_\pm)$. Thus, $C_t$ is one of the curves $C_{x_n}$ considered in Lemma
\ref{lem:ccctorsion} and hence a constant cycle curve of order $d|n$.
(It seems likely that the order equals $n$, but for proving this one would need to control
the kernel of $\Ch^1(X_\mu)\to\Ch^2(X\times_k k(\mu))$, see Section \ref{sec:BVexas}.)
\end{proof}

In particular, this construction yields an elliptic K3 surface $X\to\PP^1$ with  infinitely
many fibres that are constant cycle curves (of growing order).
It is not clear to me whether this holds for arbitrary elliptic K3 surfaces.
Also note that a smooth fibre of an elliptic K3 surface $X\to \PP^1$
can even  be a constant cycle curve of order one, see Example \ref{exa:Dillies}
and the discussion in Section \ref{sec:counting}. The above construction dispels hope
expressed in \cite[Sec.\ 17]{Kerr} that the Abel--Jacobi class of a smooth fibre should
in particular always be non-torsion.

\section{More examples: fixed curves}\label{sec:auto}
We start this section with the branching curve of a double plane which turns out to be
a constant cycle curve of order at most two. This then generalizes to fixed point curves
of arbitrary non-symplectic automorphisms. For simplicity we work in characteristic zero.

\subsection{} Consider a generic double plane, i.e.\
K3 surface $X$  given as a $2:1$ cover $$X\to\PP^2, ~~i:X\congpf X$$ ramified
over a smooth curve $C\subset\PP^2$ of degree six
with $i$ the covering involution. Using the eigenspace decomposition one finds that
$i^*=-{\rm id}$  on $\Ch^2(X)_0$, for $\Ch^2(\PP^2)_0=0$.

Now consider $C$ as a curve in $X$
and write the class $[x]$ of a point $x\in C$ as $[x]=c_X+\alpha_x$ with $\alpha_x\in\Ch^2(X)_0$.
On the one hand, $i^*[x]=[i(x)]=[x]$ and, on the other, $i^*[x]=i^*(c_X+\alpha_x)=c_X-\alpha_x$.
Hence, for $x\in C$, one has $2\cdot\alpha_x=0$ and, since $\Ch^2(X)$ is torsion free, also $\alpha_x=0$,
i.e.\ $C$ is a (pointwise) constant cycle curve. This provides an explicit example of 
 a constant cycle curve which is smooth  and of  genus ten (and so in particular not rational). As we shall see in broader generality,
$C$ is a constant cycle curve of order one or two. This shows already that in general the genus of a constant cycle curve
is not determined by its order. Finding an example of a constant cycle curve of order one that is non-rational
is harder, see Corollary \ref{cor:exacccone}.

\subsection{}
This naive example is now generalized as follows. Suppose $f:X\congpf X$ is an automorphism
of finite order $n$. Assume that  the quotient $$\pi:X\to \bar X:=X/\langle f\rangle,$$ which is possibly singular,
satisfies $\Ch^2(\bar X)_0=0$. Note that due to Bloch's conjecture (cf.\ Section \ref{sec:BlochConj}),
which is known for surfaces of Kodaira dimension $<2$, the latter condition is equivalent to $f^*\ne{\rm id}$ on $H^{2,0}(X)$. Suppose a curve
$C\subset X$ is contained in the fixed point locus ${\rm Fix}(f)$. Then a similar trick as above shows
that $C$ is a (pointwise)
constant cycle curve. Indeed, write $[x]=c_X+\alpha_x$ for $x\in C$. Then $n\cdot [x]=n\cdot c_X+n\cdot\alpha_x$, but on the other hand
$n\cdot[x]$ is the pull-back of $[\pi(x)]\in\Ch^2(\bar X)$. Hence, $n\cdot[x]=n\cdot c_X$, which yields $n\cdot\alpha_x=0$ and, therefore, $\alpha_x=0$.
The calculation in this case suggests that any curve in the fixed point locus of a non-symplectic automorphism of finite order
$n$ is a constant cycle curve of order $d|n$. This can be shown rigorously as follows.

\begin{prop}\label{prop:fixedcurve}
Let $f:X\congpf X$ be an automorphism of finite order $n$  of a K3 surface
$X$ over an algebraically closed field $k$ of ${\rm char}(k)=0$ such that $f^*\ne{\rm id}$ on $H^{0}(X,\Omega_X^2)$. Then any curve $C\subset X$ contained in ${\rm Fix}(f)$ is a constant cycle curve of order $d|n$.
\end{prop}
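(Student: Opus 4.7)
Set $L := k(\eta_C)$ and let $\pi : X \to \bar X := X/\langle f\rangle$ be the quotient, a finite morphism of degree $n$. By Lemma~\ref{lem:BVclass}(ii), it suffices to prove $n \cdot [\eta_C] = n \cdot (c_X)_L$ in $\Ch^2(X_L)$, i.e.\ $n \cdot \kappa_C = 0$. The plan is to realize $n \cdot \kappa_C$ as $\pi_L^{\ast}(\pi_{L\ast}\kappa_C)$ and then to show that $\pi_{L\ast}\kappa_C$ vanishes because $\bar X$ satisfies Bloch's conjecture universally.

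The first step is purely formal. Since $C \subset {\rm Fix}(f)$, the generic point $\eta_C$ is fixed by $f_L$, so $f_L^{i\ast}[\eta_C] = [\eta_C]$ for every $i$. The Beauville--Voisin class is preserved by any automorphism of $X$ (an automorphism sends ample rational curves to ample rational curves), so $f^{\ast}c_X = c_X$ and hence $f_L^{\ast}(c_X)_L = (c_X)_L$. Together this gives $f_L^{\ast}\kappa_C = \kappa_C$. In characteristic zero, where $n = |\langle f\rangle|$ is invertible, the standard projection-formula identity $\pi^{\ast}\pi_{\ast} = \sum_{i=0}^{n-1} f^{i\ast}$ on Chow groups then yields
\[
\pi_L^{\ast}(\pi_{L\ast}\kappa_C) \;=\; n \cdot \kappa_C \qquad \text{in } \Ch^2(X_L),
\]
so the problem reduces to showing $\pi_{L\ast}\kappa_C = 0$.

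The class $\pi_{L\ast}\kappa_C$ is a degree-zero zero-cycle on the surface $\bar X_L$. The assumption $f^{\ast}\ne {\rm id}$ on $H^0(X,\Omega_X^2)$, together with $|f|<\infty$, forces $H^0(X,\Omega_X^2)^f = 0$, and hence $p_g(\bar X) = 0$. The existence of the fixed curve $C$ excludes the Enriques case—a non-symplectic involution whose quotient is Enriques must act freely—so a minimal resolution $\widetilde{\bar X} \to \bar X$ is a smooth rational surface. For such a surface one has $\Ch_0(\widetilde{\bar X}_L) \simeq \ZZ$ for every field extension $L/k$, since $\Ch_0(\PP^2_L) = \ZZ$ and $\Ch_0$ of a smooth projective surface is a birational invariant. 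The exceptional locus of the resolution consists of rational curves mapping to points of $\bar X$, so the pushforward $\Ch_0(\widetilde{\bar X}_L)_0 \twoheadrightarrow \Ch_0(\bar X_L)_0$ is surjective, giving $\Ch_0(\bar X_L)_0 = 0$. Therefore $\pi_{L\ast}\kappa_C = 0$, and combining with the first step yields $n \cdot \kappa_C = 0$.

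I expect the principal technical obstacle to be the projection-formula identity $\pi_L^{\ast}\pi_{L\ast} = \sum f_L^{i\ast}$ in the presence of the quotient singularities of $\bar X$, which has to be formulated carefully on Chow groups of a singular scheme. If this becomes cumbersome, one may work entirely upstairs on $X_L$ with the correspondence $\Gamma := \sum_{i=0}^{n-1}[\Gamma_{f^i}] \in \Ch^2(X \times X)$, whose action on $\Ch^2(X_L)$ is $\sum f_L^{i\ast}$; the vanishing then has to be reformulated motivically as the fact that the idempotent $\tfrac{1}{n}\Gamma$ kills $\kappa_C$, which is again Bloch's conjecture for $\bar X$. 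The universal triviality of $\Ch_0$ for the smooth rational surface $\widetilde{\bar X}$ is precisely what makes the bound $d\mid n$ hold exactly, rather than only up to a further uncontrolled factor.
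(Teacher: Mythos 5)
Your proposal is correct and follows essentially the same route as the paper: both arguments transfer the problem to the quotient $\bar X=X/\langle f\rangle$, which is rational because $f$ is non-symplectic and has a fixed curve, observe that $\pi^*$ multiplies the class of the generically fixed point $\eta_C$ by $n$, and conclude from the vanishing of degree-zero zero-cycles on (the base change of) a rational surface. The only cosmetic differences are that you phrase the transfer as $\pi_L^*\pi_{L*}$ instead of directly identifying $n\cdot\kappa_C$ as $\pi_L^*\bigl([\eta_{\bar C}]-(c_{\bar X})_L\bigr)$, and that you establish $\Ch^2(\bar X_L)_0=0$ via birational invariance of $\Ch_0$ rather than by citing Proposition \ref{prop:ccconrat}, ii) (the diagonal decomposition of $\PP^2$); the technical point you flag about $\pi^*$ on the singular quotient is present, and left implicit, in the paper's proof as well.
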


\begin{proof} Consider the pull-back $\pi^*:\Ch^2(\bar X)\to\Ch^2(X)$
induced by the projection $\pi:X\to \bar X$. Let $y:=\pi(x)$ for a point $x\in X$.
Then $\pi^*[y]=n\cdot c_X$ if $[x]=c_X$ and
$\pi^*[y]=n\cdot[x]$ for any fixed point $x\in X$. The same holds after base change to
any field extension $K/k$. Apply this
to $K=k(\eta_C)$ for a curve $C\subset X$ in the fixed point locus of $f$. Then the generic point $\eta_C\in C$, viewed
as a closed point $\eta_C\in X\times k(\eta_C)$, is fixed under $f_{k(\eta_C)}$. Similarly, the generic point
$\eta_{\bar C}\in\bar C:=\pi(C)\subset \bar X$ can be viewed as a closed point in $\bar X\times k(\eta_{\bar C})$
and, moreover, $k(\eta_{\bar C})= k(\eta_C)$. Then  $[\eta_{\bar C}]\mapsto n\cdot[\eta_C]$
under \begin{equation}\label{eqn:pullquot}
\pi_{k(\eta_C)}^*:\Ch^2(\bar X\times k(\eta_{\bar C}))\to \Ch^2(X\times k(\eta_C)).
\end{equation}

Since $f^*\ne{\rm id}$ on $H^{0}(X,\Omega_X^2)$ and ${\rm kod}(\bar X)<2$, Bloch's conjecture holds true for $\bar X$, i.e.\ $\Ch^2(\bar X)\cong\ZZ$.
In particular, there is a distinguished generator $c_{\bar X}\in \Ch^2(\bar X)$ with $\pi^*(c_{\bar X})=n\cdot c_X$.
In order to conclude, it is therefore enough to prove that $[\eta_{\bar C}]\in \Ch^2(\bar X\times k(\eta_{\bar C}))$
is in the image of the base change map $\Ch^2(\bar X)\to \Ch^2(\bar X\times k(\eta_{\bar C}))$.
Indeed, then the image $n\cdot \kappa_C$ of $[\eta_{\bar C}]-c_{\bar X}\times k(\eta_{\bar C})=0$ 
under (\ref{eqn:pullquot}) would  also be zero.

In other words, it is enough to prove that any curve in $\bar X$ (and so in particular $\bar C$) is a constant cycle curve of order one.
As by assumption $f$ has a fixed curve and hence $\bar X$ is rational, this follows from Proposition \ref{prop:ccconrat}, ii).
\end{proof}

\subsection{}
Non-symplectic automorphisms have been studied intensively in the literature. If the order is prime,
only $p=2,3,5,7,11,13,17$, and $19$ can occur. Their fixed point loci can be described, which often contains apart from isolated fixed points and rational curves also smooth elliptic curves and even smooth curves of higher genus, see e.g.\ \cite{AST}. However, the genus of constant cycle curves obtained in this way
is rather small, e.g.\ for $p=7,11$ at most elliptic curves can occur and for $p=13,17,19$ all curves
in ${\rm Fix}(f)$ are in fact rational. The maximal genus $g=11$ can be achieved for $p=2$.

\begin{cor}\label{cor:exacccone}
Consider an automorphism $f\in{\rm Aut}(X)$ of order $p\cdot q$ for two primes $p\ne q$. Assume
that $f^p$ and $f^q$ are both non-symplectic, i.e.\ $f^*$ acts by a primitive $(p\cdot q)$-th root of unity on $H^{0}(X,\Omega_X^2)$. Then any
curve $C\subset X$ in  ${\rm Fix}(f)$ is a constant cycle curve of order one.
\end{cor}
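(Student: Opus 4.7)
The plan is to reduce to Proposition \ref{prop:fixedcurve} by exploiting the two different iterates of $f$ whose fixed loci still contain $C$.

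First, I would note the elementary group-theoretic fact: if $f$ has order $pq$ with $p,q$ distinct primes, then $f^p$ has order $q$ and $f^q$ has order $p$. Moreover, if $f^*$ acts on $H^0(X,\Omega_X^2)$ by a primitive $(pq)$-th root of unity $\zeta$, then $(f^p)^*$ acts by $\zeta^p$, which is a primitive $q$-th root of unity, and $(f^q)^*$ acts by $\zeta^q$, a primitive $p$-th root of unity. In particular both $f^p$ and $f^q$ are non-symplectic automorphisms in the sense required by Proposition \ref{prop:fixedcurve}. (This is precisely the content of the hypothesis that $f^p$ and $f^q$ are both non-symplectic.)

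Next, observe that $C \subset \mathrm{Fix}(f)$ tautologically implies $C \subset \mathrm{Fix}(f^p)$ and $C \subset \mathrm{Fix}(f^q)$, since any point fixed by $f$ is fixed by all its powers. Applying Proposition \ref{prop:fixedcurve} to the automorphism $f^p$ (of order $q$) shows that $C$ is a constant cycle curve of some order $d_1 \mid q$. Applying the same proposition to $f^q$ (of order $p$) shows that $C$ is a constant cycle curve of order $d_2 \mid p$. But the order of $C$ as a constant cycle curve is a well-defined invariant of the curve (it is the order of the torsion class $\kappa_C$), so $d_1 = d_2$ divides $\gcd(p,q) = 1$. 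Hence the order equals one, which is the claim.

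There is no real obstacle here: the argument is a clean corollary, the only point worth double-checking is that Proposition \ref{prop:fixedcurve} indeed applies to each of $f^p$ and $f^q$ individually, which hinges only on the fact that a primitive $(pq)$-th root of unity raised to the $p$-th or $q$-th power is still a non-trivial root of unity. Everything else is automatic from the uniqueness of the order of $\kappa_C \in \Ch^2(X \times k(\eta_C))$.
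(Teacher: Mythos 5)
Your proof is correct and follows exactly the paper's own argument: since $C\subset{\rm Fix}(f)$ lies in the fixed loci of both $f^p$ and $f^q$, Proposition \ref{prop:fixedcurve} gives that the order of $\kappa_C$ divides both $q$ and $p$, hence equals one. The only difference is that you spell out the (correct) verification that $f^p$ and $f^q$ are non-symplectic of orders $q$ and $p$ respectively, which the paper leaves implicit.
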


\begin{proof} Under the assumptions, the curve $C$ would be in the fixed point locus of both, $f^p$ and $f^q$. Hence, 
$C$ is a constant cycle curve 
of order dividing $q$ and $p$ and, therefore, of order one.
\end{proof}

\begin{exa}\label{exa:Dillies}
As observed by Dillies in \cite{Dillies} and extended by Garbagnati and Sarti
in \cite{GS}, non-symplectic automorphisms of prime order $p$ often occur
as the square $f^2$ of an automorphism $f$ with $f^p$ a non-symplectic involution. The corollary applies
in this situation to curves in ${\rm Fix}(f)$. Cases where a non-rational curve is contained in ${\rm Fix}(f)$ can be found in \cite{Dillies,GS}.

It is worth pointing out that in the explicit example described in  \cite[Sect.\ 7]{Dillies} 
the K3 surface $X$ comes with an elliptic fibration
$\pi:X\to\PP^1$ which is preserved by $f$ and such that one of the smooth(!) fibres is contained
in ${\rm Fix}(f)$.  Concretely, $X$ is the elliptic surface given by the Weierstrass
equation $y^2=x^3+(t^6-1)^2$ and $f(x,y,t)=(x,y,\xi\cdot t)$ with $\xi$ a  primitive sixth root of unity.

As Alessandra Sarti informs me, this example can be generalized to yield a whole
family of elliptic K3 surfaces with a purely non-symplectic automorphism of order six with a smooth elliptic
curve in the fixed point locus and thus being a constant cycle curve of order one.

 For another family of examples see $X_{3,1}$ described in \cite[Prop.\ 4.7]{AS}. It is not difficult to write
down a square root of the order three automorphism given there, but the elliptic structure in the example is not obvious.
\end{exa}

 This then eventually yields an example of a constant cycle curve of order one which is not rational (but smooth elliptic).

\begin{cor}\label{cor:cccorderone}
There exist non-rational constant cycle curves of order one.\qed
\end{cor}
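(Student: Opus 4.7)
The plan is to combine Corollary \ref{cor:exacccone} with the explicit construction recalled in Example \ref{exa:Dillies}. Corollary \ref{cor:exacccone} reduces the problem to exhibiting a single K3 surface $X$ together with an automorphism $f\in{\rm Aut}(X)$ whose order is a product $p\cdot q$ of two distinct primes, with both $f^p$ and $f^q$ non-symplectic, and such that ${\rm Fix}(f)$ contains at least one non-rational component.

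The simplest choice is $p=2$, $q=3$, i.e.\ an order-six automorphism acting purely non-symplectically. I would take the elliptic K3 surface from \cite{Dillies} given by the Weierstrass equation $y^{2}=x^{3}+(t^{6}-1)^{2}$, together with the automorphism $f(x,y,t)=(x,y,\xi\cdot t)$ for $\xi$ a primitive sixth root of unity. The first step is to check that $f^{*}$ acts on $H^{0}(X,\Omega_{X}^{2})$ by a primitive sixth root of unity, so that both $f^{2}$ (of order $3$) and $f^{3}$ (of order $2$) act non-trivially on the holomorphic two-form; this is the hypothesis of Corollary \ref{cor:exacccone}. Next, I would verify, as done in \cite{Dillies}, that $f$ preserves the elliptic fibration $\pi:X\to\PP^{1}$ induced by the $t$-coordinate, and that the fibre over a suitable fixed point of the action on the base (e.g.\ a $t$-value at which the automorphism fixes the base pointwise after passing to the appropriate invariant locus) is a \emph{smooth} elliptic curve $E\subset X$ lying entirely in ${\rm Fix}(f)$.

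Granting these two verifications, Corollary \ref{cor:exacccone} applies directly to $E$: its order as a constant cycle curve must divide both $2$ and $3$ and hence equals $1$. Since $E$ is smooth of genus one, it is certainly not rational, so $E$ witnesses the existence of a non-rational constant cycle curve of order one, proving the corollary.

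The only nontrivial step is the second: checking that Dillies' explicit $f$ does fix a smooth fibre pointwise and acts as a primitive sixth root of unity on $H^{2,0}$. Both are direct computations in the Weierstrass model, and they are already carried out (or easily extracted from) \cite{Dillies} and \cite{GS}; alternatively, the family of examples mentioned by Sarti in Example \ref{exa:Dillies}, or the surface $X_{3,1}$ of \cite[Prop.\ 4.7]{AS} after extracting a square root of the order-three automorphism, provide equally valid inputs for the same argument.
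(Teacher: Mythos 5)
Your proposal is correct and follows exactly the route the paper takes: the corollary is stated with an immediate \qed precisely because it is the combination of Corollary \ref{cor:exacccone} (order divides both $2$ and $3$, hence equals one) with the explicit order-six purely non-symplectic automorphism of Dillies' elliptic K3 surface $y^2=x^3+(t^6-1)^2$ from Example \ref{exa:Dillies}, whose fixed locus contains the smooth fibre over $t=0$. The two verifications you flag (that $f^*$ acts by a primitive sixth root of unity on $H^{2,0}$ and that a smooth fibre is pointwise fixed) are exactly the content the paper delegates to \cite{Dillies}, so your write-up matches the intended argument.
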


Although it might be difficult to exhibit explicitly constant cycle curves of order one and arbitrary high genus,
there does not seem to be any reason why this should not be possible.

\section{Bitangent correspondence}\label{sec:bitangentcorr} Here, we exhibit a more involved
example, close in spirit to the ones described in Section \ref{sec:auto},
which leads to constant cycle curves of order at most four and geometric genus $201$.
These curves will be constructed as the fixed locus of the `bitangent correspondence' for a generic quartic K3 surface. The bitangent correspondence maps a generic point to the second
contact point of a bitangent at $x$. Since generically there are six bitangents at every point, this does not
define a map, but we will show that it is well defined on the Chow ring. Its fixed locus is the curve of contact points of hyperflexes.

\smallskip

Recall that for a quartic $X\subset \PP^3$ a line $\ell\subset\PP^3$ is called a \emph{bitangent} of $X$ if
at every $x\in X\cap \ell$ the intersection multiplicity is at least two. A bitangent $\ell$ is a \emph{hyperflex} if
there is a unique point of intersection (and, clearly, the intersection multiplicity is four then).

In this section we shall work over $\CC$.

\subsection{} Consider a smooth quartic $X\subset\PP^3$ not containing a line.  For generic $x\in X$ the 
curve $$C_x:=T_xX\cap X$$ has exactly one singularity, a node at $x$.
Let $\nu:\widetilde C_x\to C_x$ be its normalization and $f:\widetilde C_x\to X$ its composition with the inclusion
$C_x\subset X$. Then $\widetilde C_x$ is a smooth curve 
of genus two.

Choose a generic line $\PP^1\subset T_xX\cong\PP^2$ and consider the linear projection $\varphi:C_x\to\PP^1$  from the node $x\in C_x$.
As $\deg(C_x)=4$ and $x\in C_x$ is a node, $\varphi$ is of degree two and so is the composition
$\tilde\varphi:\widetilde C_x\to \PP^1$. By Hurwitz formula, the ramification divisor of $\tilde\varphi$ is of degree six. Thus, for generic
choices there are exactly six lines $$\ell_1,\ldots,\ell_6\subset T_xX$$ passing through $x$ and such that they
are bitangent to $C_x$ at some other point
$y_1,\ldots,y_6\in C_x$. (This is classical and well known. All it is saying is that there are exactly six bitangents through a generic $x\in X$.) 

The construction in particular shows that up to two-torsion the points $y_1,\ldots,y_6\in\widetilde C_x$
are linearly equivalent, as  $\tilde\varphi^*\ko(1)\cong\ko(2\cdot y_i)$ for all $i$.
Thus, for $f_*:{\rm Pic}(\widetilde C_x)\to {\rm CH}^2(X)$ one finds
$f_*\ko(1)=2\cdot[y_i]\in\Ch^2(X)$ and hence $2\cdot [y_1]=\ldots=2\cdot[y_6]$. Since $\Ch^2(X)$ is  torsion free,
in fact $[y_1]=\ldots=[y_6]\in\Ch^2(X)$.

So the $1:6$ correspondence $x\mapsto \{y_1,\ldots,y_6\}$ induces a well-defined
involution(!) $$\gamma:\Ch^2(X)\congpf\Ch^2(X),~~[x]\mapsto[y_1].$$
In fact, $6\cdot\gamma=[\Gamma_X]_*$, where  $\Gamma_X$ is the closure of
the locus $\{(x,y)~|~x\ne y,~\overline{x,y} \text{~bitangent}\}\subset X\times X$.

Note that for $(x,y)\in\Gamma_X$, i.e.\ generically $\overline{x,y}$ is a bitangent, the class
$\alpha:=[x]-[y]$ satisfies $\gamma(\alpha)=-\alpha$. This can be proved in general.

\begin{prop}\label{prop:bitminus}
The bitangent correspondence $\gamma$ acts by $-{\rm id}$ on $\Ch^2(X)_0$.
\end{prop}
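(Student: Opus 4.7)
My plan hinges on a single identity in $\Ch^2(X)$ attached to every bitangent. For any bitangent $\ell\subset\PP^3$ with contact points $x,y\in X$, the scheme-theoretic intersection satisfies $\ell\cap X=2x+2y$ as a zero-cycle. Its class in $\Ch^2(X)$ equals the class of any line section of $X$, which is $c_1(\ko_X(1))^2$, and by the Beauville--Voisin relation (\ref{eqn:BV}) this class equals $4\cdot c_X$. Hence $2[x]+2[y]=4\cdot c_X$ in $\Ch^2(X)$, and since $\Ch^2(X)$ is torsion free over $\CC$ one concludes the clean relation
$$[x]+[y]=2\cdot c_X.$$

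Applied to the bitangent correspondence, this gives $\gamma([x])=[y_i]=2\cdot c_X-[x]$ for every generic $x\in X$ and every one of the six contact points $y_i$. Evaluating on a point of a rational curve (where $[x]=c_X$) shows in particular $\gamma(c_X)=c_X$. Setting $\alpha:=[x]-c_X\in\Ch^2(X)_0$ with $x$ generic, one obtains
$$\gamma(\alpha)=\gamma([x])-c_X=(2\cdot c_X-[x])-c_X=-\alpha,$$
which is the desired formula on generic classes.

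To extend from generic $x$ to the whole of $\Ch^2(X)_0$, I would consider the correspondence
$$W:=\Gamma_X+6\cdot\Delta_X-12\cdot (X\times c_X)\in\Ch^2(X\times X),$$
whose action on $\Ch^2(X)$ has just been shown to vanish on $[x]$ for all $x$ in a dense open subset $U\subset X$. For any closed point $y\in X$, pick a smooth curve $C\subset X$ passing through $y$ and meeting $U$; then use Riemann--Roch on $C$ to rewrite $[y]\in\Pic(C)$ as a divisor supported away from $y$, i.e.\ as a $\ZZ$-linear combination of classes $[x_i]$ with $x_i\in U\cap C$. Pushing forward to $X$, every $[y]\in\Ch^2(X)$ lies in the subgroup generated by $\{[x]:x\in U\}$, so $W_*=0$ on all of $\Ch^2(X)$, which is equivalent to $\gamma(\alpha)=-\alpha$ for all $\alpha\in\Ch^2(X)_0$.

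The delicate step is precisely this extension. Whereas the generic identity is a quick consequence of Beauville--Voisin combined with torsion-freeness, two distinct closed points on a K3 surface are generally not rationally equivalent, so agreement of two homomorphisms of $\Ch^2(X)$ on a dense set of point-classes does not automatically propagate. The Riemann--Roch argument on an auxiliary curve (or, equivalently, a Bloch--Srinivas style spreading) is what bridges this gap; I expect all other parts of the proof to be essentially formal.
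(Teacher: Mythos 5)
Your argument is correct, and the way you obtain the key identity is genuinely different from (and simpler than) the paper's. The paper computes the whole cycle $\sum_{i=1}^{6}[y_i]=f_*\left(\sum y_i\right)$ at once: it identifies the line bundle $\ko\left(\sum y_i\right)$ on the normalization $\widetilde C_x$ of $C_x=T_xX\cap X$ by combining the Hurwitz formula for the degree-two projection $\tilde\varphi:\widetilde C_x\to\PP^1$ from the node with adjunction for $C_x\subset T_xX$, arriving at $[\Gamma_X]_*[x]=3\cdot(h.C_x)-6\cdot[x]=6\cdot c_X-6\cdot\alpha_x$. You instead note that each bitangent $\ell=\overline{x,y_i}$ is a complete intersection of two hyperplanes, so the length-four scheme $\ell\cap X=2x+2y_i$ represents $c_1(\ko_X(1))^2=4\cdot c_X$ by (\ref{eqn:BV}), whence $[x]+[y_i]=2\cdot c_X$ after dividing by two (torsion-freeness of $\Ch^2(X)$ by Roitman, which the paper also invokes). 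This bypasses the genus-two curve $\widetilde C_x$ entirely and, as a bonus, re-proves $[y_1]=\cdots=[y_6]$, i.e.\ the well-definedness of $\gamma$ on cycle classes, which the paper establishes separately beforehand. The step you flag as delicate --- passing from generic $x$ to all of $\Ch^2(X)_0$ --- is handled identically in both treatments: the paper compresses it into the single sentence that any point is rationally equivalent to a cycle supported on a fixed non-empty open subset, which is precisely your Riemann--Roch moving argument on an auxiliary curve; since the classes $[x]$ with $x\in U$ generate $\Ch^2(X)$, the homomorphism $W_*$ vanishes identically and dividing by six gives $\gamma=-{\rm id}$ on $\Ch^2(X)_0$. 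The only point worth making explicit is that when you evaluate at a point with $[x]=c_X$ to conclude $\gamma(c_X)=c_X$, you need such a point inside the good open set $U$; this is automatic since an ample rational curve must meet the dense open $U$ (or simply read $\gamma(c_X)=c_X$ off from $\gamma([x])=2\cdot c_X-[x]$ together with the degree computation).
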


\begin{proof} Write $[x]=c_X+\alpha_x$ for any $x\in X$. We have to show $\gamma(\alpha_x)=-\alpha_x$.
Since any point is rationally equivalent to a cycle contained in a fixed non-empty open subset, we can 
assume that $x$ is generic as above. For $\tilde\varphi:\widetilde C_x\to\PP^1$, Hurwitz formula yields
$\omega_{\widetilde C_x}\cong\tilde\varphi^*\ko_{\PP^1}(-2)\otimes \ko(\sum y_i)$. On the other hand, 
$\omega_{\widetilde C_x}\cong\nu^*\omega_{C_x}\otimes\ko(-x_1-x_2)$, where $x_1,x_2\in\widetilde C_x$ are the two points over the node $x\in C_x$. Since $\omega_{C_x}\cong\omega_{T_xX}\otimes\ko_{T_xX}(4)\cong\ko_{\PP^3}(1)|_{C_x}$, one obtains
$$\ko\left(\sum y_i\right)\cong\nu^*(\ko_{\PP^3}(1)|_{C_x})\otimes\tilde\varphi^*\ko_{\PP^1}(2)\otimes\ko(-x_1-x_2)\cong \nu^*(\ko_{\PP^3}(3)|_{C_x})\otimes\ko(-3\cdot x_1-3\cdot x_2),$$
where one uses $\tilde\varphi^*\ko_{\PP^1}(1)\cong\nu^*(\ko_{\PP^3}(1)|_{C_x})\otimes\ko(-x_1-x_2)$.

But then (using (\ref{eqn:BV}))
 $$[\Gamma_X]_*[x]=f_*\left(\sum y_i\right)=3\cdot(h.C_x)-6\cdot [x]=6\cdot c_X-6\cdot\alpha_x.$$ For a point $x$ with $\alpha_x=0$ this shows
$[\Gamma_X]_*c_X=6\cdot c_X$ and then for arbitrary $x\in X$ also $[\Gamma_X]_*\alpha_x=-6\cdot\alpha_x$.
\end{proof}

\begin{cor}\label{cor:bitH20}
The bitangent correspondence $\gamma$ acts by $-{\rm id}$ on $H^{2,0}(X)$.
\end{cor}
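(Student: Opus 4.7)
The plan is to lift the Chow-level vanishing in Proposition \ref{prop:bitminus} to a cohomological vanishing via a Mumford-type argument. Fixing a point $x_0\in X$ with $[x_0]=c_X$, I would consider the correspondence
\[
Z := \Gamma_X + 6\Delta_X - 12\bigl(X\times\{x_0\}\bigr) \in \Ch^2(X\times X).
\]
The computation in the proof of Proposition \ref{prop:bitminus} gives $[\Gamma_X]_*[x]=12\,c_X - 6[x]$ for every degree-one zero-cycle $[x]$, and since $(X\times\{x_0\})_*[x]=c_X$, a direct computation yields $Z_*[x]=0$. By linearity, $Z_*$ vanishes on all of $\Ch_0(X)=\Ch^2(X)$.

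Next, I would invoke Mumford's theorem in correspondence form (cf.~\cite[Ch.~22]{VoisinHodge}): for a smooth projective complex surface $X$, any $Z\in\Ch^2(X\times X)$ whose pushforward vanishes on $\Ch_0(X)$ also acts trivially on $H^{2,0}(X)=H^0(X,\Omega_X^2)$. The standard derivation uses the Bloch--Srinivas decomposition: there exist $N>0$ and divisors $D_1,D_2\subset X$ with $N\cdot Z = Z_1+Z_2$ in $\Ch^2(X\times X)$, where $Z_1$ is supported on $X\times D_1$ and $Z_2$ on $D_2\times X$; the action of each $Z_i$ on $H^{2,0}(X)$ factors through $H^{2,0}$ of the normalization of a curve and therefore vanishes.

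It then remains to identify the action of each summand of $Z$ on $H^{2,0}(X)$: the diagonal $\Delta_{X,*}$ is the identity, while $(X\times\{x_0\})_*\omega=\bigl(\int_X\omega\bigr)\cdot[x_0]=0$ for $\omega\in H^{2,0}(X)$, since $\omega$ is a form of degree two rather than four. The vanishing $Z_*=0$ on $H^{2,0}$ thus reduces to $[\Gamma_X]_*=-6\cdot\mathrm{id}$ on $H^{2,0}(X)$, and dividing by six gives $\gamma=-\mathrm{id}$ on $H^{2,0}(X)$, as claimed.

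The main obstacle is the clean application of the Bloch--Srinivas/Mumford step, and in particular the choice of the correction term $-12(X\times\{x_0\})$: it is precisely what promotes the Chow-level vanishing from $\Ch^2(X)_0$ (which Proposition \ref{prop:bitminus} provides directly) to all of $\Ch_0(X)$, while contributing nothing to the action on $H^{2,0}(X)$ so that the identity $[\Gamma_X]_*=-6\cdot\mathrm{id}$ can be read off unambiguously.
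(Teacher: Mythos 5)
Your argument is correct and is essentially the paper's proof: the paper disposes of this corollary by citing the ``easy direction'' of the Bloch--Beilinson formalism (\cite[Prop.\ 23.18]{VoisinHodge}), i.e.\ the generalized Mumford theorem proved via the Bloch--Srinivas decomposition, and your proposal simply spells out that citation by constructing the auxiliary correspondence $Z=\Gamma_X+6\Delta_X-12(X\times\{x_0\})$ killing $\Ch_0(X)$ and unwinding its action on $H^{2,0}(X)$. You do not take the alternative, more roundabout route via the Tikhomirov--Welters \'etale cover and the Kuga--Satake correspondence sketched in Remark \ref{rem:moreproof}, but that is only offered in the paper as a second proof.
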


\begin{proof} This follows from the `easy direction'
of the general conjectures on the Bloch--Beilinson filtration, see e.g.\ \cite[Prop.\ 23.18]{VoisinHodge}.
See also Remark \ref{rem:moreproof}.
\end{proof}

\subsection{} There is a more geometric way of defining this correspondence. Consider
the universal family of bitangents:
$$\xymatrix{B_X\ar[d]_q\ar[r]^p&X\\
F_X.&}$$
More explicitly, bitangents $\ell$ of $X$ correspond bijectively to points $[\ell]\in F_X$ and $B_X$ is the variety
of all $(\ell,x)$, with $\ell$  a bitangent of $X$ and $x$ is a point of contact, i.e.\ $x\in X\cap\ell$.
By $p$ and $q$ we denote the projections $(\ell,x)\mapsto x$ resp.\ $(\ell,x)\mapsto [\ell]$. In particular, $q$
is of degree two with a well studied ramification divisor $D_{\rm hf}\subset F_X$  (the curve of hyperflexes)
and $p$ is of degree six, as there are exactly six bitangents at a generic $x$.
As shown by Tikhomirov and Welters, $F_X$ and $B_X$ are smooth irreducible surfaces (of general type), see
\cite{Tikho, We}.

Now consider the covering involution $$i:B_X\to B_X$$ of
the double cover $q:B_X\to F_X$. So for generic bitangent $\ell$, the involution $i$ interchanges
$(\ell,x)$ and $(\ell,y)$, where $X\cap \ell=\{x,y\}$.

Then $\Gamma_X\subset X\times X$ is the image of the natural morphism
$$g:B_X\to X\times X,~(\ell,x)\mapsto (x=p(\ell,x),p(i(\ell,x))).$$
In other words, $g(\ell,x)=(x,y)$ for $\ell=\overline{x,y}$ with $x,y\in X$.

The morphism $g$ is generically injective, as the points of contact $x,y$ of a bitangent $\ell$ clearly
determine $\ell$ when $x\ne y$.  

\begin{lem}\label{lem:eqncorr}
The correspondence $[\Gamma_X]_*=6\cdot\gamma:{\rm CH}^2(X)\to {\rm CH}^2(X)$ coincides with
$$p_*\circ i^*\circ p^*:{\rm CH}^2(X)\to{\rm CH}^2(B_X)\to\Ch^2(B_X)\to{\rm CH}^2(X).$$ The same
assertion of course holds on the level of cohomology.\qed
\end{lem}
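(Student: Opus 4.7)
The plan is to reduce everything to the standard projection formula applied to the birational morphism $g\colon B_X\to \Gamma_X$. The factor $6$ is already built into the identification $[\Gamma_X]_*=6\cdot\gamma$ from the previous subsection, so the content of the lemma is just the equality $[\Gamma_X]_*=p_*\circ i^*\circ p^*$ of correspondences $\Ch^2(X)\to\Ch^2(X)$.

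First I would check that $g_*[B_X]=[\Gamma_X]$ in $\Ch^2(X\times X)$. By the theorem of Tikhomirov and Welters, $B_X$ is smooth and irreducible of dimension two, and $g$ is proper (being a morphism between projective varieties). The remark that $g$ is generically injective, because the two contact points of a bitangent determine the line when they are distinct, shows that $g$ is birational onto its image $\Gamma_X$. Hence $g_*[B_X]=[\Gamma_X]$ as cycle classes.

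Next, for $\alpha\in\Ch^2(X)$ I would unwind the definition of the action of a correspondence and use the projection formula:
\begin{align*}
[\Gamma_X]_*(\alpha) &= {\rm pr}_{2*}\bigl({\rm pr}_1^*\alpha\cdot[\Gamma_X]\bigr)
={\rm pr}_{2*}\bigl({\rm pr}_1^*\alpha\cdot g_*[B_X]\bigr)\\
&={\rm pr}_{2*}\,g_*\bigl(g^*{\rm pr}_1^*\alpha\bigr)
=({\rm pr}_2\circ g)_*\bigl(({\rm pr}_1\circ g)^*\alpha\bigr).
\end{align*}
From the very definition $g(\ell,x)=(x,p(i(\ell,x)))$ one reads off ${\rm pr}_1\circ g=p$ and ${\rm pr}_2\circ g=p\circ i$, so the right-hand side equals $(p\circ i)_*\circ p^*(\alpha)$. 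Since $i$ is an isomorphism (an involution), $i_*=(i^{-1})^*=i^*$, and this gives $p_*\circ i^*\circ p^*(\alpha)$, as claimed.

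The same computation works verbatim on cohomology, replacing the Chow-theoretic projection formula by its cohomological counterpart, which gives the final sentence of the lemma. The only point one must take care of is the birationality of $g\colon B_X\to\Gamma_X$, i.e.\ the equality $g_*[B_X]=[\Gamma_X]$; granting the smoothness and irreducibility of $B_X$ (already quoted from \cite{Tikho, We}) this is immediate, so I do not anticipate any serious obstacle.
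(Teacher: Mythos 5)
Your argument is correct: the identities $g_*[B_X]=[\Gamma_X]$ (from generic injectivity of $g$), the projection formula, and ${\rm pr}_1\circ g=p$, ${\rm pr}_2\circ g=p\circ i$ with $i^*=i_*$ give exactly the claimed equality. The paper states this lemma with no proof (the \qed is in the statement, the assertion being regarded as immediate from the definition of $g$), and your write-up is precisely that standard verification.
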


Mapping a bitangent $\ell\in F_X$ to its intersection $X\cap \ell$ defines a closed embedding
$$F_X\,\hookrightarrow {\rm Hilb}^2(X).$$
The image is the fixed point locus of the Beauville involution mapping $[Z]\in {\rm Hilb}^2(X)$ to the
residual intersection of the unique line through $Z$ with $X$.

\begin{cor} 
The surface $F_X\subset{\rm Hilb}^2(X)$ is a rigid Lagrangian surface of general type.
\end{cor}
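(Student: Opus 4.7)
My plan is to establish the three claims separately. Smoothness, irreducibility, and the general-type property of $F_X$ are already the theorem of Tikhomirov and Welters cited in the excerpt, so I will focus on the Lagrangian and the rigidity properties.

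To show that $F_X$ is Lagrangian, I plan to prove $\sigma|_{F_X}=0$ directly, where $\sigma$ is the symplectic form on ${\rm Hilb}^2(X)$. I will pull back along the double cover $q\colon B_X\to F_X$ and use the morphism $g=(p,p\circ i)\colon B_X\to X\times X$, which factors the embedding $F_X\hookrightarrow{\rm Hilb}^2(X)$ through the Hilbert--Chow contraction. Since the Hilbert--Chow pullback of $\sigma$ to $X\times X$ is the $S_2$-invariant two-form ${\rm pr}_1^*\omega+{\rm pr}_2^*\omega$ (for $\omega$ a generator of $H^{2,0}(X)$), one gets on $B_X$
\[
q^*(\sigma|_{F_X})=p^*\omega+i^*p^*\omega.
\]
Because $q$ is surjective, vanishing of this holomorphic two-form on $B_X$ implies $\sigma|_{F_X}=0$, and since $\dim F_X=\frac{1}{2}\dim{\rm Hilb}^2(X)$, isotropy already amounts to being Lagrangian.

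To prove the vanishing I plan to compute the $L^2$-norm. Expanding,
\[
\|p^*\omega+i^*p^*\omega\|^2=2\|p^*\omega\|^2+2\,{\rm Re}(p^*\omega,\,i^*p^*\omega).
\]
The self pairing equals $(\deg p)\,\|\omega\|^2=6\|\omega\|^2$ by the projection formula. For the cross term, the map $g$ is generically bijective onto $\Gamma_X\subset X\times X$ (both orderings of a bitangent's two contact points are already parameterized separately by $B_X$, as noted in the text just after the definition of $g$), so
\[
(p^*\omega,\,i^*p^*\omega)=\int_{\Gamma_X}{\rm pr}_1^*\omega\wedge{\rm pr}_2^*\bar\omega=\int_X[\Gamma_X]_*\omega\wedge\bar\omega,
\]
and by Lemma \ref{lem:eqncorr} combined with Corollary \ref{cor:bitH20} this is $\int_X(-6\omega)\wedge\bar\omega=-6\|\omega\|^2$. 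The two contributions cancel to give zero $L^2$-norm, forcing the holomorphic form to vanish. This is the genuinely new input, and it is precisely where the action of the bitangent correspondence on $H^{2,0}(X)$ established in Corollary \ref{cor:bitH20} enters, justifying the label ``Corollary''.

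For rigidity, once $F_X$ is Lagrangian, contraction with $\sigma$ provides a canonical isomorphism $N_{F_X/{\rm Hilb}^2(X)}\cong\Omega^1_{F_X}$. Infinitesimal deformations of $F_X$ inside ${\rm Hilb}^2(X)$ are therefore parameterized by $H^0(F_X,\Omega^1_{F_X})=H^{1,0}(F_X)$, and my plan is to invoke the Hodge-theoretic invariants of $F_X$ computed by Welters and Tikhomirov to conclude that the irregularity $q(F_X)$ vanishes, whence rigidity. The step I expect to require the most care is the verification $\deg g=1$ in the cross-term computation: were $g$ instead two-to-one, the cross term would double and the $L^2$-norm squared would come out negative, so the cancellation is delicate and depends on carefully keeping track of how the two contact points of a bitangent are indexed by $B_X$.
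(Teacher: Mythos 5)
Your proposal is correct and follows essentially the same route as the paper: the Lagrangian property is deduced from Corollary \ref{cor:bitH20} (your $L^2$-norm cancellation, using $\deg p=6$, the generic injectivity of $g$, and Lemma \ref{lem:eqncorr}, is just the expanded form of the paper's ``immediate consequence''), and rigidity comes from the symplectic identification $\mathcal{N}_{F_X/{\rm Hilb}^2(X)}\cong\Omega^1_{F_X}$ together with Welters' computation $H^0(F_X,\Omega^1_{F_X})=0$, with general type quoted from Tikhomirov--Welters. The one point you flag as delicate, $\deg g=1$, is exactly the generic injectivity of $g$ already recorded in the text, so your cross-term computation stands.
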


\begin{proof} That $F_X$ is Lagrangian is an immediate consequence of $\gamma$ acting as $-{\rm id}$ on $H^{2,0}(X)$.
It is rigid, because $H^0(F_X,{\mathcal N}_{F_X/{\rm Hilb}^2(X)})\cong H^0(F_X,\Omega_X)=0$ due to \cite[(3.43)]{We}.
Its canonical bundle $\omega_{F_X}$ is actually ample, as was shown in \cite{Tikho,We}.
\end{proof}

\begin{remark}\label{rem:moreproof}
Using the geometric description of the bitangent correspondence, one can give 
 another, more roundabout, proof of Corollary \ref{cor:bitH20}.
Suppose $\gamma^*\ne-{\rm id}$ on $H^{2,0}(X)$. Then $q_*\circ p^*: H^{2,0}(X)\to H^{2,0}(F_X)$ does not vanish.
By the work of Tikhomirov  \cite{Tikho} and Welters \cite{We}, there exists an \'etale cover $\pi:F\to F_X$ of degree two
with the property that $\bigwedge^2H^1(F,\QQ)\cong\pi^*H^2(F_X,\QQ)$. Here, $F$ is the Fano variety of lines
on the double quartic solid $Y\to\PP^3$ branched over $X\subset\PP^3$. This would establish a non-trivial algebraic(!)
correspondence between $H^2(X,\QQ)$ and $H^1(F,\QQ)\otimes H^1(F,\QQ)$, i.e.\ between the K3 surface $X$ 
and the square of
the abelian variety given by the weight-one Hodge structure of $F$. This is reminiscent of the Kuga--Satake corres\-pon\-dence
which is conjectured to be algebraic. However, as Fran\c{c}ois Charles explained to me, the ten-dimensional abelian variety determined by $H^1(F,\QQ)$ is too small to play a part in the Kuga--Satake correspondence for the generic
quartic (which would be of the form $A^{2^{10}}$ with $A$ a simple abelian variety of dimension $2^9$), so that the uniqueness of the Kuga--Satake correspondence eventually leads to a contradiction. Hence, $\gamma^*=-{\rm id}$
on $H^{2,0}(X)$.
\end{remark}

\begin{remark}
Once Corollary \ref{cor:bitH20} has been verified, it can in turn be used to prove Proposition \ref{prop:bitminus} by considering the family
of all quartics $\kx\to|\ko(4)|$. Either one uses the explicit description for the Chow ring of $\Ch^*(\kx)$ by viewing
$\kx$ as a projective bundle over $\PP^3$ or  a technique developed in \cite{VoisinCatanese} which only uses that $\kx$ is rationally
connected.
\end{remark}


\subsection{} Consider now the curve of hyperflexes $D_{\rm hf}\subset F_X$ and the image of the curve $q^{-1}(D_{\rm hf})\subset B_X$ under the projection $p:B_X\to X$.
This yields the curve $$C_{\rm hf}:=p(q^{-1}(D_{\rm hf}))\subset X$$ of all points $x\in X$ such that there exists a hyperflex at $x$. 

\begin{prop}\label{cor:cccbit}
For a quartic $X\subset\PP^3$ not containing a line, the curve $C_{\rm hf}\subset
X$ of contact points of hyperflexes is a constant cycle curve of order $n|4$.
\end{prop}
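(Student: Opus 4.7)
My plan is to split the argument into a pointwise step and a family step. The pointwise step is quick: for $x \in C_{\rm hf}$ with hyperflex $\ell$, one has $\ell \cdot X = 4[x]$ as a $0$-cycle on $\ell$ by the defining contact multiplicity; on the other hand, writing $\iota\colon X\hookrightarrow\PP^3$ for the inclusion and $h$ for the hyperplane class of $\PP^3$, one has $\iota^*[\ell]=\iota^*(h^2)=c_1(\ko_X(1))^2=4c_X$ by Beauville--Voisin. Since $\ell\cap X$ is a proper (zero-dimensional) intersection, $\iota^*[\ell]=[\ell\cap X]=4[x]$, so $4[x]=4c_X$ in $\Ch^2(X)$ and hence $[x]=c_X$ by torsion-freeness. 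Thus $C_{\rm hf}$ is already a pointwise constant cycle curve.

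To promote this to the order bound $n\mid 4$, I would run a relative version. After passing to the normalization $\widetilde C_{\rm hf}$, I choose a Zariski open $U\subset\widetilde C_{\rm hf}$ on which the hyperflex $\ell_u$ at the image point is uniquely determined, giving a morphism $U\to F_X$, $u\mapsto\ell_u$, and a resulting $\PP^1$-bundle $L_U\subset\PP^3\times U$. Because each $\ell_u$ has contact $4$ with $X$ at the image of $u$, the scheme-theoretic intersection satisfies $L_U\cap(X\times U)=4\Gamma_U$, where $\Gamma_U\subset X\times U$ is the graph of the composition $U\to\widetilde C_{\rm hf}\to X$.

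The Chow computation then runs as follows. By the projective bundle formula and $\dim U=1$ (so $\Ch^2(U)=0$), one can write $[L_U]=h^2+\alpha\cdot h$ in $\Ch^2(\PP^3\times U)$ with $\alpha\in\Ch^1(U)=\Pic(U)$; the coefficient of $h^2$ is forced to $1$ by restriction to any fibre $\PP^3\times\{u\}$. Representing $\alpha$ by a $0$-cycle on $U$ and shrinking $U$ to avoid its support yields $[L_U]=h^2$ on the smaller open. Since the intersection $L_U\cap(X\times U)$ has the expected dimension, the refined Gysin pullback gives $\iota^*[L_U]=4[\Gamma_U]$; on the other hand $\iota^*[L_U]=\iota^*(h^2)=\pi_X^*(c_1(\ko_X(1))^2)=4(\{x_0\}\times U)$ for any $x_0$ with $[x_0]=c_X$. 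Comparing and passing to the direct limit using $\Ch^2(X\times k(\eta_{C_{\rm hf}}))=\lim\limits_{\rightarrow}\Ch^2(X\times U)$ yields $4\kappa_{C_{\rm hf}}=0$.

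The main obstacle I see is the local uniqueness of the hyperflex: if the morphism $R:=q^{-1}(D_{\rm hf})_{\rm red}\to C_{\rm hf}$ has generic degree $d>1$ on some component, the corresponding pushforward argument only produces the weaker bound $n\mid 4d$. For $X$ smooth and not containing a line this does not occur, because inside $T_xX$ the hyperflex line is uniquely characterized as the line through $x$ tangent to both local branches of the node of $C_x=T_xX\cap X$; hence $R\to C_{\rm hf}$ is birational on each integral component and one may work directly on $U\subset\widetilde C_{\rm hf}$ as above.
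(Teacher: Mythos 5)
Your core identity is exactly the one the paper uses: the hyperflex $\ell$ at $x$ gives $4[x]=\ell\cdot X=\iota^*(h^2)=c_1(\ko_X(1))^2=4c_X$, which is the paper's second pointwise argument (phrased there as $4\cdot[x]=C_x.(H|_X)=4\cdot c_X$ with $\ell=T_xX\cap H$). Where you genuinely differ is in promoting this to the bound on the order of $\kappa_{C_{\rm hf}}$. The paper stays at the generic point $\eta$ of $C_{\rm hf}$: since the hyperflex at $\eta\in X_{k(\eta)}$ is unique, it is $\mathrm{Gal}(\overline{k(\eta)}/k(\eta))$-invariant, hence $\ell$ and the auxiliary hyperplane $H$ are defined over $k(\eta)$ and the pointwise computation runs verbatim in $\Ch^2(X\times k(\eta))$, giving $4\cdot[\eta]=4\cdot(c_X)_{k(\eta)}$. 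You instead spread the hyperflexes out over an open $U\subset\widetilde C_{\rm hf}$, compute $[L_U]=h^2+\alpha\cdot h$ by the projective bundle formula, kill $\alpha$ by shrinking $U$, and compare the two evaluations of $\iota^*[L_U]$ before restricting to the generic fibre via (\ref{eqn:dirlimi}). Both routes are correct and both hinge on the same nontrivial input, namely the generic uniqueness of the hyperflex, i.e.\ the generic injectivity of $q^{-1}(D_{\rm hf})\to C_{\rm hf}$; your version trades the Galois-descent step for projective-bundle and refined-Gysin bookkeeping, and is a fully adequate substitute.

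The one real flaw is your justification of that uniqueness. A line ``tangent to both local branches of the node of $C_x$'' does not exist: the two branches of a node have distinct tangent directions, so no single line is tangent to both. The correct local picture at a generic $x\in C_{\rm hf}$ (where $C_x$ still has an honest node at $x$, since $C_{\rm hf}\in|\ko_X(20)|$ is irreducible and not contained in the degree-$8$ parabolic curve) is that the hyperflex is the tangent line to the one branch that has a flex at $x$, meeting it with contact $3$ and the other branch transversally, totalling $4$; uniqueness then holds generically because both branches flexing simultaneously is a codimension-two condition on $X$. As stated, your characterization would prove too much (or nothing), and it does not by itself rule out two distinct hyperflexes at the generic point, which is exactly what controls whether you get $n\mid 4$ or only $n\mid 4d$. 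The paper sidesteps the local analysis entirely and quotes the dimension count of Welters showing $q^{-1}(D_{\rm hf})\to C_{\rm hf}$ is generically injective (see the proof of Proposition \ref{prop:CHF}). With the uniqueness justified either way, your argument closes.
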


\begin{proof} We shall first give two pointwise arguments showing that $C_{\rm hf}$ is a constant
cycle curve. The first one seems to (wrongly?) suggest that the order should be at most two, whereas
the second one can be turned into a rigorous argument proving the assertion.

-- Any hyperflex $\ell$  at a point  $x\in C_{\rm hf}$ is the limit of proper bitangents
$$\ell_t:=\overline{x_t,y_t}\to \ell$$ with   $x_t,y_t$ both
specializing to $x$. Since $\gamma([x_t])=[y_t]$,
 Proposition \ref{prop:bitminus} shows $[x_t]+[y_t]=2\cdot c_X$ which after specializing
 gives $2\cdot[x]=2\cdot c_X$. Therefore, $C_{\rm hf}$ is a pointwise constant cycle curve and hence, by Proposition \ref{prop:Voisin}, also a
constant cycle curve. Note that the argument cannot be used to actually prove that the order is $n|4$ (or even better $n|2$),
as only $6\cdot \gamma$ is  a priori defined by an integral cycle and running the argument again with $6\cdot\gamma$ 
only shows that the order divides $12$.

-- For $x\in C_{\rm hf}$ there exists a line $\ell\subset T_xX$ with $\ell$ and  $C_x$ only intersecting in $x$ (with multiplicity four).
But $\ell= T_xX\cap H$ for some hyperplane $H\subset \PP^3$. Hence, $4\cdot [x]=C_x.(H|_X)=4\cdot c_X$ by (\ref{eqn:BV}).

-- The last argument works for the base change $X_k$ to any field $k/\CC$ (not necessarily
algebraically closed), as long as the hyperflex $\ell$ is unique.
Indeed, $T_xX_k$ and $C_x\subset X_k$ are defined over $k$ and if $\ell\subset T_xX_{\bar k}$ is the unique hyperflex, it is left invariant under
$Gal(\bar k/k)$. Hence, $\ell$ is and $H$ can be defined over $k$. Thus, $4\cdot[x]=4\cdot c_{X_k}$ in $\Ch^2(X_k)$. This can be applied
to the generic point $\eta$ of $C_{\rm hf}\subset X$ viewed as a closed point $\eta\in X\times{k(\eta)}$.
Since $q^{-1}(D)\to C_{\rm hf}$ is generically injective (see the proof of Proposition \ref{prop:CHF}), the generic
point of $C_{\rm hf}$ admits a unique hyperflex.
This proves that $\kappa_{C_{\rm hf}}$ is of order $n|4$.
\end{proof}

Continuing the analogy between the covering involutions of $X\to\PP^2$ and $B_X\to F_X$ in the degree two resp.\
four case, the curve $C_{\rm hf}\subset X$ should be seen as the analogue of the degree six branching curve
$C\subset X\to\PP^2$. It is curious to note that
as in the degree two case, the generic K3 surface of degree four contains a distinguished curve. It would be interesting
to find distinguished curves in K3 surfaces of higher degree curve as well.\footnote{To venture a guess, any distinguished curve, i.e.\ a curve that is naturally defined in all generic K3 surfaces of fixed degree, should be a constant cycle curve. Compare this to
O'Grady's Franchetta conjecture in \cite{OG}.}

\subsection{}
A little more can be said about the curve $C_{\rm hf}\subset X$, which shall be recorded here.

\begin{prop}\label{prop:CHF}
For a quartic $X\subset \PP^3$ not containing a line the curve
$C_{\rm hf}\subset X$ is a singular irreducible curve in the linear system
$|\ko_X(20)|$ of geometric genus $201$.  
In particular, $C_{\rm hf}$ is not rational.
\end{prop}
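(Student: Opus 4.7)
\emph{Proof plan.} The plan is to realize $C_{\rm hf}$ as the birational image of the ramification curve of the double cover $q\colon B_X \to F_X$ and then extract its numerical invariants from the geometry of the Fano surface of bitangents, for which the detailed cohomological computations of Tikhomirov and Welters are available.

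The first step is to observe that since $q$ is a flat double cover with smooth total and base (by \cite{Tikho,We}) whose branch curve is precisely $D_{\rm hf}$, the reduced preimage $R := q^{-1}(D_{\rm hf})_{\rm red}$ maps isomorphically to $D_{\rm hf}$ under $q$; in particular $R$ is smooth and its components correspond bijectively to those of $D_{\rm hf}$. By definition $C_{\rm hf}=p(R)$. I would then verify that $p|_R \colon R \to C_{\rm hf}$ is birational. One direction is tautological: a point of $R$ is a pair $(\ell,x)$ with $\ell$ a hyperflex and $x$ its unique contact point with $X$. For the converse, through a generic $x \in C_{\rm hf}$ there is a unique hyperflex; two hyperflexes at one point would force the plane section $C_x = T_xX \cap X$ to carry two distinct tangent directions at its node along which the intersection with $C_x$ jumps from $3$ to $4$, which is a condition of codimension $\geq 2$ on $X$ and so cannot occur on a generic point of the codimension-one locus $C_{\rm hf}$. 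This establishes that $C_{\rm hf}$ is irreducible and that $p_g(C_{\rm hf}) = g(D_{\rm hf})$.

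Next I would compute $[C_{\rm hf}] \in \Pic(X) = \ZZ \cdot H$ (where generic quartic means $\rho(X)=1$). Using the projection formula one has $[R] = \tfrac{1}{2}q^*[D_{\rm hf}]$ in $\Ch^1(B_X)_\QQ$ and $p_*[R] = [C_{\rm hf}]$, whence $2[C_{\rm hf}] = p_*q^*[D_{\rm hf}]$ in $\Ch^1(X)_\QQ$. The class of $D_{\rm hf}$ in $\Pic(F_X)$ and the incidence correspondence $p \circ q^{-1}$ are fully analyzed in \cite{Tikho,We}; substituting these formulas yields $[C_{\rm hf}] = 20\,H$, equivalently the degree of $C_{\rm hf}$ in $\PP^3$ is $80$. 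For the genus, I would apply adjunction on the smooth surface $F_X$:
\[
2g(D_{\rm hf})-2 \;=\; D_{\rm hf}\cdot(D_{\rm hf}+K_{F_X}),
\]
plugging in the Welters--Tikhomirov expressions for $K_{F_X}$, $D_{\rm hf}^2$, and $D_{\rm hf}\cdot K_{F_X}$; the target value is $g(D_{\rm hf})=201$. Finally, $p_g(C_{\rm hf}) = 201 > 0$ immediately excludes rationality.

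The main obstacle is not conceptual but bookkeeping: one must transport the precise divisor classes of $K_{F_X}$ and $D_{\rm hf}$ on the Fano surface, and the class of the universal incidence $B_X \subset F_X\times X$, through the two projections to arrive at the two integers $20$ and $201$. As an internal consistency check, the adjunction formula on $X$ predicts arithmetic genus $\tfrac{(20H)^2}{2}+1 = 801$, so the computation must simultaneously produce $801-201=600$ units worth of $\delta$-invariant concentrated at the singularities of $C_{\rm hf}$; the existence of such singularities is already clear because the map $p|_R$ fails to be an embedding wherever two distinct hyperflexes share a contact point, a locus whose contribution can in principle be enumerated again from the Welters--Tikhomirov formulas.
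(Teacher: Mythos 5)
Your proposal is correct and follows essentially the same route as the paper: identify $C_{\rm hf}$ as the birational image of the ramification curve $q^{-1}(D_{\rm hf})\cong D_{\rm hf}$, establish generic injectivity of $p$ there by a dimension count, take the genus $201$ from the Tikhomirov--Welters analysis of $D_{\rm hf}$ (which the paper cites directly rather than re-deriving by adjunction), and obtain $[C_{\rm hf}]=20h$ by pushing the known divisor classes through the incidence correspondence --- the paper packages this last step as the intersection number $p^*h\cdot(\sigma+p^*h)=80$ on $B_X$, but the content is the same.
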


\begin{proof} By construction, $C_{\rm hf}$ is the image of $q^{-1}(D_{\rm hf})\cong D_{\rm hf}\subset F_X$, 
where $q:B_X\to F_X$ is a $2:1$ cover with ramification curve $D_{\rm hf}\subset F_X$.
By \cite[(1.6)\&\! p.\!\! 40]{We} the curve of hyperflexes $D_{\rm hf}\subset F_X$ is a smooth curve of genus $201$.

A dimension count similar to the one in \cite[p.\ 17-18]{We} shows that $q^{-1}(D_{\rm hf})\to C_{\rm hf}$ is ge\-neri\-cally injective.
Also, using the notation of \cite[Sec.\ 3]{We}, one computes the intersection number
$p^*h.[q^{-1}(D_{\rm hf})]=p^*h.(\sigma+p^*h)=80$, where $h$ is the hyperplane section of $X$ and $\sigma$ the hyperplane
section on the space of lines. Since $h.h=4$, this shows
$[C_{\rm hf}]=20\cdot h$, i.e.\ $C_{\rm hf}\in|\ko_X(20)|$.
\end{proof}

\begin{remark} Coming back to the variety of bitangents $F_X\subset{\rm Hilb}^2(X)$.
As pointed out by Kieran O'Grady, ${\rm Hilb}^2(X)$ can be seen as a degeneration of an EPW-sextic,
such that $F_X$  corresponds to the singular locus of the sextic. In the same spirit,
whenever an EPW-sextic is of the form
${\rm Hilb}^2(X')$ for some K3 surface $X'$, then the singular locus of the sextic is likely to give
an interesting antisymplectic auto-correspondence for $X'$. This can be applied to a generic K3 surface of genus
six, see \cite[Sec.\ 4]{OG2}. This should eventually lead to a picture quite similar to the one described here. In particular,
one finds as an analogue of the curve $C_{\rm hf}$ the curve of points in $X'$ with a conic in a certain ambient
Fano threefold with higher contact at this point which is expected to be a constant cycle curve.
\end{remark}

\begin{remark} We conclude by making two remarks on the dynamical aspects of the bitangent correspondence.

i) On may wonder whether the bitangent correspondence $x\mapsto y_i$ can play the role of an endomorphism
in other respects as well. E.g.\ endomorphisms of K3 surfaces have been  used to  prove potential density
of rational points on K3 surfaces
over number fields. Although the universal family of bitangents is also defined over the same field
as $X$, the construction seems of little use for this purpose. Indeed the surface $B_X$ is of general type and Lang's
conjecture would predict much fewer rational points on $B_X$ than on $X$ itself. In other words, mapping $x\in X$
to the other points of contact $y_i\in X$ of  bitangents at $x$, increases the residue field.

ii) If $C\subset X$ is a constant cycle curve in an arbitrary K3 surface and $f:X\congpf X$
is any automorphism, then $f(C)\subset X$ is again a constant cycle curve (of the same order).
But, since the generic K3 surface does not admit non-trivial automorphisms, this method
fails in the generic case. However, for the generic quartic $X\subset \PP^3$ the bitangent correspondence
can sometimes replace the missing automorphisms. Indeed, if $C\subset X$ is a constant cycle curve,
then  $p(i(p^{-1}(C)))$ is  again a constant
cycle curve. This can be applied to e.g.\ $C=C_{\rm hf}$, in which case $p^{-1}(C)=q^{-1}(D_{\rm hf})\cup C'$. The
curve $q^{-1}(D_{\rm hf})$ is of course invariant under the involution $i$, but $C'$ is not and produces
a new constant cycle curve $p(i(C'))$ in $X$. 
\end{remark}

\section{Finite fields}\label{sec:finitefields}
In this section we study K3 surfaces over finite fields $\FF_q$, $q=p^d$, with $p\ne2$.
Contrary to the case of K3 surfaces over $\bar\QQ$, it is easy to see that $\Ch^2(X)\cong\ZZ$
for any $X$ over $\bar\FF_p$. Indeed, any finite collection of points is contained
in some curve $C\subset X$. As one always finds a finite field $\FF_q\subset\bar\FF_p$
over which the finitely many points and the curve are defined, the finiteness of $\FF_q$-rational points of
$\Pic^0(C)$ and the torsion freeness of $\Ch^2(X)$ (over the algebraically closed field $\bar\FF_p$) are enough
to conclude.

But the Bloch--Beilinson philosophy in positive characteristic  also predicts  $\Ch^2(X)\cong\ZZ$
for $X$ over the algebraic closure of
$\FF_p(t)$. Surprisingly, explicit examples of K3 surfaces over function fields verifying the conjecture
can actually be worked out, unlike the case of K3 surfaces over number fields.

\begin{remark} For $k=\bar\FF_p$ the Chow group is not expected to increase when passing from $X$ over $k$
to $X\times_k{\overline{k(t)}}$. This does not hold for other fields. Indeed, similarly to Bloch's argument showing
that $\Ch^2(X)\to\Ch^2(X\times_k{k(\eta_X)})$ is not surjective for a K3 surface $X$ in arbitrary characteristic as long as 
 $\rho(X)\ne22$, Green, Griffiths, and Paranjape showed in \cite{GGP} that in  characteristic zero the Chow group
grows already when passing to an algebraically closed field of transcendence degree one over the base field. Cf.\ Corollary
\ref{cor:BGGPweak} for a weaker version that also works in positive characteristic.
\end{remark}

\subsection{}
Due to $\Ch^2(X)\cong\ZZ$ for a K3 surface $X$ over $\bar\FF_p$, any curve $C\subset X$ is a pointwise constant
cycle curve. But the stronger statement is also conjectured to hold due to the following

\begin{prop}\label{prop:BBCff}
Let $X$ be a K3 surface over $\bar\FF_p$. Then $X\times_{\overline{\FF}_p}{\overline{\FF_p(t)}}$
satisfies the Bloch--Beilinson conjecture, i.e.\ $\Ch^2(X\times_{\overline{\FF}_p}{\overline{\FF_p(t)}})\cong\ZZ$, if and only if
every curve $C\subset X$ is a constant cycle curve.
\end{prop}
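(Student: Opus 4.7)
My approach is to use Lemma \ref{lem:BVclass}(iii) to translate ``$C$ is a constant cycle curve'' into the equality $[\eta_C]=(c_X)_{\overline{k(\eta_C)}}$ in $\Ch^2(X\times_k\overline{k(\eta_C)})$, and then to exploit the accident that $K=\overline{\FF_p(t)}$ and $\overline{k(\eta_C)}$ are both algebraically closed extensions of $\bar\FF_p$ of transcendence degree one, hence isomorphic over $\bar\FF_p$ by Steinitz. In either direction the argument then reduces to identifying the base-change $\Ch^2(X\times_k\overline{k(\eta_C)})\cong\Ch^2(X_K)$ induced by any such field isomorphism.

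For the direct implication, assume $\Ch^2(X_K)\cong\ZZ$ and let $C\subset X$ be an integral curve. Fix a $\bar\FF_p$-isomorphism $\overline{k(\eta_C)}\cong K$. Transporting through it, the closed point $\eta_C\in X\times_k\overline{k(\eta_C)}$ becomes a closed point of $X_K$; its class has degree one, hence equals the generator $(c_X)_K$ of $\Ch^2(X_K)\cong\ZZ$. Transporting back gives $[\eta_C]=(c_X)_{\overline{k(\eta_C)}}$, and Lemma \ref{lem:BVclass}(iii) concludes that $C$ is a constant cycle curve.

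For the converse, assume every integral curve in $X$ is a constant cycle curve and let $y\in X_K$ be an arbitrary closed point. Since $K$ is algebraically closed, $y$ is $K$-rational, so composition with base change yields a morphism ${\rm Spec}\,K\to X$ with set-theoretic image some $\xi\in X$. I distinguish three cases according to $\dim\overline{\{\xi\}}$. If $\xi$ is closed, then $y$ is the base change of $\xi$ and $[y]=(c_X)_K$ because $\Ch^2(X)\cong\ZZ$, as recalled at the start of Section \ref{sec:finitefields}. The case $\xi=\eta_X$ is impossible: it would require a $\bar\FF_p$-embedding $k(\eta_X)\,\hookrightarrow K$, excluded since the transcendence degrees are $2$ and $1$. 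In the remaining case $\xi=\eta_C$ for an integral curve $C\subset X$, the induced embedding $k(\eta_C)\,\hookrightarrow K$ extends to $\overline{k(\eta_C)}\,\hookrightarrow K$, which is in fact an isomorphism by the same transcendence-degree count; under this identification $y$ coincides with $\eta_C$ viewed as a closed point of $X\times_k\overline{k(\eta_C)}$. The constant cycle hypothesis combined with Lemma \ref{lem:BVclass}(iii) therefore gives $[y]=(c_X)_{\overline{k(\eta_C)}}$, which transports to $(c_X)_K$.

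In all three cases every closed point of $X_K$ has class $(c_X)_K$; since $\Ch^2$ of a surface is generated by its closed points, the group $\Ch^2(X_K)$ is cyclic on $(c_X)_K$, and since $\deg(c_X)_K=1$ it is isomorphic to $\ZZ$. The hardest step is the analysis of the case $\xi=\eta_C$; a subtlety to verify is that any abstract $\bar\FF_p$-isomorphism $\overline{k(\eta_C)}\cong K$ transports $(c_X)_{\overline{k(\eta_C)}}$ to $(c_X)_K$, but this is automatic from the definition of the Beauville--Voisin class by pull-back from $c_X\in\Ch^2(X)$.
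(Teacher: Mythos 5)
Your proposal is correct and follows essentially the same route as the paper: both directions hinge on the Steinitz isomorphism $\overline{k(\eta_C)}\cong\overline{\FF_p(t)}$ together with the characterization of constant cycle curves from Lemma \ref{lem:BVclass}/\ref{lem:BVclass2}, and the converse is the same case analysis on whether a closed point of $X_{\overline{\FF_p(t)}}$ projects to a closed point or to the generic point of a curve (with $\eta_X$ excluded by transcendence degree). Your use of Lemma \ref{lem:BVclass}(iii) in place of Lemma \ref{lem:BVclass2} is an inessential variant, since $\Ch^2(X)=\ZZ\cdot c_X$ for $X$ over $\bar\FF_p$ makes the two characterizations coincide.
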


\begin{proof} Clearly, $\Ch^2(X\times_{\overline{\FF}_p}{\overline{\FF_p(t)}})\cong\ZZ$ if and only
if the pull-back
\begin{equation}\label{eqn:pb}
\Ch^2(X)\to\Ch^2(X\times_{\overline{\FF}_p}{\overline{\FF_p(t)}})
\end{equation}
 is an isomorphism. Due to Lemma
\ref{lem:BVclass2}, a curve $C\subset X$ is a constant cycle curve if and only if
$[\eta_C]\in\Ch^2(X\times{\overline{k(\eta_C)}})$ is in the image of the pull-back. Assuming the Bloch--Beilinson
conjecture, the latter now follows from choosing an embedding $\bar\FF_p\subset k(\eta_C)\subset \overline{k(\eta_C)}\cong\overline{\FF_p(t)}$.

Conversely, a closed point $x\in X\times{\overline{\FF_p(t)}}$ either projects to a closed point in $X$
or the generic point of a curve $C\subset X$. In the first case, $[x]$ is in the image of (\ref{eqn:pb}),
whereas in the latter $[x]=[\eta_C]$ with the generic point $\eta_C\in C$ viewed as a closed point
of $X\times{\overline{k(\eta_C)}}=X\times{\overline{\FF_p(t)}}$. But if $C$ is a constant cycle curve, then up to torsion
$[\eta_C]$ is in the image of $\Ch^2(X)\to\Ch^2(X\times{k(\eta_C)})$ and, therefore, $[x]$ is in the image
of (\ref{eqn:pb}).
\end{proof}

We stress again, that due to \cite{GGP} the corresponding statement is false for $X$ over $\bar\QQ$, i.e.\ there
always exist curves $C\subset X$ which are not constant cycle curves. However, according to Conjecture \ref{conj:BBC}
all curves are  expected
to be pointwise constant cycle curves and it seems likely that every point is at least contained in 
a constant cycle curve (cf.\ Conjecture \ref{conj:BBCgeom}). Note that any linear system 
$|L|$ could contain countably many constant cycle curves. So a priori the finiteness of constant cycle curves of
bounded order does not prove the existence of curves that are not constant cycle curves. The existence
 for surfaces over $\bar\QQ$ as proved in \cite{GGP} eventually relies on results of Terasoma and in fact shows
 the existence of infinitely many such curves in any ample linear system.

 \subsection{} The Bloch--Beilinson conjecture for function fields is related to the conjectured
 finite-dimen\-sionality in the sense of Kimura and O'Sullivan.  To be more precise, consider a
 K3 surface $X$  and a curve $C\subset X$ over a finite field $\FF_q$. Suppose Kimura--O'Sullivan finite-dimensionality
 holds for $X$ and $\widetilde C$ and hence for $X\times \widetilde C$ (cf.\ \cite[Prop.\ 5.10]{Kimura}).
 Then the Tate conjecture $T^1$ for $X$ (using the recent  \cite{Charles,Keerthi,Maulik}) and $\widetilde C$, which implies
 the Tate conjecture $T^1$ for $X\times \widetilde C$ and by duality the Tate conjecture $T^2$ for $X\times \widetilde
 C$ (see \cite{Tate} or
 \cite[Cor.\ 2.2, Thm.\ 1.4]{Milne}),
 yields $$\Ch^2(X\times\widetilde C)\otimes_\ZZ\QQ_\ell\congpf H^4((X\times\widetilde C)_{\bar\FF_q},\QQ_\ell(2))^{Gal(\bar\FF_q/\FF_q)},$$ see \cite[Thm.\ 0.4,
 Thm.\ 5.2]{Jannsen}, \cite[Thm.\ 1]{Kahn}, or \cite[Thm.\ 4.2]{Andre}. The cycle $Z_C$ is homologically trivial, i.e. $$0=[Z_C=\Delta_{f_C}-C\times[L_0]-\{x_0\}\times \widetilde C]\in H^4((X\times\widetilde C)_{\bar\FF_q},\QQ_\ell(2))^{Gal(\bar\FF_q/\FF_q)}$$ (see Section \ref{sec:ZC}) and, under the assumption of Kimura--O'Sullivan finite-dimensionality, the cycle $Z_C$ on $X\times \widetilde
 C$ must then also be rationally equivalent to zero up to torsion. Hence, its restriction $\kappa_C\in\Ch^2(X\times{k(\eta_C)})$ is also torsion. Thus, the Bloch--Beilinson conjecture holds for the K3 surface $X\times{\FF_q(t)}$, i.e.\ $\Ch^2(X\times{\overline{\FF_q(t)}})\cong\ZZ$.  
 
 \smallskip
 
 This remark immediately produces actual examples of K3 surfaces over function fields for which the
 Bloch--Beilinson conjecture can be confirmed. Note that this is in contrast to the number field case where
 not a single K3 surface is known to satisfy $\Ch^2(X_{\bar\QQ})\cong\ZZ$. It would also be very interesting to find
 an example of a non-isotrivial K3 surface over $\overline{\FF_p(t)}$ satisfying the Bloch--Beilinson conjecture.

 \begin{exa}
Kimura--O'Sullivan finite-dimensionality is known for rational Chow motives in the tensor subcategory generated by Chow motives of abelian varieties, cf.\ \cite[Ex.\ 9.1]{Kimura}. Also, quotients of finite-dimensional Chow motives are again finite dimensional.
Hence, Kummer surfaces are known to be finite-dimensional. Therefore, any Kummer surface $X$ over $\FF_q$ yields
a K3 surface $X_{\FF_q(t)}$ over $\FF_q(t)$ which satisfies the Bloch--Beilinson conjecture, i.e.\ $\Ch^2(X\times{\overline{\FF_q(t)}})\cong\ZZ$. 
Finite dimensionality of Kummer surfaces is eventually deduced
from the fact that they are dominated by products of curves and, in fact, in \cite{Schoen} the
Bloch--Beilinson conjecture for $X\times {\FF_q(t)}$ was proved for any variety  $X$ dominated by a product of curves. 

The reduction modulo $p$ of Voisin's example in \cite[Sect.\ 3.3]{VoisinRemarks}
is likely to provide other examples. See \cite{Pedrini} for more on finite-dimensionality of K3 
surfaces (in characteristic zero).
\end{exa}
 
\begin{prop}\label{cor:allccconKummer}
Let $X$ be a K3 surface over a finite field $\FF_q$ for which Kimura--O'Sullivan finite-dimensionality holds,
e.g.\ $X$ a Kummer surface. Then every curve in $X_{\bar\FF_q}$ is a constant
cycle curve.\qed 
\end{prop}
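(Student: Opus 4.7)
The plan is to reduce to the setup of the paragraphs immediately preceding the proposition and to run that sketch rigorously. First I would fix an integral curve $C\subset X_{\bar\FF_q}$ and, by spreading out, descend $C$ to some finite extension $\FF_{q'}/\FF_q$, so that $C\subset X_{\FF_{q'}}$ is defined over a finite field. By Proposition \ref{lem:ccc}(i), being a constant cycle curve is invariant under algebraically closed base change, so it suffices to show that this descended $C$ is a constant cycle curve. I would then attach to $C$ the cycle $Z_C=\Delta_{f_C}-C\times[L_0]-\{x_0\}\times\widetilde C$ of Section \ref{sec:ZC}, which lives on the smooth projective threefold $X\times\widetilde C$ over $\FF_{q'}$, is homologically trivial, and restricts on the generic fibre $X_{k(\eta_C)}$ to $\kappa_C$.

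The heart of the argument is the passage from ``homologically trivial'' to ``torsion in the Chow group''. Since $\mathfrak{h}(X)$ is Kimura--O'Sullivan finite-dimensional by hypothesis and curves are trivially so, the tensor product $\mathfrak{h}(X\times\widetilde C)$ is also finite-dimensional by \cite[Prop.\ 5.10]{Kimura}. The Tate conjecture $T^1$ for $X$ is now known by \cite{Charles,Maulik,Keerthi} and is trivial for $\widetilde C$; via K\"unneth and duality between $T^1$ and $T^2$ on a smooth projective threefold one obtains $T^2$ for $X\times\widetilde C$ (see \cite{Tate} or \cite[Cor.\ 2.2, Thm.\ 1.4]{Milne}). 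Combining the Tate conjecture with finite-dimensionality yields (see \cite[Thm.\ 0.4, Thm.\ 5.2]{Jannsen}, \cite[Thm.\ 1]{Kahn}, or \cite[Thm.\ 4.2]{Andre}) an isomorphism
\[
\Ch^2(X\times\widetilde C)\otimes_{\ZZ}\QQ_\ell \;\simeq\; H^4((X\times\widetilde C)_{\bar\FF_q},\QQ_\ell(2))^{Gal(\bar\FF_q/\FF_{q'})},
\]
which forces the homologically trivial class $[Z_C]$ to be torsion in $\Ch^2(X\times\widetilde C)$.

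Restricting along $\Ch^2(X\times\widetilde C)\to\Ch^2(X\times k(\eta_C))$ then gives that $\kappa_C$ is torsion, so $C$ is a constant cycle curve in the sense of Section \ref{sec:defkappa}, finishing the proof. For the specific case of a Kummer surface, finite-dimensionality is supplied by the example preceding the proposition: $X$ is dominated by an abelian surface, so $\mathfrak{h}(X)$ lies in the tensor subcategory generated by motives of abelian varieties, whose objects are Kimura-finite-dimensional by \cite[Ex.\ 9.1]{Kimura}. The only genuinely delicate ingredient in the whole argument is the combination of Tate and Kimura--O'Sullivan finite-dimensionality which upgrades vanishing of a cohomology class to rational equivalence modulo torsion; this, however, is a by now standard consequence of the cited results, so the proof reduces to a careful bookkeeping of what was already sketched by the author in the paragraphs immediately preceding the proposition.
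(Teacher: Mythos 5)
Your proposal is correct and follows essentially the same route as the paper: the proposition carries a \qed because its proof is exactly the discussion in the paragraphs preceding it, namely that Kimura--O'Sullivan finite-dimensionality of $X\times\widetilde C$ together with the Tate conjecture (known for $X$ by \cite{Charles,Keerthi,Maulik} and trivial for curves) identifies $\Ch^2(X\times\widetilde C)\otimes\QQ_\ell$ with Tate classes, forcing the homologically trivial $Z_C$ to be torsion and hence $\kappa_C$ to be torsion. Your only addition is the explicit spreading-out of $C$ to a finite field and the appeal to Proposition \ref{lem:ccc}(i), which the paper leaves implicit; this is a correct and welcome piece of bookkeeping rather than a different argument.
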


By  Proposition \ref{prop:ccconrat}, the result holds also true for unirational (and hence by \cite{Liedtke} for all supersingular)  K3 surfaces.
 
\begin{remark}
In the situation of the corollary and assuming that $X$ is not unirational, it would be interesting to decide whether  among the curves in a fixed linear system $|L|$, which are all constant cycle curves, there are at most finitely many of bounded order, i.e.\ whether the analogue of Proposition \ref{prop:finite} holds.
Also, is there an explicit example (for which Kimura--O'Sullivan finite-dimensionality is known) where one can show
that all curves are constant cycle curves directly (i.e.\ geometrically)?
 \end{remark}

\subsection{}  We conclude this section with a result lending evidence to  Conjecture \ref{conj:BBCgeom}.

 \begin{prop}\label{prop:cccallover}
 Every closed point $x\in X$ in a K3 surface over $\bar\FF_p$
 is contained in a constant cycle curve $x\in C\subset X$.
 \end{prop}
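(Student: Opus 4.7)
The plan is to adapt the torsion-multi-section construction of Lemma \ref{lem:ccctorsion} and Section \ref{sec:BVexas} to produce, through an arbitrary closed point $x \in X$, a constant cycle curve, exploiting the fact that over $\bar\FF_p$ every closed point of an abelian variety is torsion.

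If $X$ is unirational --- equivalently, supersingular for $p \ne 2$ by \cite{Liedtke} --- then $X$ is covered by rational curves (take the image in $X$ of a general line through a preimage of $x$ under a dominant rational map $\PP^2 \dashrightarrow X$), and rational curves are constant cycle curves of order one by Lemma \ref{sec:ratcurves}. So I reduce to the non-unirational case.

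In the non-unirational case, Section \ref{sec:BVexas} supplies a dominating family of elliptic curves $\pi : \kc \to T$ with a surjective morphism $p : \kc \to X$ and, after replacing $T$ by an \'etale cover, a section $\sigma_0 : T \to \kc$ whose image $C_0 \subset X$ is a rational curve. Given $x \in X(\bar\FF_p)$, I would choose $y \in p^{-1}(x)$ lying in the smooth locus of $\pi$, and set $t := \pi(y)$. The essential input from the finite-field setting is that $y$ and $\sigma_0(t)$ are both defined over some finite subfield $\FF_q \subset \bar\FF_p$, so that $y - \sigma_0(t)$ lies in the finite group $\Pic^0(\kc_t)(\FF_q)$ and is torsion of some order $n$. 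Taking a component $D$ of the relative $n$-torsion subscheme $\kc[n] \subset \kc$ through $y$, its image $C := p(D) \subset X$ is an integral curve through $x$.

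To verify that $C$ is a constant cycle curve, I would run the argument of Lemma \ref{lem:ccctorsion} in this generalized setting. The generic point $\eta_D \in D$ represents, viewed inside the generic fiber $\kc_{\eta_T}$, an $n$-torsion point with respect to the origin $\sigma_0(\eta_T)$, so the class $[\eta_D] - [\sigma_0(\eta_T)]_{k(\eta_D)}$ is $n$-torsion in $\Ch^1(\kc_{\eta_T} \times_{k(\eta_T)} k(\eta_D))$. Pushing this class forward under $p$, and using that $C_0$ being rational yields $[\eta_{C_0}] = (c_X)_{k(\eta_{C_0})}$, identifies it --- up to the generic degree of $D \to C$ --- with $\kappa_C$, which is therefore torsion. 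The main obstacle is to ensure, for every $x$, that some preimage $y \in p^{-1}(x)$ can be chosen in a smooth fiber of $\pi$ and outside the exceptional locus of $p$. For generic $x$ this is automatic; for special $x$ I would exploit the flexibility of the construction of Section \ref{sec:BVexas} by varying the dominating family (one such family arises from each sufficiently ample linear system containing a nodal rational curve), so that some family places $x$ in a smooth fiber. As a fallback, the smooth locus of a singular fiber is a connected commutative group scheme over $\bar\FF_p$ whose closed points are all torsion, so the same $n$-torsion argument still applies.
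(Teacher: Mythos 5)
Your proposal is correct and follows essentially the same route as the paper: reduce to the non-unirational case via rational curves, take the dominating family of elliptic curves with a section over a rational curve from Section \ref{sec:BVexas}, and use that over $\bar\FF_p$ any point of a smooth fibre is automatically torsion, hence lies on a torsion multi-section whose image is a constant cycle curve by the argument of Lemma \ref{lem:ccctorsion}. The only difference is cosmetic: for preimages landing in singular fibres the paper simply observes that these fibres are configurations of rational curves and hence map to constant cycle curves of order one, which is cleaner than your group-scheme fallback.
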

 
\begin{proof}
If $X$ is unirational, then every point $x\in X$ is in contained in a rational curve, which yields the assertion.
Thus, we may assume that $X$ is not unirational.

Next we  use the construction of Section \ref{sec:BVexas}. So we pick a smooth elliptic surface 
$\kc\to T$ with a surjective morphism $p:\kc\to X$.
As before,  by a further base change,
we can assume that  $\kc\to T$
comes with a zero-section that maps to a constant cycle curve in $X$. Now, all singular fibres are rational and,
therefore, map to constant cycle curves in $X$. A point $x\in\kc_t$ in a smooth fibre
is automatically torsion and hence contained in one of the curves
$C_n\subset\kc$ of fibrewise torsion points.
By Section \ref{sec:BVexas}, the curves $C_n$ are constant cycle curves. This 
proves the assertion.
 \end{proof}

Note that a stronger result for Kummer surfaces, namely the existence of a rational(!) curve through every point, has been
proved already by Bogomolov and Tschinkel in \cite{BT2}. The assumption that $X$
is Kummer is in \cite{BT2} used for an explicit geometric construction for Jacobian
Kummer surfaces.


\section{Further questions and remarks}

\subsection{} Let $X$ be a  K3 surface over an arbitrary algebraically closed
field $k$.  It seems not impossible that any closed point $x\in X$ with $[x]=c_X$ is in fact contained in a constant
cycle curve $C\subset X$ (see Conjecture \ref{conj:BBCgeom}). Corollary \ref{cor:cccthroughpt} provides some weak evidence. Note however that this should fail in general (probably whenever $k$ is  uncountable and algebraically closed)
when  constant cycle curves are replaced by rational curves. E.g.\ for a complex K3 surface
and a non-rational constant cycle curve $C\subset X$ (examples have been given above), only countably
many of the points $x\in C$ can be contained in some rational curve. Indeed, there are at most
countably many rational curves contained in $X$.

\subsection{}\label{sec:counting}

Due to Proposition \ref{prop:finite}, on a complex K3 surface only finitely many curves
in a fixed linear system $|L|$ can be constant cycle curves of a given order $n$. This prompts two questions:

i) Can the number be determined, e.g.\ in geometrically interesting situations?

ii) Is this number deformation
invariant?

\smallskip

Both aspects have been addressed extensively for rational curves in the context of the Yau--Zaslow
conjecture, see \cite{KMPS}. Once the counting is done properly, the number of rational curves (counted
with multiplicities) is deformation invariant and is given by a certain generating series.

For the time being it is not at all clear (to me) whether a similar picture is to be expected 
for constant cycle curves. In fact, the observation that apart from rational curves also non-rational
curves (even smooth ones) can occur as constant cycle curves of order one is a little suspicious. One would
need to add those to the (Gromov--Witten) number of rational curves.

\subsection{} It is to be expected that an analogous picture will emerge for curves $C\subset X$ that are special
with respect to O'Grady's filtration (see \cite{OG}):
$$S_0(X)\subset S_1(X)\subset \ldots\subset S_d(X)\subset\ldots\subset \Ch^2(X).$$
Here, $S_d(X)$ is the set of cycles that can be written as $[Z]+m\cdot c_X$
with $Z$ an effective cycle of degree $d$. Equivalently, $\alpha\in S_d(X)$ if and only if there exists 
a possibly reducible curve $C\subset X$ such that $g(\widetilde C)\leq d$ and
$\alpha\in{\rm Im}(f_{C*}:\Pic(\widetilde C)\to\Ch^2(X))$ (see \cite{OG,VoisinHOG}). In particular, $S_0(X)=\ZZ \cdot
c_X$.

Note that the kernel of $f_{C*}:\Pic^0(\widetilde C)\to\Ch^2(X)$ is a countable union of translates
$L_i+A_i$ of abelian subvarieties $A_i\subset \Pic^0(\widetilde C)$. One defines $\dim({\rm Ker}(f_{C*}))$
as the minimum of all dimensions $\dim(A_i)$. 
So, $\dim({\rm Ker}(f_{C*}))=g$ if and only if $C$ is a constant cycle curve (we work over $\CC$),
which is equivalent to ${\rm Im}(f_{C*})\subset S_0(X)$. More generally,
it is not difficult to see that $\dim({\rm Ker}(f_{C*}))\geq g-d$ implies ${\rm Im}(f_{C*})\subset S_d(X)$ and
I would expect that with the techniques from \cite{VoisinSympl} also the converse can be proved.

Then in any linear system $|L|$ the locus of curves $C$ with ${\rm Im}(f_{C*})\subset S_d(X)$ 
should be a countable union of Zariski closed subsets, but in order to get honest Zariski closed subsets,
one would first need to introduce the analogue of the order of a constant cycle curve and then restrict to those
of finite order.

For a lack of a better name one could call a curve $C\subset X$, say integral, \emph{$d$-special}
if ${\rm Im}(f_{C*})\subset S_d(X)$. Of course, there would not be anything special about
$C$ for $d\geq g(\widetilde C)$ and $0$-special curves would simply be constant cycle curves.
So the first question one needs to address is how to define the order of a $d$-special curve
for $d<g(\widetilde C)$. 


\subsection{} We have not yet explored constant cycle curves from the infinitesimal point of view.
As explained by Bloch, $H^2(X,\ko_X)\otimes\Omega_{\CC/\QQ}$ should be viewed
as the `tangent space' of $\Ch^2(X)$. For any curve $C\subset X$ the
induced $f_{C*}:\Pic^0(\widetilde C)\to \Ch^2(X)$ induces a natural map between the tangent spaces
$H^1(\widetilde C,\ko_{\widetilde C})\to H^2(X,\ko_X)\otimes\Omega_{\CC/\QQ}$. For a constant cycle curve the map
should be trivial and, as addressed in \cite{GGBook}, it is interesting to see how much information
about $f_{C*}$ is actually encoded by the derivatives (at all points). Is there anything special about the derivative
in points $x\in C$ with $[x]=c_X$?

\section{Appendix  by Claire Voisin}
The aim of this appendix is to prove Conjecture \ref{conj:ccc} for the generic (and not
only general!) K3 surface. More precisely, we prove

\begin{thm}\label{thm:app}
Let $X$ be  a complex K3 surface admitting a non-isotrivial
one-parameter family of elliptic curves which is not an elliptic pencil. Then X
admits infinitely many constant cycle curves  of bounded order, the union of
which is dense in the classical topology.
\end{thm}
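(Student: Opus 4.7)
Let $\pi:\mathcal{C}\to T$ be the given non-isotrivial family with dominant map $p:\mathcal{C}\to X$, which is generically finite of degree $d>1$ since the family is not an elliptic pencil. Following the footnote, the strategy is to produce the required curves as images $D_n:=p(n\cdot\Sigma_0)\subset X$, where $\Sigma_0\subset\mathcal{C}$ is a suitable multi-section of $\pi$ and $n\cdot\Sigma_0$ denotes its fiberwise $n$-th multiple in the relative group law. This mimics the \cite{BT} production of rational points, substituting constant cycle curves for rational points.

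\textbf{Setup.} Fix a rational curve $R\subset X$ (existing by Bogomolov--Mumford). After replacing $T$ by a suitable \'etale cover, assume that some component of $p^{-1}(R)$ is a section $\sigma_0:T\to\mathcal{C}$, serving as the zero-section on each smooth fiber $\mathcal{C}_t$; note $[p(\sigma_0(t))]=c_X$ for all $t$. Fix in addition a constant cycle curve $D_0\subset X$ of some bounded order $n_0$ (e.g.\ another rational curve, or the fixed-point curve of a non-symplectic automorphism as in Section \ref{sec:auto}) whose preimage $\Sigma_0:=p^{-1}(D_0)$ is a multi-section of $\pi$ representing a \emph{non-torsion} class in the relative Jacobian $J(\mathcal{C}/T)\to T$. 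For each $n\in\mathbb{Z}$ set $n\cdot\Sigma_0:=\{n\cdot y : y\in\Sigma_0\}$ (multiplication in $\mathcal{C}_{\pi(y)}$) and $D_n:=p(n\cdot\Sigma_0)\subset X$.

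\textbf{Key computation.} Assuming $p|_{\mathcal{C}_t}$ is birational onto $E_t:=p(\mathcal{C}_t)\subset X$, the pushforward $p_*:\mathrm{Pic}^0(\mathcal{C}_t)\to\Ch^2(X)$ is a group homomorphism. For $x=n\cdot y\in n\cdot\Sigma_0\cap\mathcal{C}_t$ one computes
\[
[p(x)]-c_X\;=\;p_*(x-\sigma_0(t))\;=\;n\cdot p_*(y-\sigma_0(t))\;=\;n\cdot\bigl([p(y)]-c_X\bigr).
\]
Since $[p(y)]-c_X$ is $n_0$-torsion (because $p(y)\in D_0$), so is $[p(x)]-c_X$. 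Hence each $D_n$ is a pointwise constant cycle curve with uniform order bound $n_0$; by the proof of Proposition \ref{prop:Voisin} this upgrades to $D_n$ being a constant cycle curve of order dividing $n_0$.

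\textbf{Distinctness, density, and main obstacle.} The non-torsion assumption on $\Sigma_0$ makes $\{n\cdot\Sigma_0\}_{n\in\mathbb{Z}}$ pairwise distinct, yielding infinitely many distinct $D_n$; density of $\bigcup_n D_n$ in $X(\mathbb{C})$ then follows by a Kronecker-type argument — for each generic smooth fiber $\mathcal{C}_t(\mathbb{C})\cong\mathbb{C}/\Lambda_t$ the orbit of a non-torsion point has closure of positive real dimension, and combined with the non-isotrivial variation of $\Lambda_t$ this suffices for classical density. The main obstacle is to produce the non-torsion $\Sigma_0$: one must exhibit a constant cycle curve $D_0\subset X$ whose preimage in $\mathcal{C}$ is non-torsion in the relative Jacobian, or equivalently, that the kernel of $p_*:\mathrm{Pic}^0(\mathcal{C}_t)\to\Ch^2(X)_0$ strictly exceeds its torsion subgroup for generic $t$. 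The non-isotriviality hypothesis enters precisely here, via a Mordell--Weil-style argument bounding the torsion multi-sections while producing sufficiently many constant cycle curves.
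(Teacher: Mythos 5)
Your overall strategy --- producing new constant cycle curves by fiberwise multiplication of a non-torsion multi-section lying over a constant cycle curve --- is the one used in the appendix, but there are two genuine gaps, and they are exactly the two points where the actual proof has to work hardest. The first concerns the order bound. Your ``key computation'' takes place in $\Ch^2(X)$, where it only shows that each $D_n$ is a \emph{pointwise} constant cycle curve (since $\Ch^2(X)$ is torsion free, ``$n_0$-torsion'' there just means zero). Passing from this to ``$D_n$ is a constant cycle curve of order dividing $n_0$'' via Proposition \ref{prop:Voisin} does not work: the converse direction of that proposition rests on Bloch--Srinivas and produces \emph{some} integer $n$ with $n\cdot\kappa_{D_n}=0$, with no control on $n$; a priori this integer depends on $D_n$ and could grow with $n$. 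Since the theorem asserts density of constant cycle curves of \emph{bounded} order, this is the crux. The appendix handles it by working directly with the cycles $\Gamma_m-\{x_0\}\times U$ on $X\times U$ and applying Bloch--Srinivas once to the summation correspondence $\Gamma_\sigma-\Gamma_{p\circ pr_1}-\Gamma_{p\circ pr_2}+\{x_0\}\times\kc\times_T\kc$; restricting the resulting identity to the image of $x\mapsto(x,(m-1)\cdot x)$ yields by induction a single integer $N$ that works for all $m$ simultaneously. Some argument of this kind is unavoidable.

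The second gap is the existence of the non-torsion multi-section $\Sigma_0$. You correctly identify this as ``the main obstacle'' but then only gesture at ``a Mordell--Weil-style argument'', which is not a proof; moreover this is precisely where the hypothesis that the family is not an elliptic pencil (condition ii) of Section \ref{sec:notation}) must enter, and you never use it. The appendix argues by contradiction: if every component of $p^{-1}(p(\kc_n))$ were fibrewise torsion, then flatness of local analytic sections would propagate through the correspondence $p^{-1}(p(\cdot))$ (Lemma \ref{lem:flatalways}, proved by analytic density of torsion sections among flat ones); choosing, at a generic point $y=p(x)=p(x')$ where two fibre images meet transversally, a flat local section through $x$ whose image is tangent to the second fibre image forces the corresponding component through $x'$ to lie inside a fibre, whence $p^{-1}(E')$ produces a global algebraic multi-section that is flat and non-torsion, contradicting non-isotriviality (Lemma \ref{lem:isotrivial}). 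Finally, your density argument also elides a real point: the closure of $\bigcup_n D_n$ in a fibre $\RR^2/\ZZ^2$ could a priori be a finite union of circles rather than the whole torus; ruling this out again uses non-isotriviality, via the monodromy action on $H_1(\kc_t,\ZZ)$ being of finite index in ${\rm Sl}(2,\ZZ)$.
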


Since the assumptions are Zariski open conditions and families of elliptic curves satisfying the assumptions
can be found for the general K3 surface of Picard number one, the theorem does imply Conjecture \ref{conj:ccc}
for the generic complex K3 surface.

\subsection{}\label{sec:notation}
To set up notation, we spell out the assumptions of the theorem: We assume that there exist
a smooth elliptic surface $q:\kc\to T$ and a surjective morphism $p:\kc\to X$ with the following properties:

i) The family $q:\kc\to T$ is not isotrivial, i.e.\ there does not exist any dominant quasi-finite
morphism $T'\to T$ such that the base change $q':\kc':=\kc\times_TT'\to T'$ is a trivial family
of elliptic curves.

ii) For generic $y\in X$ there exist at least two smooth fibres $\kc_t,\kc_{t'}\subset \kc$ such that
$p(\kc_t)$ and $p(\kc_{t'})$ intersect transversally in $y$.

\smallskip

As in Section \ref{sec:BVexas}, we can always reduce to the situation that $q:\kc\to T$ comes with a zero-section,
the image of which  is a constant cycle curve in $X$ (we could even assume the image to be a rational curve).
The origin of a smooth fibre $\kc_t$ fixed by the zero section will be called $x_t$
and torsion in the fibres is considered with respect to this choice. 

\subsection{}
The technique to produce new constant cycle curves is by multiplying a given one
with respect to the additive structure of the fibres of $q:\kc\to T$. Under the above hypotheses one proves:

\begin{lem}\label{lem:transl}
Assume $D\subset \kc$ is a multi-section of $q:\kc\to T$ such that:

i) The image $C:=p(D)$ is a constant cycle curve of order $n$ and

ii) The intersection of $D$ with the generic fibres $\kc_t$ contains
a non-torsion point.

Then there exists an $N>0$ such that
the union of all constant cycle curves of order $\leq N$ is dense in the classical topology.
\end{lem}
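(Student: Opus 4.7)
The plan is to construct infinitely many constant cycle curves by multiplying $D$ in the group law of the elliptic fibration $q$ and pushing forward via $p$. For each nonzero integer $m$ I define $D_m \subset \kc$ as the closure of $[m](D\cap\kc^\circ)$, where $\kc^\circ$ is the smooth locus of $q$ and $[m]$ is the multiplication-by-$m$ endomorphism of the group scheme $\kc^\circ$ over $T^\circ$, and set $C_m := p(D_m) \subset X$. The non-torsion assumption on $D$ ensures that $[m] : D \to D_m$ is birational for every $m\ne 0$, so the $D_m$ are pairwise distinct multi-sections of the same degree as $D$.

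The key order-bound calculation takes place on the generic fibre $E = \kc_\mu$. Writing $\xi\in E$ for the generic point of $D$, viewed as a $K$-point of $E$ for $K := k(\xi) = k(D)$, the group isomorphism $E \simeq \Pic^0(E)$, $x \mapsto [x]-[o]$, gives $[[m]\xi]-[o] = m\cdot\bigl([\xi]-[o]\bigr)$ in $\Pic^0(E_K)$. After the reduction recalled in Section \ref{sec:BVexas}, I may assume the zero-section maps to a rational (hence order-one constant cycle) curve in $X$, so that $p_{K*}[o] = (c_X)_K$. Pushing the identity forward by $p_{K*}\colon \Ch^1(E_K) \to \Ch^2(X \times K)$ yields
\[
[p([m]\xi)]_K - (c_X)_K \;=\; m \cdot \bigl([p(\xi)]_K - (c_X)_K\bigr) \quad \text{in } \Ch^2(X \times K).
\]
The right-hand side is $m$ times the base change of $\kappa_C$, hence annihilated by $n$; the left-hand side is the image of $\kappa_{C_m}$ under the base change along the finite extension $k(\eta_{C_m}) \subset K$ of degree at most $d := \deg p$. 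Since the kernel of this base change is $d$-torsion, I conclude $N \cdot \kappa_{C_m} = 0$ with $N := nd$, uniformly in $m$.

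For density I use the non-torsion hypothesis to fix, for generic $t \in T^\circ$, a non-torsion point $y_t \in D \cap \kc_t$. Since $\kc_t \simeq \CC/\Lambda_t$ in the classical topology, the cyclic subgroup $\{[m]y_t\}_{m\in\ZZ}$ is dense in $\kc_t$. Hence $\bigcup_m D_m$ meets $\kc_t$ in a dense subset for a dense set of $t$, so is itself dense in $\kc$, and by continuity and surjectivity of $p$ its image $\bigcup_m C_m$ is dense in $X$ in the classical topology. Since a countable union of curves can only be classically dense if infinitely many of them are distinct, this produces infinitely many constant cycle curves of order at most $N$ with dense union.

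The main obstacle is the order-bound step: one must transport the clean group-law identity in $\Pic^0$ of the generic elliptic fibre to a controlled statement about $\kappa_{C_m}$ in $\Ch^2(X \times k(\eta_{C_m}))$, carefully tracking residue fields. The generic point of $D_m$ has the same function field as $D$ (because $[m]$ is birational on $D$), but the generic point of $C_m$ has a smaller function field of bounded finite index, which is the source of the factor $d$ in the final bound. A secondary, mostly bookkeeping, issue is the reduction to a zero-section mapping birationally to a rational curve in $X$, in order that the equality $p_{K*}[o]=(c_X)_K$ holds exactly rather than merely up to further bounded torsion coming from the order of the constant cycle curve swept out by the zero-section.
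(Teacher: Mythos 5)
Your order-bound step is correct and in fact takes a cleaner route than the paper's. Working directly in $\Pic^0(E_K)$ of the generic fibre base-changed to $K=k(\eta_D)$ and pushing forward, you obtain the identity $(\kappa_{C_m})_K=m\cdot(\kappa_C)_K$ in $\Ch^2(X\times_k K)$ on the nose. The paper instead proves the corresponding identity of correspondences in $\Ch^2(X\times U)$ for an open subset $U\subset\kc$, which via Bloch--Srinivas only holds after multiplication by an integer $N_m$ a priori depending on $m$; making this uniform costs an extra argument (the summation correspondence on $\kc\times_T\kc$ and an induction on $m$). Your exact identity at the generic point of $D$ makes the uniformity automatic. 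Two small corrections: $[m]\colon D\to D_m$ need not be birational even for non-torsion $D$ (conjugates of $\eta_D$ over $k(\mu)$ may differ by $m$-torsion, so the degree can jump, though it stays $\leq (D.\kc_t)$), and accordingly the degree of $k(\eta_{C_m})\subset K$ is bounded by $(D.\kc_t)\cdot\deg(p)$ rather than by $\deg(p)$. Both bounds are uniform in $m$, so only the constant $N$ changes.

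The density step, however, has a genuine gap. It is not true that the cyclic subgroup generated by a non-torsion point $y_t$ of the real torus $\kc_t\cong\RR^2/\ZZ^2$ is dense: writing $y_t=(y_1,y_2)$, the closure of $\ZZ\cdot y_t$ is the full torus only when $1,y_1,y_2$ are linearly independent over $\QQ$; if there is exactly one rational relation, the closure is a finite union of translates of a circle ${\mathbb S}^1$, even though $y_t$ is non-torsion. So your argument only yields density of $\bigcup C_m$ in the Zariski topology. Excluding the union-of-circles case is precisely the delicate point of the paper's proof: it invokes the non-isotriviality of $\kc\to T$ (assumption i) of Section \ref{sec:notation}, which is not quoted in the statement of the lemma but is essential here) together with a monodromy argument --- a monodromy-invariant finite union of circles in the fibres would force the monodromy on $H_1(\kc_t,\ZZ)$ to act through a finite group, contradicting that the monodromy group has finite index in ${\rm Sl}(2,\ZZ)$. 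Without this (or an equivalent) argument, the classical density claimed in the lemma is not established.
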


\begin{proof} If $C\subset X$ is the image of a finite morphism $f:D\to X$
from an integral curve $D$, then the image
$\kappa_C\times k(\eta_D)$ of $\kappa_C$ under the base change map
$$\Ch^2(X\times k(\eta_C))\to\Ch^2(X\times k(\eta_D))$$
can be described as
the restriction of $[\Gamma_f-\{x_0\}\times D]\in\Ch^2(X\times D)$.

\smallskip

 Next consider fibrewise multiplication by $m$ for the family $q:\kc\to T$, which defines a morphism
$\mu_m:U\to U$ on some open set $U\subset\kc$. Denote the graphs of the two morphisms
$p:U\to X$ and  $p_m:=p\circ\mu_m:U\to X$
 by $\Gamma$ resp.\ $\Gamma_m$. We claim that,
possibly after shrinking $U$, there exists an $N_m>0$ such that 
$$N_m\cdot\left([\Gamma_m-\{x_0\}\times U]-m\cdot[\Gamma-\{x_0\}\times U]\right)=0\text{ in }\Ch^2(X\times U).$$
Indeed, the correspondence $[\Gamma_m]_*:\Ch^2(U)\to\Ch^2(X)$ maps
the class of a point $x\in\kc_t$ to $m\cdot[x]+(1-m)\cdot[x_t]$, where $x_t\in\kc_t$ is the origin of the elliptic curve
$\kc_t$. Since $[p(x_t)]=[x_0]$, one finds $[\Gamma_m-\{x_0\}\times U]_*=
m\cdot[\Gamma-\{x_0\}\times U]_*:\Ch^2(U)\to\Ch^2(X)$. Using Bloch--Srinivas \cite{BlochSrini},
one concludes the existence of $N_m$ (after shrinking $U$ if necessary).

\smallskip
 
Let now $D\subset \kc$ be as assumed. Fix $m$ and choose $N_m$ above such
that $n|N_m$. Then define $D(m^k):=\mu_{m^k}(D)=\mu_m^k(D)$ (or, more precisely,
$\overline{\mu_{m^k}(D\cap U)}$)
and $C(m^k):=p(D(m^k))=p_{m^k}(D)$. Since $\kappa_C\times k(\eta_D)$ is  the specialization
of $[\Gamma_p-\{x_0\}\times U]$ and similarly for all $C(m^k)$,
one finds
$$N_m\cdot\left(\kappa_{C(m^k)}\times k(\eta_D)-m\cdot \kappa_{C(m^{k-1})}\times k(\eta_D)\right)=0$$ in
$\Ch^2(X\times k(\eta_D))$. Since $n|N_m$ and, therefore, $N_m\cdot\kappa_C=0$,
one obtains by recursion that $N_m\cdot(\kappa_{C(m^k)}\times k(\eta_D))=0$ for all $k$.

The base change induced by $p_{m^k}:D\to C(m^k)$ factorizes
via
\begin{equation}\label{eqn:comp}
\Ch^2(X\times k(\eta_{C(m^k)}))\to\Ch^2(X\times k(\eta_{D(m^k)}))\to\Ch^2(X\times k(\eta_D)).
\end{equation}
Since $\deg(p:D(m^k)\to C(m^k))\leq \deg(p:\kc\to X)$ and $\deg(\mu_{m^k}:D\to D(m^k))\leq(D.\kc_t)$,
there exists an $N'>0$ independent of $m$ and $k$, such that the kernel of (\ref{eqn:comp}) is annihilated by $N'$.
This shows that the curves $C(m^k)$ are constant cycle curves of order $\leq N_m\cdot N'$. 

If the intersection of $D$ with the generic fibre $\kc_t$ contains a non-torsion point, then the
union $\bigcup D(m^k)\subset \kc$ of constant cycle curves of order $\leq N_m\cdot N'$
is dense in the Zariski topology and so is its image $\bigcup C(m^k)\subset X$.

To obtain density in the classical topology one needs to prove that the integers
$N_m$ can be chosen independent
of $m$.  Consider the map  
$\sigma:\kc\times_T\kc\to \kc\to X$ that is given as the composition
of the summation in the fibres $\kc_t$
with the projection  $p$. For points  $x,\,x'\in \kc$ in a smooth fibre $\kc_t$, we have
$[p(x+x')]=[p(x)]+[p(x')]-c_X$ in  $\Ch^2(X)$, as by assumption the class of the
image $p(x_t)$ of the origin  $x_t\in\kc_t$ is $c_X$.
Now consider the codimension two cycle 
$$\Gamma:=\Gamma_\sigma-\Gamma_{p\circ pr_1}-\Gamma_{p\circ pr_2}+
\{x_0\}\times\kc\times_T\kc$$
on $ X\times\kc\times_T\kc$. Then by
Bloch--Srinivas \cite{BlochSrini}, there exists an integer $N$ such that
the  $N\cdot[\Gamma]=0$ in $\Ch^2(X\times V)$ for some non-empty
open subset $V\subset \kc\times_T\kc$.  

Restriction to the  diagonal in $\kc\times_T\kc$
yields $$N\cdot\left([\Gamma_2] -2\cdot[\Gamma]+ [\{x_0\}\times W]\right)=0$$
in $\Ch^2(X\times W)$ for a certain non-empty open set
$W\subset \kc$ (the restriction of $V$ with the diagonal). By  induction on $m$ and by restricting the cycle
$\Gamma$ to the surface that is given as the image
of  ${\rm id}\times\mu_{m-1}:\kc\to\kc\times_T\kc$, $x\mapsto (x,(m-1)\cdot x)$,
one proves similarly
$$N\cdot\left([\Gamma_m]-m\cdot[\Gamma]+(m-1)\cdot[\{x_0\}\times W]\right)=0$$
in $\Ch^2(X\times W)$.

As before, this shows that all curves $C(m)$ are constant cycle curves of order
$\leq N\cdot N'$ (independent of $m$). If the generic fibre $\kc_t$ is viewed as a torus
$\RR^2/\ZZ^2$, then the intersection of $\bigcup D(m)$ with $\kc_t$ contains a
set of the form $\ZZ\cdot (x_1,x_2)\subset \RR^2/\ZZ^2$ with $x_1$ or $x_2$ non-torsion.
Therefore, the closure of $\kc_t\cap\bigcup D(m)$
is either the full torus $\kc_t=\RR^2/\ZZ^2$ or a finite union
 of translates of circles ${\mathbb S}^1\subset \kc_t=\RR^2/\ZZ^2$.

The  second possibility is ruled out, since the monodromy action
on $H_1(\kc_t,\ZZ)$ would then act via a finite group on the class of the
finite union of circles contradicting the fact that the monodromy
group is of finite index in ${\rm Sl}(2,\ZZ)$.
The argument can be made more explicit as follows. If the intersection is generically not dense,
then the components of its closure satisfy a linear equation of the form
$a\cdot x_1+b\cdot x_2=0$ with $a,b\in \ZZ$ not both zero. If assumed coprime, $a$ and $b$ are unique
up to sign. Such an equation exists for a countable union of real analytic subsets of $D$ over the
regular locus of $q$.  Thus, $(a,b)$ is locally constant up to sign and defines a section of the pull-back
of $R^1q_*\ZZ$ to a dense open subset of $D$ (or rather the appropriate double cover that accommodates
for the sign ambiguity). The latter contradicts the non-isotriviality of $\kc\times_TD\to D$.

The last part of the proof is inspired by a similar argument due to Chen and Lewis in \cite{CL}.
\end{proof}

\subsection{}
Let  $\kc_n\subset\kc$ be the closure of
the set of all $n$-torsion points $x\in\kc_t$ in smooth fibres $\kc_t$ and let $C_n:=p(\kc_n)$. Then
by Section \ref{sec:BVexas} the curve $C_n$ is a (possibly reducible) constant cycle curve of order $d|n$. Of course,
the curves $\kc_n$ cannot be used as input for Lemma \ref{lem:transl}, but the curve
$\widetilde \kc_n:= p^{-1}(p(\kc_n))$ might contain another component $D$ satisfying the assumptions i) (which is
automatic) and ii) of Lemma \ref{lem:transl}.
We will argue by contradiction and assume:

\smallskip

$(\ast)$ If $D\subset p^{-1}(p(\kc_n))=p^{-1}(C_n)$ is an irreducible component, then its intersection
with the generic fibres $\kc_t$ contains only torsion points.

\smallskip

Consider a multi-section $D\subset \kc$. We call $D$ \emph{flat} at a point  $x\in D$ if
locally analytically $D$ can be lifted to a {flat} section of $H^1_\CC:=R^1q_*\CC$.
E.g.\ the multi-sections $\kc_n$  are flat in all points of intersection with smooth fibres. Note that the notion makes sense for locally analytic sections as well.

\begin{lem}\label{lem:isotrivial}
Suppose $D\subset\kc$ is an irreducible multi-section of $q:\kc\to T$ and assume it
is flat locally around a point $x\in D$ which is a non-torsion point
in a smooth fibre $\kc_t$. Then $q:\kc\to T$ is isotrivial (contradicting the assumption i) in \ref{sec:notation}).
\end{lem}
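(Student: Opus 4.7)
The plan is to translate the flatness hypothesis into a Hodge-theoretic condition and then to derive isotriviality by combining this with the algebraicity of $D$.

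On a simply-connected neighborhood $U$ of $t$ in the smooth locus $T^\circ$ of $q$, choose an integral basis of $R^1q_*\ZZ|_U$ to flat-trivialize $R^1q_*\CC|_U \cong H_\CC \times U$ with $H_\CC \cong \CC^2$. By hypothesis, the local branch $s \colon U \to \kc$ of $D$ through $x$ lifts to a flat (hence constant) section $\sigma \in H_\CC$, so that under the identification $\kc_u \cong H_\CC/(F^1_u + H_\ZZ)$ we have $s(u) \equiv \sigma \bmod (F^1_u + H_\ZZ)$ for all $u \in U$. Writing $\sigma = (c_1, c_2)$ in the chosen basis, the non-torsion assumption on $x = s(t)$ is equivalent to $(c_1, c_2) \notin \QQ^2$. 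Since $D \to T$ is algebraic of some finite degree $d$, analytic continuation of $s$ along loops in $T^\circ$ can only produce the at most $d$ branches of $D$, so the Gauss--Manin monodromy orbit of $\sigma$ in $H_\CC/H_\ZZ$ under the monodromy group $\Gamma \subset \mathrm{GL}(H_\ZZ)$ is finite; equivalently, $\sigma$ is fixed modulo $H_\ZZ$ by a finite-index subgroup $\Gamma' \subset \Gamma$.

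This monodromy condition is already restrictive: for every $M \in \Gamma'$ we have $(M - I)\sigma \in H_\ZZ$, so if $\det(M - I) \neq 0$ then $\sigma \in (M-I)^{-1} H_\ZZ \subset H_\QQ$, contradicting non-torsion. Hence every element of $\Gamma'$ must be parabolic, so $\Gamma'$ is virtually unipotent and fixes a common rational line $\ell \subset H_\CC$ defined over $\QQ$, with $\sigma \in \ell \smallsetminus (\ell \cap H_\QQ)$. The line $\ell$ extends to a flat rank-one sub-local-system of $R^1q_*\CC|_{T^\circ}$ defined over $\QQ$.

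Finally, one analyzes the compatibility of $\ell$ with the Hodge filtration $F^1 \subset H_\CC$. If $\ell \subset F^1_u$ on a dense open subset, then $\ell = F^1$ as line bundles (same rank), so $F^1$ is flat, the Kodaira--Spencer map vanishes and the family is isotrivial. Otherwise $\ell$ is generically transverse to $F^1$, giving a flat algebraic splitting $H_\CC = \ell \oplus F^1$ on a dense open subset of $T^\circ$; then the constant $\sigma \in \ell$ maps holomorphically to a nowhere-vanishing section $\bar\sigma$ of the line bundle $H_\CC/F^1$, and combining this with the identity $s(u) = \bar\sigma(u) \bmod (\text{projection of } H_\ZZ)$, the algebraicity of $s$ (as a branch of $D$), and the transcendence of the period map $\tau \colon \widetilde{T^\circ} \to \mathbf{H}$ against the algebraicity of $\wp_{\tau(u)}(c_1 + c_2 \tau(u))$ in $u$, one forces $\tau$ to be locally constant, and the family to be isotrivial. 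The main obstacle is precisely this transverse case: either one makes the transcendence argument rigorous (invoking elliptic Ax--Schanuel type results), or one carries through a direct VHS-theoretic proof that a non-torsion flat algebraic section is incompatible with a non-vanishing Kodaira--Spencer map.
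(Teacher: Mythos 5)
Your argument begins the same way as the paper's: the flat lift $\sigma$ of the branch of $D$ has finite orbit in $H_\CC/H_\ZZ$ (the paper achieves this by base-changing $q:\kc\to T$ along $D\to T$, so that the multi-section becomes an honest flat section of $q_D$), hence is fixed modulo $H_\ZZ$ by a finite-index subgroup $\Gamma'$ of the monodromy group; and your observation that a single $M\in\Gamma'$ with $\det(M-I)\neq0$ forces $\sigma\in H_\QQ$, i.e.\ torsion, is exactly the right mechanism. The problem is how you dispose of the remaining case. You conclude that otherwise every element of $\Gamma'$ is parabolic, and then try to rule this out by a Hodge-theoretic/transcendence argument involving the invariant rational line $\ell$, the Kodaira--Spencer map, and an elliptic Ax--Schanuel-type statement --- and you yourself flag that you cannot complete the transverse case. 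As written, the proof is therefore incomplete.

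The missing input is the standard fact on which the paper's one-line conclusion rests: for a non-isotrivial elliptic surface the monodromy group $\Gamma$ is a finite-index subgroup of ${\rm Sl}(2,\ZZ)$, hence so is $\Gamma'$, and a finite-index subgroup of ${\rm Sl}(2,\ZZ)$ cannot consist solely of parabolic elements and $\pm I$ --- such a subgroup would fix a cusp and be virtually cyclic, hence of infinite index. Equivalently, $(\CC^2/\QQ^2)^{\Gamma'}=0$, which is the form in which the paper states it. With this fact your case distinction collapses: the all-parabolic case simply cannot occur under the non-isotriviality hypothesis, so $\sigma\in\QQ^2$, the section is torsion, and you have your contradiction. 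In short, the skeleton of your argument matches the paper's, but there is a genuine hole exactly where you admit one, and it is filled not by transcendence theory but by the group-theoretic consequence of non-isotriviality that the lemma's hypothesis already hands you.
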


\begin{proof}
The assumption immediately implies that $D$ is flat in all points of intersection with
smooth fibres. Consider the base change $q_D:\kc\times_TD\to D$. In addition to the pull-back of the
zero-section,
it comes with a natural flat section which is fibrewise non-torsion. 
This flat section of $q_D$ defines a section of $R^1q_{D*}\CC/R^1q_{D*}\ZZ$ (over the regular part)
 and, therefore, corresponds
to a monodromy invariant element of $\CC^2/\QQ^2$. But
for a non-isotrivial family the monodromy group is a finite index subgroup $\Gamma\subset{\rm Sl}(2,\ZZ)$
and thus $(\CC^2/\QQ^2)^\Gamma=0$. \end{proof}

Let now $x,x'\in\kc$ be generic points with $p(x)=p(x')$ and $x\in D\subset \kc$ a local multi-section
through $x$. Consider a component $D'\subset p^{-1}(p(D))$ through $x'$. A priori, $D$ might be
flat at $x$ without $D'$ being flat at $x'$. 
However, if $x$ is $n$-torsion in the fibre $\kc_t$, $t=q(x)$, then $x\in\kc_n$ and hence $x'\in\widetilde\kc_n$. So assuming $(\ast)$, the components of $\widetilde\kc_n$ containing $x'$ are again contained in some $\kc_m$
and are, therefore,  flat multi-sections of $q:\kc\to T$. Using a density argument this is enough to conclude the general case:

\begin{lem}\label{lem:flatalways}
Let $x,x'\in\kc$ be generic points with $p(x)=p(x')$. Assume $x\in D\subset\kc$ is a 
local analytic section through $x$ and $D'\subset p^{-1}(p(D))$ is a component through $x'$.
Under the assumption $(\ast)$ one has: If $D$ is flat at $x$, then $D'$ is flat at $x'$.
\end{lem}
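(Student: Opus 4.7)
The plan is to prove the lemma by first treating the case when $D$ is a torsion section, where $(\ast)$ applies directly, and then extending to arbitrary flat $D$ by a density argument rooted in the complex analyticity of flatness in the natural period parameters. When $D$ is (a component of) some $\kc_n$, the image $p(D)$ sits inside $C_n$, so any component $D'$ of $p^{-1}(p(D))$ through $x'$ lies in a component of $\widetilde\kc_n = p^{-1}(C_n)$. By $(\ast)$, that component meets the smooth fibres of $q$ only in torsion points; being irreducible, it is covered by the countable nested union of the closed subsets $\kc_m$, and hence equal to one of them, so the component lies in some $\kc_m$ and is therefore flat at $x'$.

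For general flat $D$ through a generic $x$, the genericity of $(x,x')$ combined with hypothesis (ii) of Section~\ref{sec:notation} forces $p$ to be \'etale at both $x$ and $x'$ and the two fibre tangents to map to distinct lines in $T_{p(x)}X$. One then has a local biholomorphism $\Psi : U \to U'$ satisfying $p|_{U'}\circ\Psi = p|_U$, $\Psi(x) = x'$ and $D' = \Psi(D)$, with $\Psi_*$ carrying the fibre direction at $x$ to a direction transverse to the fibre direction at $x'$. Fixing a flat trivialization of $R^1 q_*\CC$ over a simply connected neighborhood of $q(x)$, the flat local sections of $\kc$ through points close to $x$ form a complex-analytic family $(A,B) \mapsto s_{(A,B)}$ parameterized by $\CC^2/\ZZ^2$. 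Via the identification $\CC^2/\ZZ^2 \cong (\CC^*)^2$ (exponentiation of the period coordinates), the torsion subset is $\QQ^2/\ZZ^2 \cong \mu_\infty^2$, which is \emph{Zariski-dense} in $(\CC^*)^2$. Outside a codimension-one subset of parameters --- the locus where $\Psi_*$ would carry the tangent of $s_{(A,B)}$ into the fibre direction at $x'$ --- the image $\Psi(s_{(A,B)})$ is itself a local section of $q$ near $x'$, whose candidate flat parameters $(A'(A,B),B'(A,B))$, recovered holomorphically from its values at two fixed reference base points, depend complex-analytically on $(A,B)$.

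The condition that $\Psi(s_{(A,B)})$ actually coincide with the flat section $s'_{(A'(A,B),B'(A,B))}$ --- equivalently, that $\Psi(s_{(A,B)})$ itself be flat at $x'$ --- cuts out a complex-analytic subvariety $\mathcal{F} \subset (\CC^*)^2$, obtained by equating the higher Taylor coefficients of the two sides. By the torsion case just treated, $\mathcal{F}$ contains $\mu_\infty^2$; since $\mu_\infty$ is not contained in any proper analytic subvariety of $\CC^*$, a standard slicing argument shows that any analytic subvariety of $(\CC^*)^2$ containing $\mu_\infty^2$ must be the whole space. Hence $\mathcal{F} = (\CC^*)^2$ and $D'$ is flat at $x'$ as required. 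The main subtlety will be articulating the flatness condition as genuinely holomorphic equations in $(A,B)$; this becomes possible thanks to the transversality in hypothesis (ii), which guarantees that $\Psi(s_{(A,B)})$ remains a local section of $q$ outside a negligible codimension-one set in parameter space, allowing its flat parameters $(A',B')$ to be defined unambiguously and holomorphically.
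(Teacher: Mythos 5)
Your proposal is correct and follows essentially the same route as the paper: first the torsion case, where $(\ast)$ forces the component through $x'$ to lie in some $\kc_m$ and hence be flat, and then an analytic-density argument using that flat local sections are parametrized by $\Gamma(\Delta,R^1q_*\CC/R^1q_*\ZZ)\cong\CC^2/\ZZ^2$, in which the torsion sections are analytically dense while flatness of $D'$ is an analytically closed condition on the parameter. Your extra care in exhibiting the flatness of $\Psi(s_{(A,B)})$ as genuinely holomorphic equations (via the local biholomorphism $\Psi$ and Taylor coefficients) only makes explicit what the paper leaves implicit.
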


\begin{proof}
Locally, flat sections are given by constant sections of $H^1_\CC|_\Delta=\CC^2\times\Delta$,
where $\Delta\subset T$ is a disk around a generic point of $T$.
For a torsion section $D$, which corresponds to a section contained in $\QQ^2\times\Delta$, assumption $(\ast)$
implies that $D'$ is again torsion and hence flat. In order to prove the assertion, it is enough to
prove that this holds true on an analytically dense set of flat local sections. However,
since $\kc\to T$ is not isotrivial, $R^0q_*(\Omega_{\kc/T})|_\Delta\subset H_\CC^1|_\Delta\otimes\ko$ does not admit
flat sections
and, therefore, $\Gamma(\Delta,R^1q_*\CC/R^1q_*\ZZ)$ can be identified with the set of flat
sections of $\kc|_\Delta\to\Delta$. Under this identification, the subset of sections contained
in $R^1q_*\QQ/R^1q_*\ZZ$ is analytically dense in   $\Gamma(\Delta,R^1q_*\CC/R^1q_*\ZZ)$.
\end{proof}

\noindent
{\it End of proof of Theorem \ref{thm:app}.} Choose generic points $x,x'\in\kc$ with $p(x)=p(x')$ and assume $x\in\kc_t$,
$t=q(x)$, is not torsion. Consider the two curves $E:=p(\kc_t)$ and $E':=p(\kc_{t'})$, $t'=q(x)$, in $X$.
Then we can assume that the two curves $E$ and $E'$ intersect
transversally in $y:=p(x)=p(x')\in E\cap E'$.
Now choose a flat local analytic section $x\in D\subset\kc$ of $q:\kc\to T$ such that 
$p(D)$ is tangent to $E'$ at $y$.   (This can always be achieved over points $t\in T$ with maximal
VHS. Indeed a local calculation shows that 
every non-vertical tangent direction $v\in T_x\kc$ in a point $x\in\kc$ over $t$
can be integrated to a flat local
section.) Then the component $D'\subset p^{-1}(p(D))$ through $x'$ is tangent to the fibre $\kc_{t'}$, $t'=q(x')$. Since by Lemma \ref{lem:flatalways}, $D'$ is also flat, in fact $D'\subset\kc_{t'}$.

But then $p^{-1}(E')$ contains $D$ and hence $D$ can be extended to an algebraic
multi-section of $\kc\to T$ through $x\in\kc$ which, moreover, is flat locally around $x$ and non-torsion.
Then, by Lemma \ref{lem:isotrivial}, $\kc\to T$ is isotrivial, which is excluded by assumption on $\kc\to T$.\qed




\begin{thebibliography}{PWRK09}

\bibitem[An05]{Andre} Y.\ Andr\'e
\emph{Motifs de dimension finie (d'apr\`es S.-I. Kimura, P. O'Sullivan ...)},
 S\'eminaire Bourbaki. Vol. 2003/2004. Ast\'erisque No. 299 (2005), Exp. No. 929, viii, 115--145.

\bibitem[AS08]{AS} M.\ Artebani, A.\ Sarti
\emph{Non symplectic automorphisms of order $3$ on K3 surfaces},
Math.\ Ann.\ 342 (2008),  903--921.

\bibitem[AST11]{AST} M.\ Artebani, A.\ Sarti, S.\ Taki
\emph{K3 surfaces with non-symplectic automorphisms of prime order}, 
Math.\ Z. 268 (2011),  507--533.

\bibitem[BV04]{BV} A.\ Beauville, C.\ Voisin
\emph{On the Chow ring of a K3 surface},
J.\ Alg.\ Geom.\  13 (2004), 417--426.

\bibitem[Bl80]{Bloch} S.\ Bloch 
\emph{Lectures on algebraic cycles},
Duke Univ.\ Math.\  Series, IV. (1980). 

\bibitem[BKL76]{BKL} S.\ Bloch, A.\ Kas, D.\ Lieberman
\emph{Zero cycles on surfaces with $p_g=0$},
Comp.\ Math.\  33 (1976), 135--145.

\bibitem[BS83]{BlochSrini} S.\ Bloch, V.\ Srinivas
\emph{Remarks on correspondences and algebraic cycles},
 Amer.\ J.\ Math.\ 105 (1983),
1235--1253.

\bibitem[BHT11]{BHT} F.\ Bogomolov, B.\ Hassett, Y.\ Tschinkel
\emph{Constructing rational curves on K3 surfaces}, 
Duke Math.\ J.\ 157 (2011),  535--550. 

\bibitem[BT00]{BT} F.\ Bogomolov, Y.\ Tschinkel
\emph{Density of rational points on elliptic K3 surfaces},
Asian J.\ Math.\ 4 (2000), 351--368.

\bibitem[BT05]{BT2} F.\ Bogomolov, Y.\ Tschinkel
\emph{Rational curves and points on {K}3 surfaces},
Amer.\ J.\ Math.\ 127 (2005), 825--835.

\bibitem[BP09]{BP} P.\ Brosnan, G.\ Pearlstein 
\emph{Zero loci of admissible normal functions with torsion singularities},
Duke Math.\ J. \ 150 (2009), 77--100. 

\bibitem[Cha12]{Charles} F.\ Charles
\emph{The Tate conjecture for K3 surfaces over finite fields},
 arXiv:1206.4002. Invent.\ math. to appear.

\bibitem[Cha13]{CharlesBourb} F.\ Charles
\emph{Progr\`es r\'ecents sur les fonctions normales (d'apr\`es Green--Griffiths, Brosnan--Pearlstein, M. Saito, Schnell...)},  arXiv:1301.7235. S\'eminaire Bourbaki (2013).
 
\bibitem[Ch02]{Chen} X.\ Chen
\emph{A simple proof that rational curves on K3 are nodal},
Math.\ Ann.\ 324 (2002), 71--104.

\bibitem[CL13]{CL} X.\ Chen, J.\ Lewis
\emph{Density of rational curves on K3 surfaces},
 Math.\ Ann.\ 356 (2013), 331--354.

\bibitem[Di09]{Dillies} J.\ Dillies
\emph{Order 6 non-symplectic automorphisms of K3 surfaces},
arXiv:0912.5228.

\bibitem[Fu98]{Fulton} W.\ Fulton
\emph{Intersection theory},
volume 2 of Ergebnisse der Mathematik und ihrer Grenzgebiete. 3.
Folge. Springer-Verlag, Berlin, (1998).

\bibitem[GS10]{GS} A.\ Garbagnati, A.\ Sarti
\emph{On symplectic and non--symplectic automorphisms of K3 surfaces},
	arXiv:1006.1604.

\bibitem[Go12]{Gorchinsky}
S.\ Gorchinskiy
\emph{Generation of modules and transcendence degree of zero-cycles},
 arXiv:1210.0233.

 \bibitem[Gr98]{Green}
 M.\ Green
 \emph{Higher Abel--Jacobi maps},
 Proc.\ Int.\ Congr.\ Math.\ Vol. II (Berlin, 1998), 267--276.
 
\bibitem[GGP04]{GGP} M.\ Green, P.\ Griffiths, K.\ Paranjape
\emph{Cycles over fields of transcendence degree 1}, 
Michigan Math.\ J.\ 52 (2004), 181--187. 

\bibitem[GG05]{GGBook} M.\ Green, P.\ Griffiths
\emph{On the tangent space to the space of algebraic cycles on a smooth algebraic variety},
Annals of Mathematics Studies, 157. Princeton University Press (2005).

\bibitem[GG03]{GG} M.\ Green, P.\ Griffiths 
\emph{An interesting $0$-cycle},
Duke J.\ Math.\ 119 (2003), 261--313.

\bibitem[GP03]{GP}
V.\ Guletskii, C.\ Pedrini
\emph{Finite dimensional motives and the conjecture of  Beilinson and Murre},
K-theory 550 (2003), 1--21.

\bibitem[HT00]{HassTsch}
B.\ Hassett, Y.\  Tschinkel
\emph{Abelian fibrations and rational points on symmetric products},
Internat.\ J.\ Math.\ 11 (2000), 1163--1176.

\bibitem[Hu13]{HK3} D.\ Huybrechts
\emph{Lectures on K3 surfaces},
\url{http://www.math.uni-bonn.de/people/huybrech/K3.html}

\bibitem[Ja07]{Jannsen} U.\ Jannsen \emph{On finite-dimensional motives and Murre's conjecture},
in `Algebraic cycles and Motives'  Vol. 2, LMS Lecture Note Ser., vol. 344, Cambridge Univ. Press (2007),  112--142.
  
\bibitem[Ka03]{Kahn} B.\ Kahn
\emph{\'Equivalences rationnelle et num\'erique sur certaines vari\'et\'es de type ab\'elien sur un corps fini},
Ann.\ Sci.\ \'Ec.\ Norm.\ Sup.\ 36 (2003), 977--1002.

\bibitem[KMP07]{KMP} B.\ Kahn, J.\ Murre and C.\ Pedrini 
\emph{On the transcendental part of the motive of a surface},
 in `Algebraic cycles and Motives'  Vol. 2, LMS Lecture Note Ser., vol. 344, Cambridge Univ. Press (2007), 
 143--202.

\bibitem[Ke06]{Kerr2}
 M.\ Kerr
 \emph{A survey of transcendental methods in the study of Chow groups of $0$-cycles},
 in `Mirror Symmetry V' (Lewis, Yui and Yau, eds.), AMS/IP Stud.\ Adv.\ Math.\ 38 (2006), 295--350.
 
\bibitem[Ke08]{Kerr}
M.\  Kerr
\emph{Higher Abel--Jacobi maps for 0-cycles}, J.\ K-Theory 2 (2008), 41--101.
  
\bibitem[Ki05]{Kimura} S.\ Kimura \emph{Chow groups are finite dimensional, in some sense},
Math.\ Ann.\ 331 (2005),  173--201. 

\bibitem[KMPS10]{KMPS} A.\ Klemm, D.\ Maulik, R.\ Pandharipande, E.\ Scheidegger
\emph{Noether--Lefschetz theory and the Yau--Zaslow conjecture},
J.\ Amer.\ Math.\ Soc.\ 23 (2010),  1013--1040. 

\bibitem[Le86]{Lecomte} F.\ Lecomte
\emph{Rigidit\'e des groupes de Chow},
Duke Math.\ J.\ 53 (1986), 405--426. 

\bibitem[LL12]{LL} J.\ Li, C. Liedtke
\emph{Rational curves on K3 surfaces},
Invent.\ Math.\ 188 (2012),  713--727. 

\bibitem[Li13]{Liedtke} C.\ Liedtke,
\emph{Supersingular K3 surfaces are unirational},
arXiv:1304.5623.

\bibitem[Ma04]{ML} C.\ Maclean
\emph{Chow groups of surfaces with $h\sp {2,0}\leq1$}, C.\ R.\
Math.\ Acad.\ Sci.\ Paris 338 (2004), 55--58.

\bibitem[Ma13]{Keerthi} K.\ Madapusi Pera
\emph{The Tate conjecture for K3 surfaces in odd characteristic},
arXiv:1301.6326.

\bibitem[Mau12]{Maulik} D.\ Maulik
\emph{Supersingular K3 surfaces for large primes},  arXiv:1203.2889.

\bibitem[Mi82]{MilneRoit} J.\ Milne
\emph{Zero cycles on algebraic varieties in nonzero characteristic: Rojtman's theorem},
Comp.\ Math.\ 47 (1982), 271--287. 

\bibitem[Mi07]{Milne} J.\ Milne \emph{The Tate conjecture over finite fields},
 arXiv:0709.3040.

\bibitem[MM83]{MM} S.\ Mori, S.\ Mukai
\emph{The uniruledness of the moduli space of curves of genus 11},
Alg.\  geometry (Tokyo/Kyoto, 1982), 334--353, 
Lecture Notes in Math., 1016, Springer, Berlin, (1983). 

\bibitem[Mu68]{Mumford}
D.\ Mumford \emph{Rational equivalence of $0$-cycles on surfaces},
 J.\ Math.\ Kyoto Univ. 9 (1968) 195--204.

\bibitem[OG10]{OG2} K.\ O'Grady
\emph{Double covers of EPW-sextics},
 arXiv:1007.4952v3.
 
\bibitem[OG13]{OG} K.\ O'Grady
\emph{Moduli of sheaves and the Chow group of K3 surfaces},
arXiv:1205.4119. J.\ Math.\ Pures Appl.\ (2013). to appear.  

\bibitem[Pe12]{Pedrini} C.\ Pedrini
\emph{On the finite dimensionality of a K3 surface}, 
Manuscripta Math. 138 (2012),  59--72. 

\bibitem[Ro80]{Roitman} A.\ Rojtman
\emph{The torsion of the group of 0-cycles modulo rational equivalence},
 Ann.\ Math.\ (2) 111 (1980), 553--569. 

\bibitem[Sa08]{Saito} M.\ Saito
\emph{Hausdorff property of the N\'eron models of Green, Griffiths and Kerr},
arXiv:0803.2771v4.
	
\bibitem[Sch87]{Schoen} C.\ Schoen
\emph{Zero cycles modulo rational equivalence for some varieties over fields of transcendence degree one},
Algebraic geometry, Bowdoin, 1985 (Brunswick, Maine, 1985),  
Proc.\ Sympos.\ Pure Math., 46, Part 2, (1987), 463--473. 	
	
\bibitem[Ta94]{Tate} J.\ Tate
\emph{Conjectures on algebraic cycles in $\ell$-adic cohomology},
Motives (Seattle, WA, 1991),  
Proc.\ Sympos.\ Pure Math.\ 55, Part 1, AMS (1994), 71--83.

\bibitem[Ti80]{Tikho} A.\ Tikhomirov \emph{Geometry of the Fano surface of a double $\PP^3$ branched in a quartic},
 Izv.\ Akad.\ Nauk SSSR Ser.\ Mat. 44 (1980),  415--442.

\bibitem[Vo96]{VoisinRemarks} C.\ Voisin
\emph{Remarks on zero-cycles of self-products of varieties}, Moduli of vector bundles (Sanda, 1994; Kyoto, 1994), 
Lecture Notes in Pure and Appl. Math.\ 179, Dekker, New York, (1996), 265--285.

\bibitem[Vo99]{VoisinHigher} C.\ Voisin
\emph{Some results on Green's higher Abel-Jacobi map}, 
Ann.\ of Math.\ 149 (1999), 451--473. 

\bibitem[Vo02]{VoisinHodge} C.\ Voisin \emph{Th\'eorie de Hodge et g\'eom\'etrie
alg\'ebrique complexe}, Cours sp\'ecialis\'es 10. SMF (2002).

\bibitem[Vo12a]{VoisinCatanese} C.\ Voisin
\emph{Bloch's conjecture for Catanese and Barlow surfaces},
 arXiv:1210.3935.

\bibitem[Vo12b]{VoisinHOG} C.\ Voisin
\emph{Rational equivalence of 0-cycles on $K3$ surfaces and conjectures of Huybrechts and O'Grady},
arXiv:1208.0916.

\bibitem[Vo12c]{VoisinSympl} C.\ Voisin
\emph{Symplectic involutions of K3 surfaces act trivially on $CH_0$},
Documenta Math.\ 17 (2012) 851--860.

\bibitem[We81]{We} G.\ Welters \emph{Abel--Jacobi isogenies for certain types
of Fano threefolds}, Mathematical Centre Tracts 141 (1981).

\bibitem[Yi13]{Yin} Q.\ Yin
\emph{On a result of Green and Griffiths},
 arXiv:1302.6378.

\end{thebibliography}
\end{document}